\title{Invariance of three-dimensional Bessel bridges in terms of time reversal}
\author{Yuu Hariya\thanks{Supported in part by JSPS KAKENHI Grant Number  22K03330}}
\date{\empty}
\numberwithin{equation}{section}
\theoremstyle{plain}
\newtheorem{thm}{Theorem}[section]
\newtheorem{lem}[thm]{Lemma}
\newtheorem{prop}[thm]{Proposition}
\newtheorem{cor}[thm]{Corollary}
\theoremstyle{definition}
\theoremstyle{remark}
\newtheorem{rem}[thm]{Remark}
\begin{document}

\newcommand\ND{\newcommand}
\newcommand\RD{\renewcommand}

\ND\N{\mathbb{N}}
\ND\R{\mathbb{R}}
\ND\C{\mathbb{C}}

\ND\F{\mathcal{F}}

\ND\kp{\kappa}

\ND\ind{\boldsymbol{1}}

\ND\al{\alpha }
\ND\la{\lambda }
\ND\La{\Lambda }
\ND\ve{\varepsilon}
\ND\Om{\Omega}

\ND\ga{\gamma}

\ND\lref[1]{Lemma~\ref{#1}}
\ND\tref[1]{Theorem~\ref{#1}}
\ND\pref[1]{Proposition~\ref{#1}}
\ND\sref[1]{Section~\ref{#1}}
\ND\ssref[1]{Subsection~\ref{#1}}
\ND\aref[1]{Appendix~\ref{#1}}
\ND\rref[1]{Remark~\ref{#1}} 
\ND\cref[1]{Corollary~\ref{#1}}
\ND\csref[1]{Corollaries~\ref{#1}}
\ND\eref[1]{Example~\ref{#1}}
\ND\fref[1]{Fig.\ {#1} }
\ND\lsref[1]{Lemmas~\ref{#1}}
\ND\tsref[1]{Theorems~\ref{#1}}
\ND\dref[1]{Definition~\ref{#1}}
\ND\psref[1]{Propositions~\ref{#1}}
\ND\rsref[1]{Remarks~\ref{#1}}
\ND\sssref[1]{Subsections~\ref{#1}}

\ND\pr{\mathbb{P}}
\ND\ex{\mathbb{E}}

\ND\Ga{\Gamma}

\ND\eqd{\stackrel{(d)}{=}}

\ND\cm{\mathcal{M}}

\ND\cmb{\overline{\cm}}

\ND\cp{\mathcal{P}}

\ND\cpb{\overline{\cp}}

\ND\rd{\mathring{\rho}}
\ND\Rd{\mathring{R}}
\ND\rphi{\mathring{\phi}}
\ND\sg{\sigma}
\ND\ctd[2]{C([0,#1];\mathbb{R}^{#2})}
\ND\ctx{\mathcal{T}^{c}_{x}}

\ND\bes{r}
\ND\pbes[2]{\mathbb{P}^{(3)}_{#1,#2}}
\ND\ebes[2]{\mathbb{E}^{(3)}_{#1,#2}}
\ND\pBES[1]{\mathbb{P}^{(3)}_{#1}}
\ND\eBES[1]{\mathbb{E}^{(3)}_{#1}}

\ND\bb{\beta}
\ND\pbb[2]{\mathbb{P}_{#1,#2}}
\ND\ebb[2]{\mathbb{E}_{#1,#2}}
\ND\bbd{\mathring{\beta}}

\ND\cn{\mathcal{N}}
\ND\cq{\mathcal{Q}}
\ND\cs{\mathcal{S}}

\ND\id{\mathrm{Id}}

\ND{\rmid}[1]{\mathrel{}\middle#1\mathrel{}}

\def\thefootnote{{}}

\maketitle 
\begin{abstract}
Given $a,b\ge 0$ and $t>0$, let $\rho =\{ \rho _{s}\} _{0\le s\le t}$ be a three-dimensional 
Bessel bridge from $a$ to $b$ over $[0,t]$. In this paper, based on a conditional identity in law 
between Brownian bridges stemming from Pitman's theorem, we show in particular 
that the process given by 
\begin{align*}
 \rho _{s}+\Bigl| b-a+
 \min _{0\le u\le s}\rho _{u}-\min _{s\le u\le t}\rho _{u}
 \Bigr| 
 -\Bigl| 
 \min _{0\le u\le s}\rho _{u}-\min _{s\le u\le t}\rho _{u}
 \Bigr| ,\quad 0\le s\le t,
\end{align*}
has the same law as the time reversal $\{ \rho _{t-s}\} _{0\le s\le t}$ of $\rho $. 
As an immediate application, letting $R=\{ R_{s}\} _{s\ge 0}$ be a three-dimensional 
Bessel process starting from $a$, we obtain the following time-reversal and 
time-inversion results on $R$: $\{ R_{t-s}\} _{0\le s\le t}$ is identical in law with 
the process given by 
\begin{align*}
 R_{s}+R_{t}-2\min _{s\le u\le t}R_{u},\quad 0\le s\le t,
\end{align*}
when $a=0$, and $\{ sR_{1/s}\} _{s>0}$ is identical in law with the process given by 
\begin{align*}
 R_{s}-2(1+s)\min _{0\le u\le s}\frac{R_{u}}{1+u}+a(1+s),\quad s>0,
\end{align*}
for every $a\ge 0$.
\footnote{Mathematical Institute, Tohoku University, Aoba-ku, Sendai 980-8578, Japan}
\footnote{E-mail: hariya@tohoku.ac.jp}
\footnote{{\itshape Keywords and Phrases}:~{Brownian motion}; {three-dimensional Bessel bridge}; {Pitman's transformation}}
\footnote{{\itshape MSC 2020 Subject Classifications}:~Primary~{60J65}; Secondary~{60J60}}
\end{abstract}

\section{Introduction and main results}\label{;intro}

Three-dimensional Bessel bridges are fundamental stochastic 
processes in probability theory. When their length is $1$, 
the special case in which the process is null at both ends of 
the time interval $[0,1]$ is referred to as a standard three-dimensional 
Bessel bridge, which is known to be identical in law with a normalized 
Brownian excursion \cite{by}. As is well known, 
a Brownian excursion is a key component in the excursion theory of 
Brownian motion, and deep studies have been conducted such as 
its path decomposition, its local time process, and so on; see 
\cite{bp, by, jeu, my, ry} and references therein. On the other hand, 
it seems that less is known in the general case of arbitrary starting 
and end points. An indication of subtlety in such a case is that, unlike 
a standard three-dimensional Bessel bridge, the process cannot be 
realized as the modulus of a three-dimensional Brownian bridge unless 
either starting or end point coincides with the origin \cite{yz}. 
We expect that identities in law obtained in this paper will be an asset 
to a better understanding of that subtle case. 

In the course of our exploration, Pitman's $2M-X$ theorem \cite{jwp} as well 
as Pitman's transformation play a significant role. Pitman's transformation 
and it generalizations have a connection with a path model called the 
Littlemann path model in representation theory and have been used in 
the study of eigenvalues of random matrices of a certain class; see, e.g., 
\cite{bbo, bj}. In recent studies of classical integrable systems such as 
Toda lattice and box-ball systems from probabilistic perspective, there 
is also a renewed interest in Pitman's and Pitman-type transformations 
\cite{ckst, cs1, cs2, cst1, cst2}; our results include a number of novel 
identities involving Pitman's transformation along with new findings on 
its property, which we also expect will provide a deeper insight on that 
important transformation.

Given $t>0$, let $C([0,t];\R )$ be the space of real-valued continuous functions over 
$[0,t]$. Unless otherwise specified, $t$ is fixed throughout the paper. For 
each $x\in \R $, define $\cm _{x}: C([0,t];\R )\to C([0,t];\R )$ by  
\begin{equation}\label{;cmx}
\begin{split}
 &\cm _{x}(\phi )(s)\\
 &:=\phi _{s}-\frac{\phi _{t}-x}{2}-\left| 
 \frac{\phi _{t}-x}{2}+\max _{0\le u\le s}\phi _{u}-\max _{s\le u\le t}\phi _{u}
 \right| 
 +\Bigl| 
 \max _{0\le u\le s}\phi _{u}-\max _{s\le u\le t}\phi _{u}
 \Bigr| 
\end{split}
\end{equation}
for $0\le s\le t$ and $\phi \in C([0,t];\R )$. We also set 
\begin{align*}
 \cmb _{x}(\phi ):=-\cm _{-x}(-\phi ),\quad \phi \in C([0,t];\R ),\ x\in \R ,
\end{align*}
that is, 
\begin{equation}\label{;cmbx}
\begin{split}
 &\cmb _{x}(\phi )(s)\\
 &:=\phi _{s}-\frac{\phi _{t}-x}{2}+\left| 
 \frac{\phi _{t}-x}{2}+\min _{0\le u\le s}\phi _{u}-\min _{s\le u\le t}\phi _{u}
 \right| 
 -\Bigl| 
 \min _{0\le u\le s}\phi _{u}-\min _{s\le u\le t}\phi _{u}
 \Bigr| 
\end{split}
\end{equation}
for $0\le s\le t$. Recall the well-known Pitman's 
transformation, for which we write $\cp $, on the space $C([0,\infty );\R )$ 
of real-valued continuous functions $\phi $ over $[0,\infty )$, defined by 
\begin{align}
 \cp (\phi )(s)&:=2\max _{0\le u\le s}\phi _{u}-\phi _{s},\quad s\ge 0, \label{;cp}
\intertext{with which we also define $\cpb (\phi ):=\cp (-\phi )$, namely} 
 \cpb (\phi )(s)&:=\phi _{s}-2\min _{0\le u\le s}\phi _{u},
 \quad s\ge 0. \label{;cpb}
\end{align}
We keep the same notation if we restrict $\cp $ and $\cpb $ to $C([0,t];\R )$. 
For every $\phi \in C([0,t];\R )$, we also write $\rphi $ for 
its time reversal:
\begin{align}\label{;ts}
 \rphi _{s}:=\phi _{t-s},\quad 0\le s\le t. 
\end{align}

Given $a,b\ge 0$, let $\rho =\{ \rho _{s}\} _{0\le s\le t}$ be a three-dimensional 
Bessel bridge from $a$ to $b$ over $[0,t]$; for the definition of Bessel bridges of 
positive dimension, see \cite[Chapter~XI, Section~3]{ry}. One of the main results of 
this paper is stated as follows: 
\begin{thm}\label{;tmain1}
For every $a,b\ge 0$, the three-dimensional process 
\begin{align*}
\left( 
\cmb _{b-a}(\rd -b)(s)+a,\,\cpb (\rd )(s)+b,\,\rd _{s}
\right) ,\quad 0\le s\le t,
\end{align*}
has the same law as 
\begin{align*}
\left( 
\rho _{s},\,\cpb (\rho )(s)+a,\,\cmb _{a-b}(\rho -a)(s)+b
\right) ,\quad 0\le s\le t.
\end{align*}
\end{thm}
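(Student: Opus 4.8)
The plan is to transport the entire statement to the level of Brownian bridges through Pitman's transformation and then to exploit a single reversal symmetry there. The starting point is the conditional form of Pitman's $2M-X$ theorem \cite{jwp}: if $B$ is a Brownian motion from $0$, then $\cpb(B)$ is a three-dimensional Bessel process and, conditionally on $\cpb(B)$, the running minimum $-\min_{0\le u\le s}B_u$ is uniform on $[0,\cpb(B)(s)]$; equivalently, $B$ is recovered from $R=\cpb(B)$ through the future infimum, $B_s=R_s-2\inf_{u\ge s}R_u$, up to this uniform randomisation. First I would record the bridge analogue: coupling a three-dimensional Bessel bridge $\rho$ from $a$ to $b$ with a Brownian bridge $\beta$ from $0$ to a suitable endpoint so that, after the shift by $a$, $\rho$ is the $\cpb$-image of $\beta$ while $\beta$ is read back from $\rho$ via its future minima, the endpoint conditioning being absorbed into the conditional uniform law. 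This is the \emph{conditional identity in law between Brownian bridges} alluded to in the abstract.

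Next I would identify the three coordinates of the theorem as explicit functionals under this coupling. The first coordinate is the Bessel bridge $\rho$ itself (orientation $a\to b$); the third, $\cmb_{a-b}(\rho-a)+b$, starts at $b$ and ends at $a$, so it should be a Bessel bridge of the opposite orientation $b\to a$; and the middle one, $\cpb(\rho)+a$, starts at $0$ and is built from the \emph{past} minima $\min_{0\le u\le s}\rho_u$, so it is a Brownian bridge in law. The third coordinate is the crux: I expect to show that it is exactly the Bessel bridge produced by first applying the reverse-and-reflect map $\beta\mapsto\{\,\beta_t-\beta_{t-s}\,\}$ to the underlying Brownian bridge and then re-applying $\cpb$. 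A direct expansion of $\cpb$ on the reverse-reflected path yields two running-extremum (reflection) terms, which I anticipate coincide with the two absolute-value terms in \eqref{;cmbx}, the parameter $x=a-b$ doing the bookkeeping of the endpoints.

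With these identifications in hand, the equality in law should follow from the reverse-and-reflect invariance of the Brownian bridge: for a Brownian bridge $\beta$ from $0$ to $c$, the process $\{\,c-\beta_{t-s}\,\}_{0\le s\le t}$ is again a Brownian bridge from $0$ to $c$. Since reverse-reflecting $\beta$ is an involution and, under the correspondence, amounts to time-reversing the Bessel bridge (so that $\rho$ becomes the bridge $b\to a$, which is $\rd$ in law), this single symmetry interchanges the two Bessel coordinates while sending the Brownian coordinate of the $\rho$-system to that of the $\rd$-system. Tracking the positions then matches the triples exactly as stated: in the first slot both processes run $a\to b$, in the third slot both run $b\to a$, and in the middle both are Brownian bridges, which is why the theorem pairs $(\cmb_{b-a}(\rd-b)+a,\ \cpb(\rd)+b,\ \rd)$ with $(\rho,\ \cpb(\rho)+a,\ \cmb_{a-b}(\rho-a)+b)$.

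The main obstacle is the bridge correspondence together with the reconciliation of the auxiliary randomisation. Because $\cpb$ is not injective, none of the coordinate identifications is a genuine pathwise identity on $\rho$ alone; each holds only once the conditional uniform law of the extrema is taken into account, which is precisely why the conclusion is an identity in law and not an almost-sure one. The delicate points are (i) to formulate the bridge version of Pitman's theorem with the correct endpoint conditioning, so that the future-infimum recovery returns the \emph{conditioned} Brownian bridge rather than the semi-infinite identity $\min_{u\le s}B_u=-\inf_{u\ge s}R_u$, and (ii) to verify that the conditional (uniform) law of the running extrema is itself invariant under the reverse-and-reflect map, so that the randomisations agree on the two sides. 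Once these are secured, the closed-form match between the reverse-reflect-then-$\cpb$ construction and $\cmb_x$ in \eqref{;cmbx} is a routine, if lengthy, manipulation of running maxima and minima.
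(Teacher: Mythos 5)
The foundation of your argument --- a bridge-level Pitman coupling in which a three-dimensional Bessel bridge from $a$ to $b$ is realised as the $\cpb$-image of a Brownian bridge --- does not exist, and this breaks every subsequent step. For a Brownian bridge $\beta $ from $0$ to $x$, the terminal value $\cpb (\beta )(t)=x-2\min _{0\le u\le t}\beta _{u}$ is random, so $\cpb (\beta )+a$ cannot be a bridge ending at a deterministic point $b$; and at the level of laws, the disintegration behind \pref{;disint} (see \eqref{;disintq3}) shows that conditioning Brownian motion on $\{ B_{t}=x\} $ turns the law of $\cp (B)$ into that of a three-dimensional Bessel process reweighted by a density proportional to $(2\cp (B)(t))^{-1}\ind _{\{ \cp (B)(t)\ge |x|\} }$ --- an endpoint-reweighted Bessel process, not a Bessel bridge. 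To manufacture a genuine Bessel bridge you would have to condition on the terminal value of $\cpb (\beta )$ as well, and after that extra conditioning the object on the Brownian side is no longer a Brownian bridge, so the reverse-and-reflect invariance you intend to exploit is not available for it. A separate, independently fatal error is your identification of the middle coordinate: $\cq (\rho )=\cpb (\rho )+a$ is never a Brownian bridge, since $\cq (\phi )(s)=\phi _{s}-2\min _{0\le u\le s}\phi _{u}+\phi _{0}\ge 0$ for \emph{every} path $\phi $, and its terminal value $a+b-2\min _{0\le u\le t}\rho _{u}$ is random; indeed, when $a=b=0$ one has $\cq (\rho )=\rho $ identically.

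The idea your outline is missing is that Bessel bridges enter the paper not through Pitman's theorem but through Doob conditioning: a Brownian bridge between positive endpoints conditioned to stay nonnegative is a three-dimensional Bessel bridge. The paper first proves the conditional identity between Brownian bridges (\tref{;tmain2}, restated as \tref{;tmain2d}); its proof requires two ingredients absent from your plan, namely the disintegration formula \pref{;disint} and the pathwise invariance $\cp (\cm _{x}(\phi ))=\cp (\phi )$ of \lref{;mxpl} --- the latter is precisely the ``routine, if lengthy, manipulation of running maxima and minima'' that you defer, and it is in fact the analytic core of the whole argument, proved in the Appendix by Laplace's method. Then, inserting the indicator of the event $\{ \min _{0\le u\le t}\bb _{u}\ge 0\} $ into \tref{;tmain2d} and normalising by the appropriate probabilities converts the Brownian-bridge identity into the Bessel-bridge identity of \tref{;genr}; finally, \tref{;tmain1} follows by choosing $c=b$ (which trivialises the conditioning), invoking the time-reversal property $\pbes{a}{b}\circ (\rd )^{-1}=\pbes{b}{a}$, and a weak-convergence argument for the degenerate case $ab=0$. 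The reverse-and-reflect symmetry of the Brownian bridge --- the only probabilistic mechanism in your proposal beyond Pitman's theorem --- is true but far too weak to carry this structure: without \lref{;mxpl} and without the nonnegativity conditioning, there is no route from Brownian bridges to the Bessel-bridge statement.
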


As will be seen in \sref{;prftmain1}, \tref{;tmain1} is obtained as 
a particular case of \tref{;genr}. We remark that, if we introduce 
two more transformations $\cn $ and $\cq $ defined respectively by 
\begin{align}
 \cn (\phi )(s):=\cmb _{\phi _{0}-\phi _{t}}(\phi -\phi _{0})(s)+\phi _{t}, 
 \quad &0\le s\le t,\ \phi \in \ctd{t}{},
 \label{;cn} \\
 \cq (\phi )(s):=\cpb (\phi )(s)+\phi _{0},\quad &s\ge 0,\ \phi \in C([0,\infty );\R ), \label{;cq}
\end{align}
then the assertion of the theorem is stated more concisely as
\begin{align}\label{;concise}
 \left( 
 \cn (\rd ),\,\cq (\rd ),\,\rd 
 \right) \eqd 
 \left( 
 \rho ,\,\cq (\rho ),\,\cn (\rho )
 \right) .
\end{align}
Here the equality stands for the identity in law; we also regard $\cq$ as 
a transformation on $\ctd{t}{}$. By the definition~\eqref{;cmbx} of the transformations 
$\cmb _{x}$, the transformation $\cn $ admits the expression 
\begin{align}\label{;cnexpr}
 \cn (\phi )(s)=
 \phi _{s}+\Bigl| \phi _{t}-\phi _{0}+
 \min _{0\le u\le s}\phi _{u}-\min _{s\le u\le t}\phi _{u}
 \Bigr| 
 -\Bigl| 
 \min _{0\le u\le s}\phi _{u}-\min _{s\le u\le t}\phi _{u}
 \Bigr| 
\end{align}
for $0\le s\le t$ and $\phi \in \ctd{t}{}$. Among other properties 
investigated in \ssref{;sspnq}, the transformation $\cn $ satisfies 
\begin{align*}
 \cn (\phi )(0)=\phi _{t} && \text{and} && \cn (\phi )(t)=\phi _{0}
\end{align*}
for all $\phi \in \ctd{t}{}$. The above property is consistent with 
the theorem in the sense that \eqref{;concise} entails 
\begin{align*}
 \pr \!\left( 
 \cn (\rd )(0)=a,\,\cn (\rd )(t)=b
 \right) =1.
\end{align*}
Furthermore, the theorem indicates that $\cn $ is an 
involution, preserves $\cq $, and commutes with the 
operation~\eqref{;ts} of time reversal:   
\begin{align*}
 \cn \circ \cn =\id , && \cq \circ \cn =\cq , && 
 \overbrace{\cn (\phi )}^{\circ }=\cn \bigl( \rphi \bigr) 
 \quad (\forall \phi \in \ctd{t}{}),
\end{align*}
which is indeed the case as a consequence of \lref{;mxpl}; see \ssref{;sspnq}. 
Here $\id $ is the identity map on $\ctd{t}{}$. 
When $a=b$, expression~\eqref{;cnexpr} entails that the theorem 
reduces to the well-known fact that $\rd \eqd \rho $ (see, e.g, 
\cite[Chapter~XI, Exercise~\thetag{3.7}]{ry}).

Let $R=\{ R_{s}\} _{s\ge 0}$ be a three-dimensional 
Bessel process starting from $a$. As an immediate consequence 
of \tref{;tmain1}, we have the next two \csref{;cor1} and \ref{;cor2}.

\begin{cor}\label{;cor1}
For every $a\ge 0$, it holds that 
\begin{align}\label{;cor1q1}
 \Bigl( 
 \cn (\Rd ),\,\cq \bigl( \Rd \bigr) ,\,\Rd 
 \Bigr) 
 \eqd \left( 
 R,\,\cq (R),\,\cn (R)
 \right) .
\end{align}
In particular, when $a=0$, 
\begin{equation}\label{;cor1q2}
\begin{split}
&\Bigl\{ 
\Bigl( 
R_{t-s}+R_{t}-2\min _{t-s\le u\le t}R_{u},\,R_{t-s}
\Bigr) 
\Bigr\} _{0\le s\le t}\\
&\eqd 
\Bigl\{ 
\Bigl( R_{s},\,
R_{s}+R_{t}-2\min _{s\le u\le t}R_{u}
\Bigr) 
\Bigr\} _{0\le s\le t}.
\end{split}
\end{equation}
\end{cor}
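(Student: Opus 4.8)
The plan is to derive \eqref{;cor1q1} from \tref{;tmain1} by disintegrating the law of the three-dimensional Bessel process $R$ over its terminal value $R_{t}$, and then to read off \eqref{;cor1q2} as the first-and-third-coordinate projection of \eqref{;cor1q1} in the case $a=0$.

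For \eqref{;cor1q1}, I would first recall the standard fact that the regular conditional law of $\{ R_{s}\} _{0\le s\le t}$ given $R_{t}=b$ is the law of a three-dimensional Bessel bridge $\rho $ from $a$ to $b$ over $[0,t]$. Writing $\mu $ for the law of $R_{t}$, this gives the disintegration of $\{ R_{s}\} _{0\le s\le t}$ into such bridges weighted by $\mu (db)$. Both triples appearing in \eqref{;cor1q1} are continuous functionals of $\{ R_{s}\} _{0\le s\le t}$, so it suffices to identify their conditional laws given $R_{t}=b$ for $\mu $-a.e.\ $b$ and integrate against $\mu $. Conditioning on $R_{t}=b$ replaces $R$ by $\rho $, so the right-hand side $(R,\cq (R),\cn (R))$ has conditional law equal to that of $(\rho ,\cq (\rho ),\cn (\rho ))$. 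The same conditioning replaces the time reversal $\Rd $ by the time reversal $\rd $ of $\rho $, which is a Bessel bridge from $b$ to $a$; hence the left-hand side $(\cn (\Rd ),\cq (\Rd ),\Rd )$ has conditional law equal to that of $(\cn (\rd ),\cq (\rd ),\rd )$. By \tref{;tmain1} these two conditional laws coincide for every such $b$, and integrating against $\mu $ yields \eqref{;cor1q1}.

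To obtain \eqref{;cor1q2}, I would keep only the first and third coordinates of \eqref{;cor1q1}, namely $(\cn (\Rd ),\Rd )\eqd (R,\cn (R))$, and specialize to $a=0$. Since $R_{0}=0$ and $R\ge 0$, every minimum of $R$ over a time interval containing the origin equals $0$. Inserting this into expression~\eqref{;cnexpr}: for $\cn (R)$ one has $\min _{0\le u\le s}R_{u}=0$, and for $\cn (\Rd )$ one has $\min _{s\le u\le t}\Rd _{u}=\min _{0\le v\le t-s}R_{v}=0$. Removing the remaining absolute values by means of $0\le \min _{s\le u\le t}R_{u}\le R_{t}$ and $0\le \min _{t-s\le u\le t}R_{u}\le R_{t}$ then gives
\begin{align*}
 \cn (R)(s)&=R_{s}+R_{t}-2\min _{s\le u\le t}R_{u}, \\
 \cn (\Rd )(s)&=R_{t-s}+R_{t}-2\min _{t-s\le u\le t}R_{u},
\end{align*}
so that $(\cn (\Rd ),\Rd )\eqd (R,\cn (R))$ becomes exactly \eqref{;cor1q2}.

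The step that deserves the most care is the disintegration: one must use that the conditioned Bessel process given $R_{t}=b$ is genuinely the Bessel bridge $\rho $, and that the reversal of an $a\to b$ bridge is a $b\to a$ bridge, so that conditioning turns $\Rd $ into the very process $\rd $ featured in \tref{;tmain1}. It is also worth noting that the conditioning event $\{ R_{t}=b\}$ is the same for both triples, so a single measure $\mu $ governs the integration on each side; once this is in place, the corollary is an immediate consequence of \tref{;tmain1}.
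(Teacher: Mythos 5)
Your proposal is correct and follows essentially the same route as the paper: the paper likewise obtains \eqref{;cor1q1} by applying \tref{;tmain1} under each bridge law $\pbes{a}{b}$ and integrating against $\pBES{a}(R_{t}\in db)$, which is exactly your disintegration over the terminal value. For \eqref{;cor1q2} the paper invokes \lref{;fnq} (properties of $\cn$ and $\cq$ for nonnegative paths vanishing at an endpoint) rather than expanding \eqref{;cnexpr} by hand, but your inline computation removing the absolute values is precisely the content of that lemma, so the two arguments coincide in substance.
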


\begin{cor}\label{;cor2}
Define the process $X=\{ X_{s}\} _{s\ge 0}$ by 
\begin{align*}
 X_{s}:=\frac{R_{s}}{1+s},\quad s\ge 0.
\end{align*}
Then it holds that 
\begin{align*}
 \left\{ 
 \left( 
 \cq (X)(1/s),\,X_{1/s}
 \right) 
 \right\} _{s>0}
 \eqd 
 \left\{ 
 \left( 
 X_{s},\,\cq (X)(s)
 \right) 
 \right\} _{s>0},
\end{align*}
which, by multiplying both sides by the deterministic function 
$1+s,\,s>0$, is equivalently stated as 
\begin{align*}
&\left\{ 
\left( 
sR _{1/s}-2(1+s)\min _{0\le u\le 1/s}\frac{R_{u}}{1+u}+a(1+s),\,sR_{1/s}
\right) 
\right\} _{s>0}\\
&\eqd 
\left\{ 
\left( 
R_{s},\,R_{s}-2(1+s)\min _{0\le u\le s}\frac{R_{u}}{1+u}+a(1+s)
\right) 
\right\} _{s>0}.
\end{align*}
\end{cor}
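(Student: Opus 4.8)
The plan is to pass from the time-inversion statement on $[0,\infty )$ to an ordinary time-reversal statement on $[0,1]$, where \tref{;tmain1} applies directly; the conduit is the classical space--time change relating a three-dimensional Bessel process to its bridge. Throughout I would work with the concise form of the corollary and recover the explicit one at the very end by a deterministic rescaling.

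First I would set $v:=s/(1+s)$, an increasing homeomorphism of $[0,\infty )$ onto $[0,1)$ with inverse $s=v/(1-v)$, and introduce
\begin{align*}
 \bb _{v}:=(1-v)R_{v/(1-v)},\quad 0\le v<1,\qquad \bb _{1}:=0.
\end{align*}
By the standard transformation relating Bessel processes and Bessel bridges (see \cite{ry}), $\bb =\{ \bb _{v}\} _{0\le v\le 1}$ is a three-dimensional Bessel bridge from $a$ to $0$ over $[0,1]$, and since $1+s=1/(1-v)$ one has $X_{s}=\bb _{v}$. As $v\mapsto s$ is increasing, the running minimum entering $\cpb $ is preserved, so $\cq (X)(s)=\cq (\bb )(v)$ by \eqref{;cq} and \eqref{;cpb}. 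Writing $\bbd $ for the time reversal of $\bb $ and using $1-v=1/(1+s)$, one reads off $X_{1/s}=\bb _{1-v}=\bbd _{v}$ and $\cq (X)(1/s)=\cq (\bb )(1-v)=\overbrace{\cq (\bb )}^{\circ }(v)$. Composing both processes in \cref{;cor2} with the fixed homeomorphism $v\mapsto v/(1-v)$ of $(0,1)$ onto $(0,\infty )$ preserves identity in law, so the assertion is equivalent to
\begin{align}\label{;reduce}
 \bigl( \overbrace{\cq (\bb )}^{\circ },\,\bbd \bigr) \eqd \bigl( \bb ,\,\cq (\bb )\bigr) .
\end{align}

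Next I would exploit that the choice $b=0$ forces the bridge to attain its minimum at an endpoint. Since $\bb \ge 0$ with $\bb _{1}=0$, we have $\min _{v\le u\le 1}\bb _{u}=0$ for every $v$; feeding this into \eqref{;cnexpr} and using $0\le \min _{0\le u\le v}\bb _{u}\le \bb _{0}=a$ to resolve the absolute values, I expect the deterministic identity $\cn (\bb )=\cq (\bb )$. Likewise $\bbd \ge 0$ with $\bbd _{0}=0$ gives $\min _{0\le u\le v}\bbd _{u}=0$, and a parallel computation, after the substitution $u\mapsto 1-u$ in the running minimum of $\bb $, should yield $\overbrace{\cq (\bb )}^{\circ }=\cn (\bbd )$. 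Granting these, \eqref{;reduce} follows at once: applying \eqref{;concise} (now with time horizon $1$) to the bridge $\bb $ and keeping the first and third coordinates gives $\bigl( \cn (\bbd ),\,\bbd \bigr) \eqd \bigl( \bb ,\,\cn (\bb )\bigr) $, into which $\cn (\bbd )=\overbrace{\cq (\bb )}^{\circ }$ and $\cn (\bb )=\cq (\bb )$ substitute to produce exactly \eqref{;reduce}. Finally, multiplying both coordinates on each side by the deterministic function $1+s$ preserves identity in law and turns the concise identity into the explicit form stated in the corollary.

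The main obstacle I anticipate is not the algebra but the rigorous justification of the reparametrisation: one must confirm that $\bb $ is a genuine Bessel bridge from $a$ to $0$, in particular that $(1-v)R_{v/(1-v)}\to 0$ as $v\to 1$, which rests on the transience estimate $R_{s}/s\to 0$ for the three-dimensional Bessel process, and that the inversion $s\mapsto 1/s$ corresponds precisely to the reversal $v\mapsto 1-v$ of $\bb $. Once these points are secured, the decisive simplification is the collapse, special to $b=0$, of the three-component identity \eqref{;concise} to the two-component identity \eqref{;reduce}, effected by $\cn (\bb )=\cq (\bb )$ and $\overbrace{\cq (\bb )}^{\circ }=\cn (\bbd )$.
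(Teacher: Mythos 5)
Your proposal is correct and follows essentially the same route as the paper: the time change $v=s/(1+s)$ with $\bb =\iota (R)$ is exactly the paper's use of \eqref{;inver}, your collapse of \eqref{;concise} via $\cn (\bb )=\cq (\bb )$ and $\cn \bigl( \bbd \bigr) =\overbrace{\cq (\bb )}^{\circ }$ is precisely \lref{;fnq} applied at the endpoints $\bb _{1}=0$ and $\bbd _{0}=0$, and your inline reparametrisation computations reproduce the content of \lref{;linv}. The only cosmetic difference is that the paper packages these steps as separate lemmas rather than carrying them out in the body of the proof.
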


We refer the reader to \ssref{;prfcors12} for counterparts to the above two corollaries 
in the case of Brownian motion; see \eqref{;btsd} and \tref{;binvthm}.

Notice that the first component in the left-hand side of \eqref{;cor1q2}, as well as 
the second component in the right-hand side of \eqref{;cor1q2}, gives a 
nonnegative process as $R$ does:
\begin{align*}
 R_{t-s}+R_{t}-2\min _{t-s\le u\le t}R_{u}
 &\ge R_{t-s}+R_{t}-2
 \min \left\{ 
 R_{t-s},R_{t}
 \right\} \\
 &=\left| 
 R_{t-s}-R_{t}
 \right| \ge 0
\end{align*}
for $0\le s\le t$. 
\cref{;cor1} is obtained by letting the parameter $b$ in \tref{;tmain1} be distributed 
according to the law of $R_{t}$. On the other hand, if we consider the case 
$t=1$ and $b=0$ in the theorem, then, thanks to the identity in law 
\begin{align}\label{;inver}
 \{ \rho _{s}\} _{0\le s<1}\eqd 
 \left\{ 
 (1-s)R_{s/(1-s)}
 \right\} _{0\le s<1}
\end{align}
in that case (see, e.g., \cite[Chapter~XI, Exercise~\thetag{3.6}]{ry}), 
we obtain \cref{;cor2}; note that, when $a=0$, the corollary reduces to 
the usual time-inversion of the standard three-dimensional Bessel process: 
$\{ sR_{1/s}\} _{s>0}\eqd \{ R_{s}\} _{s>0}$, which is because 
\begin{align*}
 \pr \!\left( \min _{0\le u\le s}\frac{R_{u}}{1+u}=0\ \text{for all $s\ge 0$}\right) =1.
\end{align*}
See \ssref{;prfcors12} for more details of the proofs of the above two corollaries.

For every $x\in \R $, let 
$\bb ^{x}=\{ \bb ^{x}_{s}\} _{0\le s \le t}$ denote a one-dimensional Brownian bridge 
from $0$ to $x$ over $[0,t]$. Notice that these 
Brownian bridges are related via 
\begin{align}\label{;bb}
 \bb ^{x}\eqd \left\{ \bb ^{y}_{s}+\frac{x-y}{t}s\right\} _{0\le s \le t}
\end{align}
for any $x,y\in \R $.
The proof of our \tref{;tmain1} hinges upon the following conditional 
identity between Brownian bridges, which is new to our best knowledge and 
will be used to derive \tref{;genr}, a generalization of \tref{;tmain1}.

\begin{thm}\label{;tmain2}
Let $x,y\in \R $ be such that $|x|\ge |y|$. Then the three-dimensional process 
\begin{align*}
 \left( 
 \bb ^{x}_{s},\,\cp (\bb ^{x})(s),\,\cm _{y}(\bb ^{x})(s)
 \right) ,\quad 0\le s\le t,
\end{align*}
is identical in law with the process 
\begin{align*}
 \left( 
 \cm _{x}(\bb ^{y})(s),\,\cp (\bb ^{y})(s),\,\bb ^{y}_{s}
 \right) ,\quad 0\le s\le t,
\end{align*}
conditioned on the event that $\cp (\bb ^{y})(t)\ge |x|$.
\end{thm}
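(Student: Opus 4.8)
The plan is to treat the middle component---the Pitman transform---as a pivot, and to realize each of the two outer components as an explicit deterministic functional of it. Concretely, I would first condition on the terminal value $v:=\cp(\bb^{x})(t)$ of the middle component. By Pitman's $2M-X$ theorem \cite{jwp}, applied after conditioning a Brownian motion on its endpoint, the conditional law of $\cp(\bb^{x})$ given $\cp(\bb^{x})(t)=v$ is that of a three-dimensional Bessel bridge from $0$ to $v$; the same description holds for $\cp(\bb^{y})$ given $\cp(\bb^{y})(t)=v$. Thus the two middle components share the same conditional law given their endpoints, and it remains only to match the laws of these endpoints and to reconstruct the outer components from the common pivot.

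For the endpoints, observe first that the conditioning in the statement is forced by the geometry: since $\cp(\bb^{x})(t)=2\max_{0\le u\le t}\bb^{x}_{u}-x$ and $\max_{0\le u\le t}\bb^{x}_{u}\ge\max\{0,x\}$, one has $\cp(\bb^{x})(t)\ge|x|$ automatically, whereas on the other side only $\cp(\bb^{y})(t)\ge|y|$, so the event $\{\cp(\bb^{y})(t)\ge|x|\}$ is exactly what aligns the two ranges. I would then read off the two endpoint densities from the law of the maximum of a Brownian bridge: for $v\ge|x|$,
\begin{align*}
 \pr\bigl(\cp(\bb^{x})(t)\in dv\bigr)=\frac{v}{t}\,e^{-(v^{2}-x^{2})/(2t)}\,dv,
\end{align*}
and likewise with $x$ replaced by $y$ for $v\ge|y|$. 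On the common range $v\ge|x|\,(\ge|y|)$ these densities differ only by the constant factor $e^{(x^{2}-y^{2})/(2t)}$, which equals $1/\pr\bigl(\cp(\bb^{y})(t)\ge|x|\bigr)$. Hence conditioning the right-hand endpoint law on $\{\cp(\bb^{y})(t)\ge|x|\}$ reproduces the left-hand endpoint law, and the two middle components agree in law after the stated conditioning.

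It then remains to show that the two outer components are the \emph{same} deterministic functionals of this common pivot. Writing $\rho$ for a generic three-dimensional Bessel bridge from $0$ to $v$ with $v\ge|x|$, the bridge form of Pitman's inversion recovers $\bb^{x}$ from $\rho$ and the fixed parameter $x$ via
\begin{align*}
 \bb^{x}_{s}=2\min\Bigl\{\min_{s\le u\le t}\rho_{u},\,\tfrac{v+x}{2}\Bigr\}-\rho_{s},
\end{align*}
a deterministic map $\bb^{x}=F_{x}(\rho)$ (whose value at $s=0$ is $0$ and at $s=t$ is $x$, since $v\ge|x|$); likewise $\bb^{y}=F_{y}(\rho)$. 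Everything therefore reduces to the purely pathwise identities
\begin{align*}
 \cm_{y}\bigl(F_{x}(\rho)\bigr)=F_{y}(\rho),\qquad
 \cm_{x}\bigl(F_{y}(\rho)\bigr)=F_{x}(\rho),
\end{align*}
valid for all nonnegative paths $\rho$ from $0$ to $v$ with $v\ge|x|\ge|y|$, expressing that $\cm_{y}$ intertwines the two inversion maps and is the involution exchanging them.

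I expect this last deterministic step to be the main obstacle, and it is where the hypothesis $|x|\ge|y|$ is consumed: it is precisely the inequality $v\ge|x|\ge|y|$ that collapses the nested absolute values in \eqref{;cmx}--\eqref{;cmbx} into the affine-in-$\min/\max$ form of $F_{y}$. As a first, reassuring partial check I would verify the boundary behaviour $\cm_{y}(\bb^{x})(0)=0$ and $\cm_{y}(\bb^{x})(t)=y$, matching the endpoints of $\bb^{y}$; both reduce to $\max_{0\le u\le t}\bb^{x}_{u}\ge(x\pm y)/2$, again a consequence of $|x|\ge|y|$. The manipulations of the forward and backward running maxima needed for the full intertwining are exactly the content of \lref{;mxpl}, which I would invoke to complete the argument.
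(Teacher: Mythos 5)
Your proposal is correct, and it takes a genuinely different route from the paper. The paper never conditions on the endpoint of the Pitman transform: it proves a disintegration formula for unconditioned Brownian motion (\pref{;disint}), converts it into the bridge relation \eqref{;rel}, and applies that relation twice---once for $y$, once for $x$---chaining the two applications by the identity $\cm _{x}(\cm _{y}(B))=\cm _{x}(B)$ of \lref{;mxpl}. Your pivot argument replaces this double application by a conditional-independence statement. Two remarks on your steps. First, the claim that the conditional law of $\cp (\bb ^{x})$ given $\cp (\bb ^{x})(t)=v$ is a Bessel bridge, which you attribute tersely to ``Pitman after conditioning on the endpoint,'' does need the strengthened form of Pitman's theorem in which $B_{t}$ is conditionally uniform on $[-\cp (B)(t),\cp (B)(t)]$ given the whole path of $\cp (B)$---precisely the paper's \eqref{;disintq3}; a Bayes argument then shows the law of $\cp (\bb ^{x})$ is the BES(3) law tilted by $\frac{1}{2R_{t}}\ind _{\{ R_{t}\ge |x|\} }$, a density depending only on the endpoint, which washes out under your conditioning. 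Your endpoint-density matching is exactly the paper's identity \eqref{;rks}, so that step is sound. Second, your ``main obstacle'' is easier than you anticipate: by \pref{;mxp}, for \emph{every} path $\phi $ and every $z$, $\cm _{z}(\phi )$ equals the explicit functional $-\cp (\phi )(s)+\min \bigl\{ 2\min _{s\le u\le t}\cp (\phi )(u),\,\cp (\phi )(t)+z\bigr\} $ of $\cp (\phi )$, i.e.\ your $F_{z}(\cp (\phi ))$; combined with the inversion \eqref{;idenp}, the left triple is a.s.\ $\bigl( F_{x}(\rho ),\rho ,F_{y}(\rho )\bigr) $ with $\rho =\cp (\bb ^{x})$ and the right triple is a.s.\ $\bigl( F_{x}(\rho '),\rho ',F_{y}(\rho ')\bigr) $ with $\rho '=\cp (\bb ^{y})$, so both sides are the \emph{same} deterministic functionals of their pivots and no intertwining identity---in particular no appeal to \lref{;mxpl}, the one ingredient the paper must prove by Laplace's method in the Appendix---is needed for this theorem. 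As for what each route buys: the paper's organization makes \pref{;disint} available as a standalone result of independent interest, while your route is probabilistically more transparent, explains already at this stage why three-dimensional Bessel bridges appear (foreshadowing \tref{;tmain1}), and, streamlined as above, proves \tref{;tmain2} using only \pref{;mxp}, \eqref{;idenp}, and elementary density computations.
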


The above theorem indicates in particular that, a.s.,   
\begin{align}
 \cm _{y}(\bb ^{x})(0)=0 && \text{and} && \cm _{y}(\bb ^{x})(t)=y \label{;bv1}
\intertext{when $|x|\ge |y|$, and that, a.s.\ on the event that $\cp (\bb ^{y})(t)\ge |x|$,}
 \cm _{x}(\bb ^{y})(0)=0 && \text{and} && \cm _{x}(\bb ^{y})(t)=x. \label{;bv2}
\end{align}
In view of \lref{;mxbv} below, these are 
really the case. As for \eqref{;bv1}, note that
\begin{align}\label{;plb}
 \pr \!\left( 
 \cp (\bb ^{x})(t)\ge |x|
 \right) =1; 
\end{align}
indeed, by the definition of $\cp $, 
\begin{align*}
 \cp (\bb ^{x})(t)\ge 2\max \left\{ \bb ^{x}_{0},\bb ^{x}_{t}\right\} -\bb ^{x}_{t}=|x| \quad \text{a.s.},
\end{align*}
for $\bb ^{x}_{0}=0$ and $\bb ^{x}_{t}=x$ a.s. 
Therefore the condition~\eqref{;mxbvc} in \lref{;mxbv} is fulfilled in such a 
way that, a.s., 
\begin{align*}
 |y-\bb ^{x}_{0}|=|y|\le |x|\le \cp (\bb ^{x})(t)=\cp (\bb ^{x})(t)-\bb ^{x}_{0},
\end{align*}
and hence \eqref{;bv1} holds true. On the event that $\cp (\bb ^{y})(t)\ge |x|$, 
\eqref{;bv2} is a direct consequence of \lref{;mxbv} for $\bb ^{y}_{0}=0$ a.s. 
We also observe that \tref{;tmain2} is consistent with \eqref{;plb}, meaning that 
the theorem entails  
\begin{align*}
 \pr \!\left( 
 \cp (\bb ^{x})(t)\ge |x|
 \right) =\pr \!\left( 
 \cp (\bb ^{y})(t)\ge |x|\rmid| 
 \cp (\bb ^{y})(t)\ge |x|
 \right) .
\end{align*}
As will be seen just above the proof of \cref{;cor3} given at the end of 
the next section, \tref{;tmain2} also implies that \eqref{;concise} 
holds true with $\rho $ replaced by any one-dimensional Brownian bridge 
$\bb$ over $[0,t]$: 
\begin{align}\label{;conciseb}
 \bigl( 
 \cn \bigl( \bbd \bigr) ,\,\cq \bigl( \bbd ) ,\,\bbd 
 \bigr) \eqd 
 \left( 
 \bb ,\,\cq (\bb ),\,\cn (\bb )
 \right) .
\end{align}

We give an outline of the paper. \tref{;tmain2} is proven in \sref{;prftmain2}, in which 
\pref{;disint}, a disintegration formula for Brownian motion by means of Pitman's 
theorem, plays a key role. The proofs of \tref{;tmain1} and its \csref{;cor1} 
and \ref{;cor2} are given in \sref{;prftmain1}; the section is concluded with 
some properties of the transformations $\cn $ and $\cq $. In the Appendix, 
we prove \lref{;mxpl} to be used in the proof of \tref{;tmain2}.

In the sequel, given a positive integer $d$, we write $\ctd{t}{d}$ 
for the space of $\R ^{d}$-valued continuous functions over $[0,t]$. We 
equip $C([0,t];\R ^{d})$ with the topology of uniform convergence, 
and we say that a real-valued function on this space is 
measurable if it is Borel-measurable with respect to the above topology. 
When we say a Brownian bridge, we refer to a one-dimensional one 
unless otherwise specified. 
Given two events $E_{1},E_{2}$, we write $E_{1}=E_{2}$ a.s.\ if their indicator 
functions agree almost surely. Other notation and terminology will be 
introduced as needed.

\section{Proof of \tref{;tmain2}}\label{;prftmain2}

This section is devoted to the proof of \tref{;tmain2}. One of the main ingredients 
of the proof is a disintegration formula for Brownian motion in terms of the transformations 
$\cm _{x}$ and Pitman's transformation $\cp $ defined respectively by \eqref{;cmx} 
and \eqref{;cp}.  

\subsection{Disintegration formula for Brownian motion}\label{;prftmain2s1}

We begin with the following representation of $\cm _{x}$ in terms of $\cp $.

\begin{prop}\label{;mxp}
For every $x\in \R $, we have 
\begin{align}\label{;mxpq1}
 \cm _{x}(\phi )(s)
 =-\cp (\phi )(s)+\min \Bigl\{ 
 2\min _{s\le u\le t}\cp (\phi )(u),\cp (\phi )(t)+x
 \Bigr\} 
\end{align}
for all $0\le s\le t$ and $\phi \in C([0,t];\R )$.
\end{prop}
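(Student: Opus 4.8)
The plan is to prove the identity \eqref{;mxpq1} by manipulating the right-hand side using the definition \eqref{;cp} of $\cp$ and reducing everything to the maxima $M_{s}:=\max_{0\le u\le s}\phi_{u}$ that already appear in the definition \eqref{;cmx} of $\cm_{x}$. First I would record the two quantities built from $\cp$ that enter the formula. Since $\cp(\phi)(u)=2M_{u}-\phi_{u}$ and $M$ is nondecreasing, I expect the running minimum of $\cp(\phi)$ over $[s,t]$ to be controlled by the behaviour of $\phi$ after time $s$; concretely I would compute
\begin{align*}
 \min_{s\le u\le t}\cp(\phi)(u)
 =\min_{s\le u\le t}\bigl( 2M_{u}-\phi_{u}\bigr),
\end{align*}
and I anticipate that this equals $2M_{s}-\max_{s\le u\le t}\phi_{u}$ up to a correction coming from whether $M$ increases on $[s,t]$. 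The terminal value is simpler: $\cp(\phi)(t)=2M_{t}-\phi_{t}$ with $M_{t}=\max_{0\le u\le t}\phi_{u}$.

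The key step will be to show that $2\min_{s\le u\le t}\cp(\phi)(u)$ reduces to a clean expression in $M_{s}$ and the two one-sided maxima of $\phi$. The natural split is by whether the global maximum is attained on $[0,s]$ or on $[s,t]$, i.e.\ by the sign of $\max_{0\le u\le s}\phi_{u}-\max_{s\le u\le t}\phi_{u}$, which is exactly the quantity inside the absolute values in \eqref{;cmx}. On the region where $M_{s}=M_{t}$ (the maximum is on $[0,s]$), the running minimum of $2M_{u}-\phi_{u}$ over $[s,t]$ should be $2M_{s}-\max_{s\le u\le t}\phi_{u}$; on the complementary region one analyses $2M_{u}-\phi_{u}$ for $u\ge s$ directly. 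After substituting, the outer $\min\{\cdot,\cp(\phi)(t)+x\}$ together with the leading $-\cp(\phi)(s)=-2M_{s}+\phi_{s}$ should collapse, via the elementary identity $\min\{p,q\}=\tfrac{p+q}{2}-\bigl|\tfrac{p-q}{2}\bigr|$, into precisely the combination $\phi_{s}-\tfrac{\phi_{t}-x}{2}-\bigl|\tfrac{\phi_{t}-x}{2}+M_{s}-\max_{s\le u\le t}\phi_{u}\bigr|+\bigl|M_{s}-\max_{s\le u\le t}\phi_{u}\bigr|$ defining $\cm_{x}(\phi)(s)$.

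The main obstacle will be the evaluation of $\min_{s\le u\le t}\cp(\phi)(u)$, because $\cp(\phi)$ is not monotone and its running minimum depends subtly on the interplay between the nondecreasing running maximum $M$ and the path $\phi$ itself. The cleanest route is likely to verify the identity $\min_{s\le u\le t}(2M_{u}-\phi_{u})=M_{s}+\min\{M_{s},\,2\min_{s\le u\le t}M_{u}-\max_{s\le u\le t}\phi_{u}\}$-type relation, or more simply to check \eqref{;mxpq1} pointwise in the two cases above and confirm both sides agree. Throughout I would work at a fixed $s$ and fixed $\phi$ so that everything is a deterministic computation, and I would double-check consistency at the endpoints $s=0$ and $s=t$ against the boundary values \eqref{;bv1}, where $\cm_{x}(\phi)(0)=0$ and $\cm_{x}(\phi)(t)=x$ should emerge from the formula. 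Once the case analysis is set up the remaining manipulations are routine algebra with $\min$, $\max$, and absolute values.
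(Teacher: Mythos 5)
Your overall route is the same as the paper's: reduce everything to $M_{s}:=\max_{0\le u\le s}\phi _{u}$ by first evaluating $\min_{s\le u\le t}\cp (\phi )(u)$ in terms of the one-sided maxima, and then collapsing the outer minimum with the elementary identity $2\min \{ a,b\} =a+b-|a-b|$ to recover the definition~\eqref{;cmx}. The paper isolates your ``main obstacle'' as a separate statement (\lref{;minp}), namely
\begin{align*}
 \min _{s\le u\le t}\cp (\phi )(u)
 =M_{s}+\max \Bigl\{ M_{s}-\max _{s\le u\le t}\phi _{u},\,0\Bigr\} ,
\end{align*}
and your case split according to the sign of $M_{s}-\max _{s\le u\le t}\phi _{u}$ is exactly the content of this formula; the subsequent algebra you describe is the paper's proof of \pref{;mxp} verbatim.

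There are, however, two concrete problems with the proposal as written. First, the unifying ``-type relation'' you guess is false even in shape: since $\min _{s\le u\le t}M_{u}=M_{s}$, your expression reads $M_{s}+\min \{ M_{s},\,2M_{s}-\max _{s\le u\le t}\phi _{u}\} $, which equals $2M_{s}$ in your first case (correct value: $2M_{s}-\max _{s\le u\le t}\phi _{u}$) and $3M_{s}-\max _{s\le u\le t}\phi _{u}$ in the second (correct value: $M_{s}$); the correction term must be a positive part (a maximum with $0$), not a minimum. Second, and more importantly, the case where the global maximum of $\phi $ is attained after time $s$ is precisely where all the work lies, and you leave it at ``analyse directly'' with no argument. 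The easy half is the lower bound $\cp (\phi )(u)=2M_{u}-\phi _{u}\ge M_{u}\ge M_{s}$; for the matching upper bound one must exhibit a time at which the value $M_{s}$ is attained. Set $s^{*}:=\inf \{ u\in [s,t]:\phi _{u}=M_{u}\} $; this set is nonempty in the present case because at any time $v$ where $\phi $ attains $\max _{s\le u\le t}\phi _{u}>M_{s}$ one has $\phi _{v}=M_{v}$. Since $\phi _{u}<M_{u}$ on $[s,s^{*})$, the running maximum cannot increase there, so $M_{s^{*}}=M_{s}$ and hence $\cp (\phi )(s^{*})=2M_{s^{*}}-\phi _{s^{*}}=M_{s}$. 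This is exactly the $s^{*}$ (and $\sigma $) argument in the paper's proof of \lref{;minp}. With that step supplied, and with your (correct) first-case computation where $M_{u}\equiv M_{s}$ on $[s,t]$, the remaining manipulations are indeed routine and your plan coincides with the paper's proof.
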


To prove the above proposition, we prepare a lemma. 
For every $a\in \R $, we write $a_{+}$ for $\max \{ a,0\} $.

\begin{lem}\label{;minp}
For every $\phi \in C([0,t];\R )$, one has 
\begin{align}\label{;minpq1}
 \min _{s\le u\le t}\cp (\phi )(u)
 =\max _{0\le u\le s}\phi _{u}+\Bigl( 
 \max _{0\le u\le s}\phi _{u}-\max _{s\le u\le t}\phi _{u}
 \Bigr) _{+}
\end{align}
for all $0\le s\le t$.
\end{lem}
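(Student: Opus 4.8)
The plan is to prove the identity by establishing the two inequalities separately. Throughout, abbreviate $m:=\max_{0\le u\le s}\phi _{u}$ and $N:=\max_{s\le u\le t}\phi _{u}$, and recall from \eqref{;cp} that $\cp (\phi )(u)=2\max_{0\le v\le u}\phi _{v}-\phi _{u}$, where the running maximum $\max_{0\le v\le u}\phi _{v}$ is nondecreasing in $u$ and equals $m$ at $u=s$. The first thing I would observe is that the right-hand side of \eqref{;minpq1} can be rewritten as
\begin{align*}
 m+(m-N)_{+}=\max \{ m,\,2m-N\} ,
\end{align*}
which suggests proving the lower bound $\min_{s\le u\le t}\cp (\phi )(u)\ge \max \{ m,2m-N\} $ together with a matching upper bound.

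For the lower bound I would argue that for every $u\in [s,t]$ both $\cp (\phi )(u)\ge m$ and $\cp (\phi )(u)\ge 2m-N$ hold. The first follows because $\max_{0\le v\le u}\phi _{v}\ge m$ and $\phi _{u}\le \max_{0\le v\le u}\phi _{v}$, so that $\cp (\phi )(u)\ge \max_{0\le v\le u}\phi _{v}\ge m$. The second follows from $\max_{0\le v\le u}\phi _{v}\ge m$ together with $\phi _{u}\le N$, giving $\cp (\phi )(u)=2\max_{0\le v\le u}\phi _{v}-\phi _{u}\ge 2m-N$. Taking the minimum over $u\in [s,t]$ and combining the two estimates yields $\min_{s\le u\le t}\cp (\phi )(u)\ge \max \{ m,2m-N\} $; this part is a routine pair of pointwise estimates.

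For the matching upper bound it suffices to exhibit a single point $u_{\ast }\in [s,t]$ at which $\cp (\phi )(u_{\ast })$ equals $\max \{ m,2m-N\} $, and here I would distinguish two cases. If $N\le m$, then $\phi _{u}\le m$ for all $u\in [s,t]$, so the running maximum stays equal to $m$ throughout $[s,t]$; choosing $u_{\ast }\in [s,t]$ with $\phi _{u_{\ast }}=N$ gives $\cp (\phi )(u_{\ast })=2m-N=\max \{ m,2m-N\} $ since $N\le m$. If instead $N>m$, I would set $u_{\ast }:=\inf \{ u\in [s,t]:\phi _{u}=m\} $, the first time $\phi $ returns to the level $m$; since $\phi _{s}\le m<N$, continuity of $\phi $ and the intermediate value theorem guarantee that $u_{\ast }$ is well defined with $\phi _{u_{\ast }}=m$ and $\phi _{u}\le m$ on $[s,u_{\ast }]$, whence the running maximum at $u_{\ast }$ is still $m$ and $\cp (\phi )(u_{\ast })=2m-m=m=\max \{ m,2m-N\} $.

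The step I expect to demand the most care is this last attainment argument in the case $N>m$: one must check via continuity that the running maximum has not yet strictly exceeded $m$ at the first return time $u_{\ast }$, so that $\cp (\phi )(u_{\ast })$ collapses to exactly $m$. Once both inequalities are in hand the lemma follows by combining them, and the resulting point-attainment picture will also be convenient later for the representation of $\cm _{x}$ in \pref{;mxp}.
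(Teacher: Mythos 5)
Your proof is correct, and it follows essentially the same route as the paper's: a pointwise lower bound plus an attainment argument at the first time after $s$ at which $\phi$ touches its running maximum (your $u_{\ast}$ is exactly the paper's $s^{*}$), with your case split $N\le m$ versus $N>m$ equivalent to the paper's split on whether $s$ lies after or before the time $\sigma$ of the global maximum. The only cosmetic difference is your unified rewriting of the right-hand side as $\max\{m,\,2m-N\}$, where the paper keeps the $(\,\cdot\,)_{+}$ form and computes the case $s\ge\sigma$ directly.
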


\begin{proof}
Given $\phi \in C([0,t];\R )$, let 
\begin{align*}
 \sg :=\inf \Bigl\{ 
 s\in [0,t];\,\phi _{s}=\max _{0\le u\le t}\phi _{u}
 \Bigr\} .
\end{align*}
If $s\ge \sg $, then, by the definition of $\cp $, the left-hand side of the claimed 
relation~\eqref{;minpq1} is equal, thanks to the continuity of $\phi $, to 
\begin{align*}
 \min _{s\le u\le t}(2\phi _{\sg }-\phi _{u})
 &=2\phi _{\sg }-\max _{s\le u\le t}\phi _{u}\\
 &=\phi _{\sg }+\Bigl( 
 \phi _{\sg }-\max _{s\le u\le t}\phi _{u}
 \Bigr) _{+},
\end{align*}
which agrees with the right-hand side of \eqref{;minpq1}. 
Next observe that 
\begin{align*}
 \min _{s\le u\le t}\cp (\phi )(u)\ge \max _{0\le u\le s}\phi _{u}
 \quad \text{for any $0\le s\le t$,}
\end{align*}
which is because, by the definition of $\cp $, 
\begin{align*}
 \min _{s\le u\le t}\cp (\phi )(u)&\ge \min _{s\le u\le t}
 \Bigl( 
 2\max _{0\le v\le u}\phi _{v}-\max _{0\le v\le u}\phi _{v}
 \Bigr) \\
 &=\max _{0\le v\le s}\phi _{v}.
\end{align*}
Therefore, in order to complete the proof of the lemma, it suffices to show that 
\begin{align}\label{;minpq3}
 \min _{s\le u\le t}\cp (\phi )(u)\le \max _{0\le u\le s}\phi _{u}
\end{align}
for any $s\le \sg $. If $s\le \sg $ is such that $\phi _{s}=\max _{0\le u\le s}\phi _{u}$, 
then \eqref{;minpq3} holds since, in this case, we have 
\begin{align*}
 \min _{s\le u\le t}\cp (\phi )(u)\le \cp (\phi )(s)=\max _{0\le u\le s}\phi _{u}.
\end{align*}
On the other hand, if $s\le \sg $ is such that $\phi _{s}<\max _{0\le u\le s}\phi _{u}$, 
then, setting 
\begin{align*}
 s^{*}=\inf \Bigl\{ 
 u\in [s,\sg ];\,\phi _{u}=\max _{0\le v\le u}\phi _{v}
 \Bigr\} ,
\end{align*}
we have 
\begin{align*}
 \min _{s\le u\le t}\cp (\phi )(u)&\le \min _{s^{*}\le u\le t}\cp (\phi )(u)\\
 &\le \max _{0\le u\le s^{*}}\phi _{u}\\
 &=\max _{0\le u\le s^{}}\phi _{u},
\end{align*}
where the last two lines follow from the fact that 
\begin{align*}
 \phi _{s^{*}}=\max _{0\le u\le s^{*}}\phi _{u} && \text{and} && 
 \phi _{u}\le \max _{0\le v\le s}\phi _{v}\quad \text{for $s\le u\le s^{*}$}
\end{align*}
by the definition of $s^{*}$. Consequently, \eqref{;minpq3} holds true for 
any $s\le \sg $ and hence the proof of the lemma is completed.
\end{proof}

By the above lemma, \pref{;mxp} follows readily.

\begin{proof}[Proof of \pref{;mxp}]\label{;prfmxp}
Given $0\le s\le t$ and $\phi \in C([0,t];\R )$, by the definition~\eqref{;cp} of 
$\cp $ and by \lref{;minp}, the right-hand side of the claimed relation~\eqref{;mxpq1} 
is equal to 
\begin{align}\label{;mxpq2}
 \phi _{s}+2\min \left\{ 
 \Bigl( 
 \max _{0\le u\le s}\phi _{u}-\max _{s\le u\le t}\phi _{u}
 \Bigr) _{+},\,
 \max _{0\le u\le t}\phi _{u}-\max _{0\le u\le s}\phi _{u}-\frac{\phi _{t}-x}{2}
 \right\} .
\end{align}
Notice that, in the second term,  
\begin{align*}
 \max _{0\le u\le t}\phi _{u}-\max _{0\le u\le s}\phi _{u}
 =\Bigl( 
 \max _{s\le u\le t}\phi _{u}-\max _{0\le u\le s}\phi _{u}
 \Bigr) _{+}; 
\end{align*}
indeed, if we define $\sg \in [0,t]$ as in the proof of the 
previous lemma, then both sides coincide with 
\begin{align*}
 \max _{s\le u\le t}\phi _{u}-\max _{0\le u\le s}\phi _{u}
\end{align*}
when $s\le \sg $, and with $0$ when $s\ge \sg $. The relation 
\begin{align*}
 2\min \{ a,b\} =a+b-|a-b|,\quad a,b\in \R ,
\end{align*}
applied to 
\begin{align*}
 a=\Bigl( 
 \max _{0\le u\le s}\phi _{u}-\max _{s\le u\le t}\phi _{u}
 \Bigr) _{+} && \text{and} && 
 b=\Bigl( 
 \max _{s\le u\le t}\phi _{u}-\max _{0\le u\le s}\phi _{u}
 \Bigr) _{+}-\frac{\phi _{t}-x}{2},
\end{align*}
shows that \eqref{;mxpq2} agrees with $\cm _{x}(\phi )(s)$ 
by the definition~\eqref{;cmx} of $\cm _{x}$.
\end{proof}

Let $B=\{ B_{t}\} _{t\ge 0}$ be a one-dimensional standard Brownian 
motion. Together with the celebrated Pitman's $2M-X$ theorem, 
\pref{;mxp} yields a disintegration formula for Brownian motion in 
terms of $\cm _{x}$ and $\cp $, which is of interest in its own right 
and announced in \cite[Remark~4.4\thetag{1}]{har24+} without proof. 

\begin{prop}\label{;disint}
For every nonnegative measurable function $F$ on $\ctd{t}{}$, 
we have 
\begin{align}\label{;disintq1}
 \ex [F(B)]=\int _{\R }dx\,\ex \!\left[ 
 \frac{1}{2\cp (B)(t)}F\bigl( 
 \cm _{x}(B)
 \bigr) ;\,\cp (B)(t)\ge |x|
 \right] .
\end{align}
\end{prop}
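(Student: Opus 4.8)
The plan is to combine \pref{;mxp} with Pitman's $2M-X$ theorem, reading the disintegration over $x$ as a disintegration over the (hidden) future infimum of the Bessel process $\cp (B)$. Set $R:=\cp (B)$ and $M_{s}:=\max _{0\le u\le s}B_{u}$. By Pitman's theorem \cite{jwp}, $R$ is a three-dimensional Bessel process issued from $0$, and one has the a.s.\ identities $R_{s}=2M_{s}-B_{s}$ together with $M_{s}=\inf _{u\ge s}R_{u}$; in particular $B_{s}=2M_{s}-R_{s}$. For $0\le s\le t$ I would split the future infimum as
\begin{align*}
 M_{s}=\inf _{u\ge s}R_{u}=\min \Bigl\{ \min _{s\le u\le t}R_{u},\,m\Bigr\} ,\qquad m:=\inf _{u\ge t}R_{u}=M_{t},
\end{align*}
so that, on $[0,t]$, the Brownian path is reconstructed from the Bessel path together with the single number $m\in [0,R_{t}]$ via $B_{s}=2\min \{ \min _{s\le u\le t}R_{u},m\} -R_{s}$.

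The key observation is that this reconstruction is exactly $\cm _{x}$. Indeed, \pref{;mxp} applied to $\phi =B$ gives, since $\cp (B)=R$,
\begin{align*}
 \cm _{x}(B)(s)=-R_{s}+\min \Bigl\{ 2\min _{s\le u\le t}R_{u},\,R_{t}+x\Bigr\} =-R_{s}+2\min \Bigl\{ \min _{s\le u\le t}R_{u},\,\tfrac{R_{t}+x}{2}\Bigr\} ,
\end{align*}
so that with the correspondence $x=2m-R_{t}$ (equivalently $m=(R_{t}+x)/2$) one has $\cm _{x}(B)=\{ 2\min \{ \min _{s\le u\le t}R_{u},m\} -R_{s}\} _{0\le s\le t}$ as paths in $\ctd{t}{}$; moreover $m$ ranges over $[0,R_{t}]$ precisely when $x$ ranges over $[-R_{t},R_{t}]$, i.e.\ when $\cp (B)(t)=R_{t}\ge |x|$.

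It then remains to determine the conditional law of $m$ given $\mathcal{R}:=\sigma (R_{u};\,0\le u\le t)$ and to integrate out. Since $m=\inf _{v\ge 0}R_{t+v}$ and, by the Markov property of $R$, the process $(R_{t+v})_{v\ge 0}$ is conditionally on $\mathcal{R}$ a three-dimensional Bessel process started at $R_{t}$, and since the infimum of a three-dimensional Bessel process started at $\rho >0$ is uniform on $[0,\rho ]$ (it drops below a level $a\in [0,\rho ]$ with probability $a/\rho$, by the standard hitting estimate), we find that $m$ is, conditionally on $\mathcal{R}$, uniformly distributed on $[0,R_{t}]$. As $\cm _{x}(B)$ depends on $B$ only through $R=\cp (B)$, it is $\mathcal{R}$-measurable for each fixed $x$; hence, for nonnegative measurable $F$,
\begin{align*}
 \ex [F(B)\mid \mathcal{R}]=\int _{0}^{R_{t}}F\bigl( \cm _{2m-R_{t}}(B)\bigr) \frac{dm}{R_{t}}=\int _{-R_{t}}^{R_{t}}F\bigl( \cm _{x}(B)\bigr) \frac{dx}{2R_{t}},
\end{align*}
the second equality being the change of variables $x=2m-R_{t}$. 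Taking expectations, applying Tonelli's theorem (legitimate as $F\ge 0$), and writing $R_{t}=\cp (B)(t)$ gives \eqref{;disintq1}. I expect the delicate step to be the conditional law of the future infimum: it forces one to use Pitman's theorem in its strong form, so as to identify $M_{s}$ with the future infimum of $R$ and thereby learn that $B|_{[0,t]}$ is recovered from $R|_{[0,t]}$ together with the single scalar $m$, and then to combine the Markov property beyond time $t$ with the uniform law of the Bessel infimum to produce exactly the factor $1/(2\cp (B)(t))$; the normalization is confirmed by checking that $F\equiv 1$ integrates to $1$.
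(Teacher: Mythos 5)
Your proof is correct, and it rests on exactly the same probabilistic ingredients as the paper's: \pref{;mxp}, Pitman's theorem, the Markov property of the three-dimensional Bessel process, and the uniform law of its global infimum. The logical packaging, however, is genuinely different. The paper never conditions: by Fubini it reduces \eqref{;disintq1} to the identity in law $B\eqd \cm _{(2U-1)\cp (B)(t)}(B)$, where $U$ is uniform on $[0,1]$ and independent of $B$, and it proves this identity using only the pair-law form \eqref{;pit} of Pitman's theorem combined with the deterministic reconstruction \eqref{;idenp}; everything stays at the level of unconditional identities in law. You instead invoke the almost-sure (strong) form of Pitman's theorem, $\max _{0\le u\le s}B_{u}=\inf _{u\ge s}\cp (B)(u)$ for all $s$ a.s., and from it compute the conditional law of $B$ given $\mathcal{R}=\sigma (\cp (B)(u);\,0\le u\le t)$: conditionally, $B$ is the $\mathcal{R}$-measurable path $\cm _{x}(B)$ evaluated at $x=2m-\cp (B)(t)$ with $m$ uniform on $[0,\cp (B)(t)]$. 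This makes the reading of \eqref{;disintq1} as a genuine disintegration transparent, whereas the paper's randomized version leaves it implicit. The one step you assert without justification is the a.s.\ identity $\max _{0\le u\le s}B_{u}=\inf _{u\ge s}\cp (B)(u)$; it is standard, and within the paper's toolkit it follows either from \eqref{;pit} itself (there the second component is a deterministic functional of the first, so the same functional relation must hold a.s.\ for the left-hand pair), or pathwise from \lref{;minp} by letting the horizon tend to infinity and using $\limsup _{u\to \infty }B_{u}=+\infty $ a.s. In short, your conditioning argument buys a sharper, more conceptual statement (an explicit conditional law given $\mathcal{R}$) at the price of this slightly stronger form of Pitman's theorem, while the paper's randomization argument avoids conditional distributions altogether.
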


The formula may be compared with \cite[Proposition~2.1]{har25}; see also 
\rref{;rrks}\thetag{1} below. 

Observe that
\begin{align}\label{;iden}
 \cm _{\phi _{t}}(\phi )=\phi \quad \text{for all $\phi \in \ctd{t}{}$},
\end{align}
which, together with \pref{;mxp}, entails that 
\begin{align}\label{;idenp}
 \phi _{s}&=-\cp (\phi )(s)+\min \Bigl\{ 
 2\min _{s\le u\le t}\cp (\phi )(u),\cp (\phi )(t)+\phi _{t}
 \Bigr\} 
\end{align}
for all $0\le s\le t$ and $\phi \in \ctd{t}{}$. Relation~\eqref{;idenp} 
is obtained in \cite[Proposition~1.4]{har24+} by means of Laplace's 
method.

\begin{proof}[Proof of \pref{;disint}]
Let $U$ be a random variable uniformly distributed on $[0,1]$. 
Since the random variable $2U-1$ is uniformly distributed on $[-1,1]$, 
in view of Fubini's theorem, it suffices to show that 
\begin{align}\label{;disintq2}
 \{ B_{s}\} _{0\le s\le t}
 \eqd 
 \left\{ 
 \cm _{(2U-1)\cp (B)(t)}(B)(s)
 \right\} _{0\le s\le t},
\end{align}
provided that $U$ is independent of $B$. To this end, 
for each $a\ge 0$, let $\bes ^{a}=\{ \bes ^{a}_{s}\} _{s\ge 0}$ be a 
three-dimensional Bessel process starting from $a$, and recall 
Pitman's $2M-X$ theorem \cite[Theorem~1.3]{jwp} which states that 
\begin{align}\label{;pit}
 \Bigl\{ 
 \Bigl( \cp (B)(s),\,\max _{0\le u\le s}B_{u}\Bigr) 
 \Bigr\} _{s\ge 0}\eqd 
 \Bigl\{ 
 \Bigl( 
 \bes ^{0}_{s},\,\inf _{u\ge s}\bes ^{0}_{u}
 \Bigr) 
 \Bigr\} _{s\ge 0}
\end{align} 
(see \cite[Chapter~VI, Theorem~\thetag{3.5}]{ry} for the above formulation of 
the theorem). From \eqref{;pit}, we see in particular that 
\begin{align*}
 \left( 
 \{ \cp (B)(s)\} _{0\le s\le t},\,B_{t}
 \right) \eqd 
 \Bigl( 
 \bigl\{ \bes ^{0}_{s}\bigr\} _{0\le s\le t},\,2\inf _{u\ge t}\bes ^{0}_{u}-\bes ^{0}_{t}
 \Bigr) .
\end{align*}
Thanks to the fact that, given $a>0$, $\inf _{s\ge 0}\bes ^{a}_{s}$ 
is distributed as $aU$ (see, e.g., \cite[Chapter~VI, Corollary~\thetag{3.4}]{ry}), 
the Markov property of $\bes ^{0}$ entails that 
the right-hand side of the last identity in law has the same law as 
\begin{align*}
 \bigl( 
 \bigl\{ 
 \bes ^{0}_{s}
 \bigr\} _{0\le s\le t},\,(2U-1)\bes ^{0}_{t}
 \bigr) ,
\end{align*}
where $U$ is independent of $\bes ^{0}$. Therefore, assuming that 
$U$ is independent of $B$, we have 
\begin{align}\label{;disintq3}
 \left( 
 \{ \cp (B)(s)\} _{0\le s\le t},\,B_{t}
 \right) \eqd 
 \bigl( 
 \{ \cp (B)(s)\} _{0\le s\le t},\,(2U-1)\cp (B)(t)
 \bigr) 
\end{align}
since $\bes ^{0}\eqd \cp (B)$ as \eqref{;pit} entails. 
Combining \eqref{;disintq3} with relation~\eqref{;idenp}, we see that 
$\{ B_{s}\} _{0\le s\le t}$ is identical in law with the process 
\begin{align*}
 -\cp (B)(s)+\min \Bigl\{ 
 2\min _{s\le u\le t}\cp (B)(u),\cp (B)(t)+(2U-1)\cp (B)(t)
 \Bigr\} ,\quad 0\le s\le t,
\end{align*}
which proves \eqref{;disintq2} thanks to \pref{;mxp}.
\end{proof}

\begin{rem}
The strong Markov property reveals that identity~\eqref{;disintq2} holds true 
with $t$ replaced by any stopping time $\tau $ of the process 
$\{ \cp (B)(s)\} _{s\ge 0}$ satisfying $\pr (0<\tau <\infty )=1$.
\end{rem}

Notice that formula~\eqref{;disintq1} indicates that 
$\cm _{x}(B)(0)=0$ a.s.\ on the event that $\cp (B)(t)\ge |x|$. 
This is indeed the case as is seen from the following lemma, 
which will be used in the next subsection as well as in 
\ssref{;prfmxpl} where a proof of \lref{;mxpl} is given.

\begin{lem}\label{;mxbv}
Let a pair $(x,\phi )\in \R \times \ctd{t}{}$ be such that 
\begin{align}\label{;mxbvc}
 |x-\phi _{0}|\le \cp (\phi )(t)-\phi _{0}.
\end{align}
Then we have: 
\begin{align*}
 \thetag{i}\ \cm _{x}(\phi )(0)=\phi _{0}; && 
 \thetag{ii}\ \cm _{x}(\phi )(t)=x.
\end{align*}
\end{lem}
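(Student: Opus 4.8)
The plan is to base everything on the representation \eqref{;mxpq1} of $\cm _{x}$ in terms of Pitman's transformation supplied by \pref{;mxp}, since this turns each endpoint evaluation into an elementary computation involving $\cp (\phi )$ and its running minimum, both of which are easy to pin down at $s=0$ and $s=t$. First I would record that, because $\cp (\phi )(t)\ge \phi _{0}$, the hypothesis \eqref{;mxbvc} is equivalent to the pair of bounds
\begin{align*}
 2\phi _{0}-\cp (\phi )(t)\le x\le \cp (\phi )(t),
\end{align*}
the upper one being what controls \thetag{ii} and the lower one what controls \thetag{i}.

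For \thetag{ii} I would put $s=t$ in \eqref{;mxpq1} and use $\min _{t\le u\le t}\cp (\phi )(u)=\cp (\phi )(t)$ to get
\begin{align*}
 \cm _{x}(\phi )(t)
 =-\cp (\phi )(t)+\min \bigl\{ 2\cp (\phi )(t),\,\cp (\phi )(t)+x\bigr\}
 =\min \bigl\{ \cp (\phi )(t),\,x\bigr\} ,
\end{align*}
whereupon the upper bound $x\le \cp (\phi )(t)$ forces the minimum to equal $x$, giving $\cm _{x}(\phi )(t)=x$.

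For \thetag{i} I would put $s=0$ in \eqref{;mxpq1}, where $\cp (\phi )(0)=\phi _{0}$, and identify the running minimum using \lref{;minp} with $s=0$:
\begin{align*}
 \min _{0\le u\le t}\cp (\phi )(u)
 =\phi _{0}+\Bigl( \phi _{0}-\max _{0\le u\le t}\phi _{u}\Bigr) _{+}
 =\phi _{0},
\end{align*}
the last equality holding since $\phi _{0}\le \max _{0\le u\le t}\phi _{u}$. This yields $\cm _{x}(\phi )(0)=-\phi _{0}+\min \{ 2\phi _{0},\,\cp (\phi )(t)+x\} $, and the lower bound $x\ge 2\phi _{0}-\cp (\phi )(t)$ makes $\cp (\phi )(t)+x\ge 2\phi _{0}$, so the minimum equals $2\phi _{0}$ and $\cm _{x}(\phi )(0)=\phi _{0}$.

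I do not expect a serious obstacle: the whole argument amounts to tracking which half of \eqref{;mxbvc} resolves the minimum at which endpoint. The only step needing a prepared tool is the evaluation of $\min _{0\le u\le t}\cp (\phi )(u)$ at $s=0$, which is exactly what \lref{;minp} delivers. Should one prefer to bypass \pref{;mxp} entirely, the same two identities follow by expanding the definition \eqref{;cmx} of $\cm _{x}$ directly at $s=0$ and $s=t$ and discharging the two absolute values using the sign information carried by the rewritten hypothesis.
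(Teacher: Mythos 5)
Your proposal is correct and follows essentially the same route as the paper's own proof: both evaluate the representation \eqref{;mxpq1} from \pref{;mxp} at $s=0$ and $s=t$, use \lref{;minp} to identify $\min_{0\le u\le t}\cp (\phi )(u)=\phi _{0}$, and resolve each minimum with the corresponding half of the hypothesis \eqref{;mxbvc}. The only cosmetic difference is that you make the two-sided rewriting $2\phi _{0}-\cp (\phi )(t)\le x\le \cp (\phi )(t)$ explicit up front, which the paper invokes implicitly step by step.
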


Note that the right-hand side of condition~\eqref{;mxbvc} is nonnegative 
since, by the definition of $\cp $, 
\begin{align*}
 \cp (\phi )(t)\ge \max _{0\le u\le t}\phi _{u}\ge \phi _{0}.
\end{align*}

\begin{proof}[Proof of \lref{;mxbv}]
\thetag{i} We have $\min _{0\le u\le t}\cp (\phi )(u)=\phi _{0}$ by \lref{;minp}. 
Therefore, by \pref{;mxp}, 
\begin{align*}
 \cm _{x}(\phi )(0)=-\phi _{0}+\min \left\{ 
 2\phi _{0},\cp (\phi )(t)+x
 \right\} .
\end{align*}
Since condition~\eqref{;mxbvc} entails $2\phi _{0}\le \cp (\phi )(t)+x$, we obtain 
assertion~\thetag{i}.

\thetag{ii} By \pref{;mxp}, 
\begin{align*}
 \cm _{x}(\phi )(t)&=-\cp (\phi )(t)+\min \left\{ 
 2\cp (\phi )(t),\cp (\phi )(t)+x
 \right\} \\
 &=-\cp (\phi )(t)+\cp (\phi )(t)+x\\
 &=x,
\end{align*}
where, for the second line, we have used $x\le \cp (\phi )(t)$ by 
condition~\eqref{;mxbvc}.
\end{proof}

\subsection{Proof of \tref{;tmain2}}\label{;prftmain2s2}

In this subsection, we prove \tref{;tmain2}. Fix a Borel-measurable subset 
$\Ga $ of $\R $ arbitrarily, and consider in \pref{;disint} the function 
$F$ on $\ctd{t}{}$ of the form 
\begin{align*}
 F(\phi )=\ind _{\Ga }(\phi _{t})G(\phi ),\quad \phi \in \ctd{t}{},
\end{align*}
with $G$ a bounded and continuous function. We then have, 
by noting the fact that, thanks to \lref{;mxbv}\thetag{ii}, 
$\cm _{x}(B)(t)=x$ a.s.\ on the event that $\cp (B)(t)\ge |x|$, 
\begin{align*}
 &\int _{\Ga }\frac{dx}{\sqrt{2\pi t}}\,\exp \left( 
 -\frac{x^{2}}{2t} 
 \right) \ex [G(\bb ^{x})]\\
 &=\int _{\Ga }dx\,\ex \!\left[ 
 \frac{1}{2\cp (B)(t)}G(\cm _{x}(B)) ;\,\cp (B)(t)\ge |x|
 \right] .
\end{align*}
Therefore, by the arbitrariness of $\Ga $, we have the relation 
\begin{align}\label{;rel}
 \frac{1}{\sqrt{2\pi t}}\,\exp \left( 
 -\frac{x^{2}}{2t} 
 \right) \ex [G(\bb ^{x})]
 =\ex \!\left[ 
 \frac{1}{2\cp (B)(t)}G(\cm _{x}(B)) ;\,\cp (B)(t)\ge |x|
 \right] ,
\end{align}
which holds true for all $x\in \R $, not for a.e.\ $x\in \R $, thanks to the 
boundedness and continuity of $G$. Indeed, both sides of \eqref{;rel} 
determine a continuous function in $x$, which is immediate from 
the identity in law 
\begin{align*}
 \bb ^{x}\eqd \left\{ 
 \bb ^{0}_{s}+\frac{x}{t}s
 \right\} _{0\le s\le t}
\end{align*}
(see \eqref{;bb}) as to the left-hand side, and from the 
definition~\eqref{;cmx} of $\cm _{x}$ as to the right-hand side. 
With relation~\eqref{;rel} at our disposal for every $x$ and $G$, 
the proof of \tref{;tmain2} follows once we show 

\begin{lem}\label{;mxpl}
For every pair $(x,\phi )\in \R \times \ctd{t}{}$ satisfying the 
condition~\eqref{;mxbvc} in \lref{;mxbv}, we have 
\begin{align}\label{;mxplq1}
 \cp (\cm _{x}(\phi ))=\cp (\phi );
\end{align}
in particular, 
\begin{align}\label{;mxplq2}
 \cm _{y}(\cm _{x}(\phi ))=\cm _{y}(\phi )
\end{align}
for all $y\in \R $.
\end{lem}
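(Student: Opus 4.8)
The plan is to dispose of the secondary claim~\eqref{;mxplq2} first and then devote all the work to~\eqref{;mxplq1}. The point is that \pref{;mxp} expresses $\cm _{y}$ of an argument \emph{only} through that argument's Pitman transform: for any $\psi \in \ctd{t}{}$,
\[
 \cm _{y}(\psi )(s)=-\cp (\psi )(s)+\min \Bigl\{ 2\min _{s\le u\le t}\cp (\psi )(u),\,\cp (\psi )(t)+y\Bigr\} .
\]
Hence, once \eqref{;mxplq1} is available, applying this formula to $\psi =\cm _{x}(\phi )$ and substituting $\cp (\cm _{x}(\phi ))=\cp (\phi )$ immediately gives $\cm _{y}(\cm _{x}(\phi ))=\cm _{y}(\phi )$ for every $y$, which is \eqref{;mxplq2}. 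So the whole lemma reduces to proving $\cp (\cm _{x}(\phi ))=\cp (\phi )$.

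To that end I would abbreviate $g:=\cp (\phi )$ and, via \pref{;mxp}, write $\psi :=\cm _{x}(\phi )$ in the form $\psi _{s}=h(s)-g(s)$ with $h(s):=\min \bigl\{ 2\min _{s\le u\le t}g(u),\,g(t)+x\bigr\} $. The function $h$ is continuous and nondecreasing, being the minimum of the nondecreasing map $s\mapsto 2\min _{[s,t]}g$ and a constant. Since $\cp (\psi )(s)=2\max _{0\le u\le s}\psi _{u}-\psi _{s}$, the desired identity $\cp (\psi )=g$ is equivalent to the single scalar statement
\[
 \max _{0\le u\le s}\psi _{u}=\tfrac{1}{2}h(s),\qquad 0\le s\le t .
\]
The upper bound here is the easy half: from $h(u)\le 2\min _{[u,t]}g\le 2g(u)$ one gets $\psi _{u}=h(u)-g(u)\le \tfrac{1}{2}h(u)$, and monotonicity of $h$ then yields $\psi _{u}\le \tfrac{1}{2}h(u)\le \tfrac{1}{2}h(s)$ for $u\le s$.

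The reverse inequality is where the content lies, and I expect it to be the main obstacle. I would set $u_{0}:=\max \{ u\in [0,s]:h(u)=2g(u)\} $. This set is closed (by continuity of $g$ and $h$) and nonempty: condition~\eqref{;mxbvc} forces $2\phi _{0}\le g(t)+x$, so, using $\min _{0\le u\le t}g=\phi _{0}$ (the case $s=0$ of \lref{;minp}), we get $h(0)=2\phi _{0}=2g(0)$, i.e.\ $0$ belongs to the set. At $u_{0}$ one has $\psi _{u_{0}}=h(u_{0})-g(u_{0})=\tfrac{1}{2}h(u_{0})$, so it suffices to prove $h(u_{0})=h(s)$, that is, that $h$ is constant on $[u_{0},s]$.

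The crux is the following property of the right running minimum $m(u):=\min _{[u,t]}g$: at any point where $m$ is strictly increasing one has $g(u)=m(u)$, i.e.\ the minimum of $g$ over $[u,t]$ is attained at the left endpoint. Granting this, I would argue by contradiction: if $h$ were not constant on $[u_{0},s]$, then $h$ would strictly increase just to the right of the last point $u_{2}\in [u_{0},s)$ with $h(u_{2})=h(u_{0})$; since $h=\min \{ 2m,\,g(t)+x\} $, an increase forces $h=2m$ (not the constant branch) with $m$ strictly increasing there, whence $g=m$ on that right-neighborhood by the running-minimum property, so $h=2m=2g$ there---contradicting the maximality of $u_{0}$. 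Therefore $h(u_{0})=h(s)$, giving $\max _{[0,s]}\psi =\tfrac{1}{2}h(s)$ and hence $\cp (\psi )(s)=h(s)-\psi _{s}=g(s)$, which is \eqref{;mxplq1}. The running-minimum property itself I would prove directly: if $m(u')>m(u)$ for all $u'>u$ near $u$, then for each such $u'$ a minimizer of $g$ over $[u,t]$ must lie in $[u,u')$, and compactness of the minimizer set, on letting $u'\downarrow u$, places a minimizer at $u$, so $g(u)=m(u)$.
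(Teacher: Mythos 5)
Your route is genuinely different from the paper's, which proves \eqref{;mxplq1} by Laplace's method (smoothing $\cm _{x}$ into the maps $\ctx $ of the Appendix and passing to the limit via \lref{;ctxl}); you instead argue directly on paths through \pref{;mxp}. Several of your steps are correct: the reduction of \eqref{;mxplq2} to \eqref{;mxplq1} via \pref{;mxp} is exactly the paper's final step; the upper bound $\max _{0\le u\le s}\psi _{u}\le \tfrac{1}{2}h(s)$ is right; and so is the nonemptiness of $\{ u\in [0,s]:h(u)=2g(u)\} $, via \eqref{;mxbvc} and \lref{;minp}. But the contradiction step contains a genuine gap.

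Your running-minimum property is pointwise: it asserts $g(u)=m(u)$ at a point $u$ such that $m(u')>m(u)$ for all $u'>u$ near $u$. The maximality of $u_{2}$ gives you precisely a statement of this pointwise kind at the single point $u_{2}$, namely $m(u')>m(u_{2})$ for all $u'\in (u_{2},s]$; it does \emph{not} make $m$ strictly increasing, as a function, on any right-neighborhood of $u_{2}$: plateaus of $m$ may accumulate at $u_{2}$ from the right, and on the interior of a plateau nothing forces $g=m$ (for instance, take $g$ equal to $m$ off the plateaus and equal to $m$ plus a small tent on each plateau; the future minimum of this $g$ is still $m$, yet $g>m$ on every plateau interior). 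Hence the inference ``$m$ strictly increasing there, whence $g=m$ on that right-neighborhood, so $h=2g$ there'' is unjustified. What your argument legitimately yields is the property applied at $u_{2}$ itself: $g(u_{2})=m(u_{2})$, so $h(u_{2})=2g(u_{2})$, so $u_{2}\le u_{0}$, i.e.\ $u_{2}=u_{0}$. That is not a contradiction; it only says $h>h(u_{0})$ on $(u_{0},s]$, which is exactly the scenario you still have to exclude.

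The repair stays within your framework, because points of pointwise strict increase of $m$ do exist strictly beyond $u_{0}$, as last hitting times of levels. Assume $h(s)>h(u_{0})$. Since $h(u_{0})=2g(u_{0})\ge 2m(u_{0})\ge h(u_{0})$, one has $h(u_{0})=2m(u_{0})$, and then $h(s)>2m(u_{0})$ forces both $m(s)>m(u_{0})$ and $g(t)+x>2m(u_{0})$. Pick $y$ with $m(u_{0})<y<\min \{ m(s),(g(t)+x)/2\} $, and set $v:=\max \{ u\in [u_{0},s]:m(u)=y\} $, which exists by the intermediate value theorem and continuity of $m$. Then $v>u_{0}$, and $m(u')>y=m(v)$ for every $u'>v$ (for $u'\in (v,s]$ by maximality of $v$ and monotonicity of $m$; for $u'>s$ because $m(u')\ge m(s)>y$), so your property gives $g(v)=m(v)$; moreover $2m(v)=2y<g(t)+x$ gives $h(v)=2m(v)=2g(v)$, contradicting the maximality of $u_{0}$. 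With this level-set argument substituted for the ``right-neighborhood of $u_{2}$'' step, your proof closes and yields an elementary alternative to the paper's Laplace-method argument.
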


The above lemma contains the assertions~\thetag{i} and \thetag{ii} of 
Proposition~3.3 in \cite{har24+}: those two are a special case in which 
$x=-\phi _{t}$ and $y=\phi _{t}$ with $\phi \in \ctd{t}{}$ such that 
$\phi _{0}=0$. We defer the proof of \lref{;mxpl} to the Appendix and 
proceed to the proof of \tref{;tmain2}. 

\begin{proof}[Proof of \tref{;tmain2}]
For every $x\in \R $, by standard density arguments, relation~\eqref{;rel} 
is extended to any nonnegative measurable function $G$. Now we let 
$x,y\in \R $ be such that $|x|\ge |y|$. We apply relation~\eqref{;rel} to 
$\bb ^{y}$ with a function $G$ of the form 
\begin{align*}
 G(\cm _{x}(\phi ))\ind _{[|x|,\infty )}(\cp (\phi )(t)),\quad \phi \in \ctd{t}{},
\end{align*}
where $G$ is again any nonnegative measurable function on $\ctd{t}{}$. 
Then we have 
\begin{equation}\label{;tmain2q3}
\begin{split}
 &\frac{1}{\sqrt{2\pi t}}\,\exp \left( 
 -\frac{y^{2}}{2t} 
 \right) \ex \!\left[ 
 G(\cm _{x}(\bb ^{y}));\,\cp (\bb ^{y})(t)\ge |x|
 \right] \\
 &=\ex \!\left[ 
 \frac{1}{2\cp (B)(t)}G\bigl( \cm _{x}(\cm _{y}(B))\bigr) ;\,\cp (B)(t)\ge \max \{ |x|,|y|\} 
 \right] \\
 &=\ex \!\left[ 
 \frac{1}{2\cp (B)(t)}G(\cm _{x}(B));\,\cp (B)(t)\ge |x| 
 \right] \\
 &=\frac{1}{\sqrt{2\pi t}}\,\exp \left( 
 -\frac{x^{2}}{2t} 
 \right) \ex [G(\bb ^{x})],
\end{split}
\end{equation}
where the third line follows from \eqref{;mxplq2} and $|x|\ge |y|$. 
Dividing the leftmost and rightmost sides of \eqref{;tmain2q3} 
by the identity 
\begin{align}\label{;rks}
 \frac{1}{\sqrt{2\pi t}}\,\exp \left( 
 -\frac{y^{2}}{2t} 
 \right) \pr \!\left( \cp (\bb ^{y})(t)\ge |x|\right) =\frac{1}{\sqrt{2\pi t}}\,\exp \left( 
 -\frac{x^{2}}{2t} 
 \right) ,
\end{align}
we obtain 
\begin{align*}
 \ex \!\left[ 
 G(\cm _{x}(\bb ^{y}))\rmid| \cp (\bb ^{y})(t)\ge |x|
 \right] =\ex [G(\bb ^{x})].
\end{align*}
We let $F$ be a nonnegative measurable function on $\ctd{t}{3}$ and take 
\begin{align*}
 G(\phi )=F(\phi ,\cp (\phi ),\cm _{y}(\phi )),\quad \phi \in \ctd{t}{},
\end{align*}
in the last relation. Then the left-hand side turns into 
\begin{align*}
 &\ex \!\left[ 
 F\bigl( 
 \cm _{x}(\bb ^{y}),\cp (\cm _{x}(\bb ^{y})),\cm _{y}(\cm _{x}(\bb ^{y}))
 \bigr) \rmid| \cp (\bb ^{y})(t)\ge |x|
 \right] \\
 &=\ex \!\left[ 
 F(\cm _{x}(\bb ^{y}),\cp (\bb ^{y}),\cm _{y}(\bb ^{y}))
 \rmid| \cp (\bb ^{y})(t)\ge |x|
 \right] 
\end{align*}
by \eqref{;mxplq1} and \eqref{;mxplq2}. The proof of the theorem 
is completed by noting that $\cm _{y}(\bb ^{y})=\bb ^{y}$ a.s.\ by 
\eqref{;iden}.
\end{proof}

\begin{rem}\label{;rrks}
\thetag{1} We refer to \cite[equation~\thetag{3.2}]{har24} for an analogue 
to relation~\eqref{;tmain2q3} in the framework of 
exponential functionals of Brownian motion.
 
\thetag{2} Since relation~\eqref{;rks} is rewritten as 
\begin{align*}
 \pr \!\left( 
 \max _{0\le u\le t}\bb ^{y}_{u}\ge \frac{|x|+y}{2}
 \right) 
 =\exp \left( 
 -\frac{2}{t}\cdot \frac{|x|+y}{2}\cdot \frac{|x|-y}{2}
 \right) 
\end{align*}
by the definition~\eqref{;cp} of $\cp $, the relation may be seen as 
a consequence of \cite[equation~\thetag{4.3.40}]{ks}; see also 
\cite[equation~\thetag{3.14}]{zam}.
\end{rem}

To apply \tref{;tmain2} to three-dimensional Bessel bridges, it is 
convenient to restate it in terms of the transformations $\cmb _{x}$ and 
$\cpb $, with a Brownian bridge of an arbitrary starting point. 
For this purpose, given $a,b\in \R $, let 
\begin{align*}
 \pbb{a}{b}:=\pr \circ \left( 
 \bb ^{b-a}+a
 \right) ^{-1},
\end{align*}
namely the probability measure $\pbb{a}{b}$ is the law of a Brownian 
bridge from $a$ to $b$ over $[0,t]$, under which we will write $\bb $ for 
the coordinate process in $\ctd{t}{}$: 
\begin{align*}
 \bb _{s}(\phi ):=\phi _{s},\quad 0\le s\le t,\ \phi \in \ctd{t}{}.
\end{align*}
Moreover, $\ebb{a}{b}$ stands for the expectation with respect to 
$\pbb{a}{b}$ below. \tref{;tmain2} is then restated as 

\let\temp\thethm 
\renewcommand{\thethm}{\ref{;tmain2}$'$}
\begin{thm}\label{;tmain2d}
Let $a,b,c\in \R $ be such that 
\begin{align}\label{;cabc}
 \min \{ a,b\} \le c\le \max \{ a,b\} . 
\end{align}
Then it holds that 
\begin{equation}\label{;genbq0}
\begin{split}
 &\ebb{a}{b}\!\left[ 
 F\bigl( \bb -a,\cpb (\bb )+a,\cmb _{a+b-2c}(\bb -a)\bigr) 
 \right] \\
 &=\ebb{c}{a+b-c}\!\left[ 
 F\bigl( 
 \cmb _{b-a}(\bb -c),\cpb (\bb )+c,\bb -c
 \bigr) \rmid| \min _{0\le u\le t}\bb _{u}\le \min \{ a,b\} 
 \right] 
\end{split}
\end{equation}
for any nonnegative measurable function $F$ on $\ctd{t}{3}$.
\end{thm}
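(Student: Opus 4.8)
The plan is to deduce Theorem~\ref{;tmain2d} directly from \tref{;tmain2} by two moves: first passing from the transformations $\cp ,\cm _{x}$ to their ``barred'' counterparts $\cpb ,\cmb _{x}$, and then shifting the starting point of the bridges from $0$ to general values by realizing $\pbb{a}{b}$ and $\pbb{c}{a+b-c}$ as laws of translated bridges $\bb ^{\cdot }+\text{const}$.

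First I would produce a barred version of \tref{;tmain2}. From $\cpb (\phi )=\cp (-\phi )$, $\cmb _{x}(\phi )=-\cm _{-x}(-\phi )$ and $\bb ^{-x}\eqd -\bb ^{x}$, I would apply \tref{;tmain2} with $x,y$ replaced by $-x,-y$ (which preserves the constraint since $|-x|\ge |-y|$ iff $|x|\ge |y|$), substitute $-\bb ^{x},-\bb ^{y}$ for the bridges, and negate the first and third coordinates of both sides. This yields: for $|x|\ge |y|$,
\begin{align*}
 \bigl( \bb ^{x},\cpb (\bb ^{x}),\cmb _{y}(\bb ^{x})\bigr)
 \eqd
 \bigl( \cmb _{x}(\bb ^{y}),\cpb (\bb ^{y}),\bb ^{y}\bigr)
\end{align*}
conditioned on $\cpb (\bb ^{y})(t)\ge |x|$. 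I would also record the translation covariances, read off directly from \eqref{;cpb} and \eqref{;cmbx}: for every $c\in \R $ one has $\cpb (\phi +c)=\cpb (\phi )-c$ and $\cmb _{x}(\phi +c)=\cmb _{x-c}(\phi )+c$ (only the first will be needed).

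With these at hand I would rewrite both sides of \eqref{;genbq0}. Since $\pbb{a}{b}=\pr \circ (\bb ^{b-a}+a)^{-1}$, the covariance for $\cpb $ turns the left-hand side into the expectation of $F\bigl( \bb ^{b-a},\cpb (\bb ^{b-a}),\cmb _{a+b-2c}(\bb ^{b-a})\bigr)$; likewise, writing $\pbb{c}{a+b-c}=\pr \circ (\bb ^{a+b-2c}+c)^{-1}$, the right-hand side becomes the conditional expectation of $F\bigl( \cmb _{b-a}(\bb ^{a+b-2c}),\cpb (\bb ^{a+b-2c}),\bb ^{a+b-2c}\bigr)$. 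I would then invoke the barred version above with $x=b-a$ and $y=a+b-2c$; the hypothesis $|x|\ge |y|$, i.e.\ $|b-a|\ge |a+b-2c|$, follows from $\min \{ a,b\} \le c\le \max \{ a,b\} $ by checking the two orderings of $a$ and $b$.

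The step demanding the most care is matching the two conditioning events, which is where the argument genuinely closes. The barred theorem conditions on $\cpb (\bb ^{a+b-2c})(t)\ge |b-a|$, whereas the right-hand side of \eqref{;genbq0} conditions on $\min _{0\le u\le t}\bb _{u}\le \min \{ a,b\} $ under $\pbb{c}{a+b-c}$. Using $\cpb (\phi )(t)=\phi _{t}-2\min _{0\le u\le t}\phi _{u}$ with $\bb ^{a+b-2c}_{t}=a+b-2c$, the former reads $\min _{0\le u\le t}\bb ^{a+b-2c}_{u}\le \tfrac{(a+b-2c)-|b-a|}{2}$, while the latter, after subtracting $c$, reads $\min _{0\le u\le t}\bb ^{a+b-2c}_{u}\le \min \{ a,b\} -c$. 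These coincide because $\min \{ a,b\} =\tfrac{a+b-|b-a|}{2}$, the identity that makes everything fit; both sides of \eqref{;genbq0} then reduce to the same conditional expectation, which proves the theorem.
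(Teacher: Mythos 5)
Your proposal is correct and follows essentially the same route as the paper's own proof: both derive the barred version of \tref{;tmain2} by the substitution $(x,y)\mapsto(-x,-y)$ together with $\bb ^{-x}\eqd -\bb ^{x}$, then apply it with $x=b-a$, $y=a+b-2c$ (checking $|x|\ge |y|$ from \eqref{;cabc}), transfer to the shifted bridge laws $\pbb{a}{b}$ and $\pbb{c}{a+b-c}$, and identify the conditioning event via $\min \{ a,b\} =\tfrac{a+b-|b-a|}{2}$. The only cosmetic difference is that you isolate the translation covariance $\cpb (\phi +c)=\cpb (\phi )-c$ as an explicit step, which the paper absorbs into the identification $\pr \circ (\bb ^{x})^{-1}=\pbb{a}{b}\circ (\bb -a)^{-1}$.
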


\let\thethm\temp 
\addtocounter{thm}{-1}

\begin{proof}
In what follows, $F$ is a generic nonnegative measurable function on 
$\ctd{t}{3}$ and may differ in different contexts. Let $x,y\in \R $ be 
such that $|x|\ge |y|$. First observe that, by \tref{;tmain2}, 
\begin{equation}\label{;genbq1}
\begin{split}
 &\ex \!\left[ 
 F\bigl( \bb ^{x},\cpb (\bb ^{x}),\cmb _{y}(\bb ^{x})\bigr) 
 \right] \\
 &=\ex \!\left[ 
 F\bigl( 
 \cmb _{x}(\bb ^{y}),\cpb (\bb ^{y}),\bb ^{y}
 \bigr) \rmid| \cpb (\bb ^{y})(t)\ge |x|
 \right] .
\end{split}
\end{equation}
Indeed, replacing $x$ and $y$ in \tref{;tmain2} by $-x$ and $-y$, respectively, 
we have 
\begin{align*}
 &\ex \!\left[ 
 F\bigl( \bb ^{-x},\cp (\bb ^{-x}),\cm _{-y}(\bb ^{-x})\bigr) 
 \right] \\
 &=\ex \!\left[ 
 F\bigl( 
 \cm _{-x}(\bb ^{-y}),\cp (\bb ^{-y}),\bb ^{-y}
 \bigr) \rmid| \cp (\bb ^{-y})(t)\ge |x|
 \right] .
\end{align*}
Noting that $\bb ^{-x}\eqd -\bb ^{x}$ as well as $\bb ^{-y}\eqd -\bb ^{y}$, and replacing 
$F$ by a function of the form 
\begin{align*}
 F(-\phi ^{1},\phi ^{2},-\phi ^{3}),\quad 
 (\phi ^{i})_{i=1}^{3}\in \ctd{t}{3},
\end{align*}
we obtain \eqref{;genbq1} in view of the definitions~\eqref{;cmbx} and \eqref{;cpb} 
of the transformations $\cmb _{x}$ and $\cpb $. For $a,b,c\in \R $ satisfying 
condition~\eqref{;cabc}, we are now going to take 
\begin{align*}
 x=b-a && \text{and} && y=a+b-2c,
\end{align*}
which is allowed since, under \eqref{;cabc}, we have $|x|\ge |y|$; indeed, 
\eqref{;cabc} is equivalent to 
\begin{align*}
 a+b-|b-a|\le 2c\le a+b+|b-a|.
\end{align*}
With the above choice of $x$ and $y$ in \eqref{;genbq1}, noting the relations 
\begin{align*}
 \pr \circ (\bb ^{x})^{-1}=\pbb{a}{b}\circ (\bb -a)^{-1}, && 
 \pr \circ (\bb ^{y})^{-1}=\pbb{c}{a+b-c}\circ (\bb -c)^{-1},
\end{align*}
we obtain 
\begin{equation*}
\begin{split}
 &\ebb{a}{b}\!\left[ 
 F\bigl( \bb -a,\cpb (\bb )+a,\cmb _{a+b-2c}(\bb -a)\bigr) 
 \right] \\
 &=\ebb{c}{a+b-c}\!\left[ 
 F\bigl( 
 \cmb _{b-a}(\bb -c),\cpb (\bb )+c,\bb -c
 \bigr) \rmid| \cpb (\bb )(t)+c\ge |b-a|
 \right] .
\end{split}
\end{equation*}
As to the right-hand side, observe that 
\begin{align*}
 \left\{ 
 \cpb (\bb )(t)+c\ge |b-a|
 \right\} 
 =\Bigl\{ 
 \min _{0\le u\le t}\bb _{u}\le \min \{ a,b\} 
 \Bigr\} ,\quad \text{$\pbb{c}{a+b-c}$-a.s.},
\end{align*}
since we have, $\pbb{c}{a+b-c}$-a.s.,     
\begin{align*}
 \cpb (\bb )(t)+c-|b-a|&=a+b-c-2\min _{0\le u\le t}\bb _{u}+c-|b-a|\\
 &=2\Bigl( 
 \min \{ a,b\} -\min _{0\le u\le t}\bb _{u}
 \Bigr) 
\end{align*}
by the definition of $\cpb $. Therefore \eqref{;genbq0} is proven.
\end{proof}

As an immediate corollary to \tref{;tmain2d}, we have \eqref{;conciseb} as below.

\begin{cor}\label{;cor3}
For every $a,b\in \R $, we have 
\begin{align*}
 \ebb{a}{b}\!\left[ 
 F\bigl( \bb ,\cq (\bb ),\cn (\bb )\bigr) 
 \right] 
 =\ebb{b}{a}\!\left[ 
 F\bigl( \cn (\bb ),\cq (\bb ),\bb \bigr) 
 \right] 
\end{align*}
for any nonnegative measurable function $F$ on $\ctd{t}{3}$.
\end{cor}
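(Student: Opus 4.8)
The plan is to specialize \tref{;tmain2d} to $c=b$. This choice satisfies condition~\eqref{;cabc} automatically, since $\min \{ a,b\} \le b\le \max \{ a,b\} $ for all $a,b\in \R $, and it collapses the reference law $\pbb{c}{a+b-c}$ on the right-hand side of \eqref{;genbq0} to $\pbb{b}{a}$; moreover the subscript $a+b-2c$ becomes $a-b$. Thus \eqref{;genbq0} reads, for every nonnegative measurable $\tilde F$ on $\ctd{t}{3}$,
\begin{align*}
&\ebb{a}{b}\!\left[ \tilde F\bigl( \bb -a,\cpb (\bb )+a,\cmb _{a-b}(\bb -a)\bigr) \right] \\
&=\ebb{b}{a}\!\left[ \tilde F\bigl( \cmb _{b-a}(\bb -b),\cpb (\bb )+b,\bb -b\bigr) \rmid| \min _{0\le u\le t}\bb _{u}\le \min \{ a,b\} \right] .
\end{align*}

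The only step requiring an argument is the removal of the conditioning, which I regard as the crux. Under $\pbb{b}{a}$ the coordinate process is a bridge from $b$ to $a$, so $\bb _{0}=b$ and $\bb _{t}=a$ a.s.; as the pathwise minimum of a continuous path never exceeds its values at the endpoints, $\min _{0\le u\le t}\bb _{u}\le \min \{ \bb _{0},\bb _{t}\} =\min \{ a,b\} $ holds identically. Hence the conditioning event has full $\pbb{b}{a}$-probability and may simply be dropped.

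Next I would convert the three coordinates on each side into $\cn $ and $\cq $ via their definitions \eqref{;cn} and~\eqref{;cq} together with the almost-sure boundary values of $\bb $. Under $\pbb{a}{b}$, where $\bb _{0}=a$ and $\bb _{t}=b$ a.s., one has $\cpb (\bb )+a=\cq (\bb )$ and, by \eqref{;cn}, $\cmb _{a-b}(\bb -a)=\cn (\bb )-b$; under $\pbb{b}{a}$, where $\bb _{0}=b$ and $\bb _{t}=a$ a.s., one has $\cpb (\bb )+b=\cq (\bb )$ and $\cmb _{b-a}(\bb -b)=\cn (\bb )-a$. Substituting these a.s. identities, the displayed relation becomes
\begin{align*}
\ebb{a}{b}\!\left[ \tilde F\bigl( \bb -a,\cq (\bb ),\cn (\bb )-b\bigr) \right] =\ebb{b}{a}\!\left[ \tilde F\bigl( \cn (\bb )-a,\cq (\bb ),\bb -b\bigr) \right] .
\end{align*}

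Finally, since $a$ and $b$ are fixed, I would absorb the deterministic shifts into the test function: given the nonnegative measurable $F$ of the statement, I apply the last identity with $\tilde F(\phi ^{1},\phi ^{2},\phi ^{3}):=F(\phi ^{1}+a,\phi ^{2},\phi ^{3}+b)$, the constants being added to the first and third coordinate functions. The left-hand side then reads $\ebb{a}{b}[ F( \bb ,\cq (\bb ),\cn (\bb )) ]$ and the right-hand side reads $\ebb{b}{a}[ F( \cn (\bb ),\cq (\bb ),\bb ) ]$, which is the assertion of \cref{;cor3}. In summary, the substantive input is the trivialization of the conditioning at $c=b$; everything else is bookkeeping with the boundary values and the definitions of $\cn $ and $\cq $.
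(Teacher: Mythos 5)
Your proposal is correct and follows essentially the same route as the paper's proof: take $c=b$ in Theorem~\ref{;tmain2d}, observe that the conditioning event $\bigl\{ \min _{0\le u\le t}\bb _{u}\le \min \{ a,b\} \bigr\} $ has full $\pbb{b}{a}$-probability because the minimum of a path never exceeds its endpoint values, and then absorb the shifts by $a$ and $b$ into the test function via the definitions~\eqref{;cn} and~\eqref{;cq}. Your intermediate identities $\cmb _{a-b}(\bb -a)=\cn (\bb )-b$ under $\pbb{a}{b}$ and $\cmb _{b-a}(\bb -b)=\cn (\bb )-a$ under $\pbb{b}{a}$ are exactly the bookkeeping the paper performs implicitly when it replaces $F$ by $F(\phi ^{1}+a,\phi ^{2},\phi ^{3}+b)$.
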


The fact that 
\begin{align*}
 \pbb{a}{b}\circ \bigl( \bbd \bigr) ^{-1}=\pbb{b}{a}
\end{align*}
entails \eqref{;conciseb}.

\begin{proof}[Proof of \cref{;cor3}]
Given $a,b\in \R $, we are allowed to take $c=b$ in \eqref{;genbq0}. 
Then we have, for any nonnegative measurable function $F$ on $\ctd{t}{3}$, 
\begin{equation*}
\begin{split}
 &\ebb{a}{b}\!\left[ 
 F\bigl( \bb -a,\cpb (\bb )+a,\cmb _{a-b}(\bb -a)\bigr) 
 \right] \\
 &=\ebb{b}{a}\!\left[ 
 F\bigl( 
 \cmb _{b-a}(\bb -b),\cpb (\bb )+b,\bb -b
 \bigr) \right] ,
\end{split}
\end{equation*}
since 
\begin{align*}
 \min _{0\le u\le t}\bb _{u}\le \min \{ a,b\} ,\quad \text{$\pbb{b}{a}$-a.s.}
\end{align*}
Replacing $F$ by a nonnegative measurable function of the form 
\begin{align*}
 F(\phi ^{1}+a,\phi ^{2},\phi ^{3}+b),\quad 
 (\phi ^{i})_{i=1}^{3}\in \ctd{t}{3},
\end{align*}
we have the claim by the definitions~\eqref{;cn} and \eqref{;cq} of 
the transformations $\cn $ and $\cq $. 
\end{proof}

\section{Proofs of \tref{;tmain1} and \csref{;cor1} and \ref{;cor2}}\label{;prftmain1}

In this section, we prove \tref{;tmain1} and its \csref{;cor1} and 
\ref{;cor2}. Counterparts to those corollaries in the case of Brownian motion are  
also discussed and some properties of the transformations $\cn $ and $\cq $ 
are summarized as well.

\subsection{Proof of \tref{;tmain1}}\label{;prftmain1s}
In the sequel, given $a,b\ge 0$, $\pbes{a}{b}$ denotes the 
law of a three-dimensional Bessel bridge from $a$ to $b$ over $[0,t]$, 
under which we write $\rho $ for the coordinate process in $\ctd{t}{}$: 
\begin{align*}
 \rho _{s}(\phi ):=\phi _{s},\quad 0\le s\le t,\ \phi \in \ctd{t}{}.
\end{align*}
We also write $\ebes{a}{b}$ for the expectation with respect to $\pbes{a}{b}$.
We will prove the theorem in a greater generality. 

\begin{thm}\label{;genr}
Given $a,b,c>0$ satisfying the condition~\eqref{;cabc} in \tref{;tmain2d}, 
we have 
\begin{equation}\label{;genrq0}
\begin{split}
 &\ebes{a}{b}\!\left[ 
 F\bigl( \rho -a,\cpb (\rho )+a,\cmb _{a+b-2c}(\rho -a)\bigr) 
 \right] \\
 &=\ebes{c}{a+b-c}\!\left[ 
 F\bigl( 
 \cmb _{b-a}(\rho -c),\cpb (\rho )+c,\rho -c
 \bigr) \rmid| \min _{0\le u\le t}\rho _{u}\le \min \{ a,b\} 
 \right] 
\end{split}
\end{equation}
for any nonnegative measurable function $F$ on 
$\ctd{t}{3}$.
\end{thm}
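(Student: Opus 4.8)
The plan is to derive the Bessel-bridge identity~\eqref{;genrq0} from its Brownian-bridge counterpart \tref{;tmain2d} by conditioning both sides to stay positive. The link between the two is the classical fact that, for $a,b>0$, a three-dimensional Bessel bridge from $a$ to $b$ over $[0,t]$ is a Brownian bridge from $a$ to $b$ conditioned to remain positive, i.e.\ $\pbes{a}{b}=\pbb{a}{b}\bigl( \,\cdot \mid \min _{0\le u\le t}\bb _{u}>0\bigr) $; indeed, the Doob $h$-transform density with $h(x)=x$ that turns Brownian motion killed at $0$ into the Bessel process telescopes, along any finite-dimensional marginal, to the constant $b/a$ once both endpoints are fixed, and this constant cancels in the bridge normalization. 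Since $a,b,c>0$ and $c\le \max \{ a,b\} <a+b$, every bridge occurring in~\eqref{;genrq0} — from $a$ to $b$ on the left and from $c$ to $a+b-c$ on the right — has strictly positive endpoints, so all the conditioning events below carry positive probability.

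The key observation is that the positivity of the underlying Brownian bridge is recorded, identically on both sides of \tref{;tmain2d}, in the terminal value of the common second component. By the definition~\eqref{;cpb} of $\cpb $, one has $\cpb (\bb )(t)=\bb _{t}-2\min _{0\le u\le t}\bb _{u}$; hence on the left-hand side of~\eqref{;genbq0}, where $\bb $ runs from $a$ to $b$, the second component satisfies $\bigl( \cpb (\bb )+a\bigr) (t)=a+b-2\min _{0\le u\le t}\bb _{u}$, while on the right-hand side, where $\bb $ runs from $c$ to $a+b-c$, the second component satisfies $\bigl( \cpb (\bb )+c\bigr) (t)=a+b-2\min _{0\le u\le t}\bb _{u}$ as well. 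Consequently, on both sides the positivity event is expressed by one and the same function of the second argument: $\{ \min _{0\le u\le t}\bb _{u}>0\} =\{ (\text{2nd component})(t)<a+b\} $.

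With this at hand, I would apply \tref{;tmain2d} to the test function $(\phi ^{1},\phi ^{2},\phi ^{3})\mapsto F(\phi ^{1},\phi ^{2},\phi ^{3})\ind _{\{ \phi ^{2}(t)<a+b\} }$, and separately to $(\phi ^{1},\phi ^{2},\phi ^{3})\mapsto \ind _{\{ \phi ^{2}(t)<a+b\} }$ (the instance $F\equiv 1$). On the left the inserted indicator becomes $\ind _{\{ \min \bb >0\} }$, so that $\ebb{a}{b}$ produces $\pbb{a}{b}(\min \bb >0)\,\ebes{a}{b}$ by the conditioning representation above; on the right the same indicator combines with the conditioning $\{ \min _{0\le u\le t}\bb _{u}\le \min \{ a,b\} \} $ already present in~\eqref{;genbq0} to give the event $\{ 0<\min _{0\le u\le t}\bb _{u}\le \min \{ a,b\} \} $, which, once the positivity conditioning is carried out, is exactly the Bessel-bridge event $\{ \min _{0\le u\le t}\rho _{u}\le \min \{ a,b\} \} $. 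Dividing the identity for $F$ by the identity for $F\equiv 1$ cancels every normalizing constant — in particular the conditioning denominator $\pbb{c}{a+b-c}(\min \bb \le \min \{ a,b\} )$ drops out of numerator and denominator — and identifying the two surviving ratios as the Bessel-bridge expectations $\ebes{a}{b}[\,\cdot \,]$ and $\ebes{c}{a+b-c}[\,\cdot \mid \min _{0\le u\le t}\rho _{u}\le \min \{ a,b\} \,]$ yields~\eqref{;genrq0} exactly.

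The heart of the argument, and its only real obstacle, is the encoding observation of the second paragraph: once one knows that the positivity of both Brownian bridges is captured by the identical functional $\{ (\text{2nd component})(t)<a+b\} $ of the shared middle coordinate, the insertion of this indicator into \tref{;tmain2d} is automatically legitimate, since that theorem holds for every nonnegative measurable $F$. The remaining work — rewriting the conditioned Brownian-bridge expectations as Bessel-bridge expectations and checking that the normalizing factors cancel, which is itself forced by the $F\equiv 1$ instance — is routine bookkeeping.
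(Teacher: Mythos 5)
Your proposal is correct and follows essentially the same route as the paper's own proof: insert the indicator $\ind _{\{ \phi ^{2}_{t}\le a+b\} }$ on the terminal value of the common second component into \tref{;tmain2d}, recognize it as the positivity event $\{ \min _{0\le u\le t}\bb _{u}\ge 0\} $ under both bridge laws, invoke the conditioned-Brownian-bridge representation of the three-dimensional Bessel bridge, and divide by the $F\equiv 1$ instance (the paper's identity~\eqref{;genrq1}, handled there via Bayes' rule) so that all normalizing constants cancel. The only differences are cosmetic (strict versus non-strict inequalities, which agree up to null sets, and a sketched $h$-transform justification in place of the paper's citations).
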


\begin{proof}
Since we have \eqref{;genbq0} for any nonnegative 
measurable function $F$ by the assumption on $a,b,c$, 
we replace $F$ by a function of the form 
\begin{align*}
 F(\phi ^{1},\phi ^{2},\phi ^{3})\ind _{(-\infty ,a+b]}(\phi ^{2}_{t}),\quad 
 (\phi ^{i})_{i=1}^{3}\in \ctd{t}{3},
\end{align*}
where $F$ is again any nonnegative measurable function on $\ctd{t}{3}$. 
Observe that, $\pbb{a}{b}$-a.s., 
\begin{align*}
 \left\{ 
 \cpb (\bb )(t)+a\le a+b
 \right\} &=\Bigl\{ 
 b-2\min _{0\le u\le t}\bb _{u}\le b
 \Bigr\} \\
 &=\Bigl\{ 
 \min _{0\le u\le t}\bb _{u}\ge 0
 \Bigr\} ,
\intertext{and that, $\pbb{c}{a+b-c}$-a.s.,}
 \left\{ 
 \cpb (\bb )(t)+c\le a+b
 \right\} &=\Bigl\{ 
 a+b-c-2\min _{0\le u\le t}\bb _{u}+c\le a+b
 \Bigr\} \\
 &=\Bigl\{ 
 \min _{0\le u\le t}\bb _{u}\ge 0
 \Bigr\} ,
\end{align*}
by the definition~\eqref{;cpb} of $\cpb$. Hence, with the above 
replacement of $F$, \eqref{;genbq0} becomes 
\begin{equation*}
\begin{split}
 &\ebb{a}{b}\!\left[ 
 F\bigl( \bb -a,\cpb (\bb )+a,\cmb _{a+b-2c}(\bb -a)\bigr) 
 ;\,\min _{0\le u\le t}\bb _{u}\ge 0\right] \\
 &=\ebb{c}{a+b-c}\!\left[ 
 F\bigl( 
 \cmb _{b-a}(\bb -c),\cpb (\bb )+c,\bb -c
 \bigr) ;\,\min _{0\le u\le t}\bb _{u}\ge 0\rmid| \min _{0\le u\le t}\bb _{u}\le \min \{ a,b\} 
 \right] .
\end{split}
\end{equation*}
Recall the well-known fact that, for every $a,b>0$, 
a Brownian bridge from $a$ to $b$ over $[0,t]$ conditioned 
to stay nonnegative has the law $\pbes{a}{b}$ (see, e.g., 
\cite[Lemma~3.22]{zam} and \cite[Lemma~5.2.8]{kni}). 
Dividing both sides of the last equality by the identity 
\begin{align}\label{;genrq1}
 \pbb{a}{b}\!\left( 
 \min _{0\le u\le t}\bb _{u}\ge 0
 \right) 
 =\pbb{c}{a+b-c}\!\left( 
 \min _{0\le u\le t}\bb _{u}\ge 0
 \rmid| \min _{0\le u\le t}\bb _{u}\le \min \{a, b\}
 \right) ,
\end{align}
we obtain \eqref{;genrq0} thanks to the above-mentioned fact. Here we used 
the rewriting of the right-hand side of \eqref{;genrq1} in the form 
\begin{align*}
 \pbes{c}{a+b-c}\!\left( 
 \min _{0\le u\le t}\bb _{u}\le \min \{ a,b\} 
 \right) 
 \frac{
 \pbb{c}{a+b-c}\!\left( 
 \min _{0\le u\le t}\bb _{u}\ge 0
 \right) 
 }{
 \pbb{c}{a+b-c}\!\left( 
 \min _{0\le u\le t}\bb _{u}\le \min \{ a,b\} 
 \right) 
 }
\end{align*}
by Bayes' rule. 
\end{proof}

\tref{;genr} entails \tref{;tmain1} readily. 

\begin{proof}[Proof of \tref{;tmain1}]
Suppose first that $a,b>0$. The argument is the same as in 
the proof of \cref{;cor3}: we are allowed to take $c=b$ in \tref{;genr}, 
whence, by noting that 
\begin{align*}
 \min _{0\le u\le t}\rho _{u}\le \min \{ a,b\} ,\quad \text{$\pbes{b}{a}$-a.s.},
\end{align*}
we have 
\begin{align*}
 &\ebes{a}{b}\!\left[ 
 F\bigl( 
 \rho -a,\cpb (\rho )+a,\cmb _{a-b}(\rho -a)
 \bigr) 
 \right] \\
 &=\ebes{b}{a}\!\left[ 
 F\bigl( 
 \cmb _{b-a}(\rho -b),\cpb (\rho )+b,\rho -b
 \bigr) 
 \right] 
\end{align*}
for every nonnegative measurable function $F$ on 
$\ctd{t}{3}$. Due to the fact that 
\begin{align}\label{;rev}
 \pbes{a}{b}\circ (\rd )^{-1}=\pbes{b}{a}\quad 
 \text{for any $a,b\ge 0$}
\end{align}
(see, e.g., \cite[Chapter~XI, Exercise~\thetag{3.7}]{ry}), we obtain 
the assertion of the theorem when $a,b>0$. To deal with the case 
$ab=0$, note that 
\begin{align*}
 \pbes{a}{b}\Rightarrow \pbes{a}{0}\quad (b\downarrow 0,a>0), && 
 \pbes{a}{b}\Rightarrow \pbes{0}{b}\quad (a\downarrow 0,b>0), && 
 \pbes{a}{0}\Rightarrow \pbes{0}{0}\quad (a\downarrow 0),
\end{align*}
where the notation $\Rightarrow $ stands for weak convergence. 
The first two are seen from \cite[Subsection~2.2]{cub} and \eqref{;rev}. 
We know that, for a three-dimensional Brownian bridge 
$\gamma =\left\{ 
\gamma _{s}=\bigl( \gamma ^{1}_{s},\gamma ^{2}_{s},\gamma ^{3}_{s}\bigr) 
\right\} _{0\le s\le t}$ over $[0,t]$ null at both ends, the process given by 
\begin{align*}
 \sqrt{\{ a(1-s/t)+\gamma ^{1}_{s}\} ^{2}+(\gamma ^{2}_{s})^{2}+(\gamma ^{3}_{s})^{2}},\quad 
 0\le s\le t,
\end{align*}
has the law $\pbes{a}{0}$ for every $a\ge 0$ (see, e.g., \cite{yz}), from 
which we conclude the third easily. 
Therefore the case $ab=0$ is proven by noting that the two 
functions $\cn $ and $\cq $ defined respectively by \eqref{;cn} and 
\eqref{;cq} are both continuous (in fact, easily seen to be Lipschitz 
continuous) on $\ctd{t}{}$. This completes the proof of the theorem.
\end{proof}

\begin{rem}\label{;rwc}
At the beginning of \cite[Chapter~XI, Section~3]{ry}, it is noted that 
the mapping 
\begin{align*}
 [0,\infty )^{2}\ni (a,b)\mapsto \pbes{a}{b}
\end{align*}
is continuous in the topology of weak convergence. However, we 
could not find a reference in which the above-mentioned continuity 
is proven and we have chosen to argue indirectly in the last proof to deal with 
the case $ab=0$.
\end{rem}

\subsection{Proofs of \csref{;cor1} and \ref{;cor2}}\label{;prfcors12}

We begin with some facts about the transformations $\cn $ and $\cq $ when they 
are restricted to nonnegative functions.

\begin{lem}\label{;fnq}
Given $\phi \in \ctd{t}{}$ such that $\phi _{s}\ge 0$ for all $0\le s\le t$, 
we have the following:
\begin{enumerate}[(i)]
\item when $\phi _{0}=0$, 
\begin{align*}
 \overbrace{\cn (\phi )}^{\circ }=\cq \bigl( \rphi \bigr) , && 
 \cq (\phi )=\phi ;
\end{align*}
\item when $\phi _{t}=0$, 
\begin{align*}
 \cn (\phi )=\cq (\phi ).
\end{align*}
\end{enumerate}
\end{lem}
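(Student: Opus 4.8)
The plan is to unwind the definitions of $\cn$ and $\cq$ from \eqref{;cnexpr} and \eqref{;cq} and exploit the nonnegativity hypothesis together with the boundary condition on $\phi$. The key observation throughout is that when $\phi \ge 0$ and one endpoint vanishes, the global minimum of $\phi$ is $0$ and is attained at that endpoint, which collapses the several $\min$-terms appearing in the transformations.

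For part \thetag{i}, assume $\phi_{0}=0$. Since $\phi \ge 0$, I have $\min_{0\le u\le s}\phi_{u}=0$ for every $s\in[0,t]$, because the infimum over an interval containing $0$ is bounded above by $\phi_{0}=0$ and below by $0$. First I would compute $\cq(\phi)$: from \eqref{;cpb} and \eqref{;cq}, $\cq(\phi)(s)=\phi_{s}-2\min_{0\le u\le s}\phi_{u}+\phi_{0}=\phi_{s}-0+0=\phi_{s}$, giving $\cq(\phi)=\phi$. Next, for the identity $\overbrace{\cn(\phi)}^{\circ}=\cq(\rphi)$, I would use the expression \eqref{;cnexpr}. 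With $\min_{0\le u\le s}\phi_{u}=0$ and $\phi_{0}=0$, \eqref{;cnexpr} simplifies to
\begin{align*}
 \cn(\phi)(s)=\phi_{s}+\Bigl|\phi_{t}-\min_{s\le u\le t}\phi_{u}\Bigr|-\Bigl|\min_{s\le u\le t}\phi_{u}\Bigr|
 =\phi_{s}+\Bigl(\phi_{t}-\min_{s\le u\le t}\phi_{u}\Bigr)-\min_{s\le u\le t}\phi_{u},
\end{align*}
where I have dropped the absolute values using $0\le\min_{s\le u\le t}\phi_{u}\le\phi_{t}$ (the right inequality holds because $\phi_{t}$ is one of the competitors in that minimum). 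Thus $\cn(\phi)(s)=\phi_{s}+\phi_{t}-2\min_{s\le u\le t}\phi_{u}$. Reading off the time reversal $\overbrace{\cn(\phi)}^{\circ}(s)=\cn(\phi)(t-s)=\phi_{t-s}+\phi_{t}-2\min_{t-s\le u\le t}\phi_{u}=\rphi_{s}+\rphi_{0}-2\min_{0\le v\le s}\rphi_{v}$ after the substitution $v=t-u$, and comparing with $\cq(\rphi)(s)=\rphi_{s}-2\min_{0\le v\le s}\rphi_{v}+\rphi_{0}$, the two agree.

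For part \thetag{ii}, assume $\phi_{t}=0$. Now nonnegativity forces $\min_{s\le u\le t}\phi_{u}=0$ for every $s$, since $\phi_{t}=0$ lies in each such interval. Substituting into \eqref{;cnexpr} gives
\begin{align*}
 \cn(\phi)(s)=\phi_{s}+\Bigl|\min_{0\le u\le s}\phi_{u}-\phi_{0}\Bigr|-\Bigl|\min_{0\le u\le s}\phi_{u}\Bigr|
\end{align*}
after using $\phi_{t}=0$ and $\min_{s\le u\le t}\phi_{u}=0$; wait, more carefully the term $\phi_{t}-\phi_{0}+\min_{0\le u\le s}\phi_{u}$ becomes $\min_{0\le u\le s}\phi_{u}-\phi_{0}$, which is $\le 0$ since $\min_{0\le u\le s}\phi_{u}\le\phi_{0}$, while $\min_{0\le u\le s}\phi_{u}\ge0$, so both absolute values resolve with definite signs to yield $\cn(\phi)(s)=\phi_{s}-\bigl(\min_{0\le u\le s}\phi_{u}-\phi_{0}\bigr)-\min_{0\le u\le s}\phi_{u}=\phi_{s}-2\min_{0\le u\le s}\phi_{u}+\phi_{0}$. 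Comparing with $\cq(\phi)(s)=\phi_{s}-2\min_{0\le u\le s}\phi_{u}+\phi_{0}$ from \eqref{;cpb} and \eqref{;cq}, the two coincide.

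I do not expect a serious obstacle here; the entire argument is bookkeeping with $\min$-terms and sign determinations inside the absolute values. The one point demanding care is the justification that each absolute value can be removed with a fixed sign, which in every case rests on the elementary inequalities $0\le\min_{s\le u\le t}\phi_{u}\le\phi_{t}$ and $0\le\min_{0\le u\le s}\phi_{u}\le\phi_{0}$ forced by nonnegativity and the relevant endpoint condition. Keeping the interval of the inner minimum straight when passing to the time reversal in part \thetag{i} (the substitution $v=t-u$) is the only place where a slip is easy, so I would write that change of variables out explicitly.
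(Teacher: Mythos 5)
Your proof is correct and follows essentially the same route as the paper's: both reduce $\cn$ via the expression \eqref{;cnexpr}, use nonnegativity plus the endpoint condition to kill the relevant running minimum and resolve the absolute values with definite signs (from $0\le\min_{s\le u\le t}\phi_{u}\le\phi_{t}$ in part \thetag{i} and $0\le\min_{0\le u\le s}\phi_{u}\le\phi_{0}$ in part \thetag{ii}), and then match the result against $\cq$ after time reversal. No gaps; the change of variables $v=t-u$ you spell out is exactly the step the paper performs implicitly in its display \eqref{;qinv}.
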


\begin{proof}
\thetag{i} By the assumption that $\phi $ is nonnegative, we have 
$\min _{0\le u\le s}\phi _{u}=0$ for all $0\le s\le t$. Therefore, 
from the expression~\eqref{;cnexpr} of $\cn $, we see that, for every $0\le s\le t$, 
\begin{align*}
 \cn (\phi )(s)&=\phi _{s}+\Bigl| 
 \phi _{t}-\min _{s\le u\le t}\phi _{u}\Bigr| -\min _{s\le u\le t}\phi _{u}\\
 &=\phi _{s}+\phi _{t}-2\min _{s\le u\le t}\phi _{u},
\end{align*}
whence 
\begin{equation}\label{;qinv}
\begin{split}
 \overbrace{\cn (\phi )}^{\circ }(s)&=\phi _{t-s}+\phi _{t}-2\min _{t-s\le u\le t}\phi _{u}\\
 &=\cq \bigl( \rphi \bigr) (s)
\end{split}
\end{equation}
by the definition~\eqref{;cq} of 
$\cq $. The latter is obvious from the definition of $\cq $.

\thetag{ii} In this case, we see that, for every $0\le s\le t$, 
\begin{align*}
 \cn (\phi )(s)&=\phi _{s}+\Bigl| 
 -\phi _{0}+\min _{0\le u\le s}\phi _{u}
 \Bigr| -\min _{0\le u\le s}\phi _{u}\\
 &=\phi _{s}+\phi _{0}-2\min _{0\le u\le s}\phi _{u},
\end{align*}
which is $\cq (\phi )(s)$ as claimed.
\end{proof}

For every $a\ge 0$, let $\pBES{a}$ denote the 
law of a three-dimensional Bessel process starting from $a$, 
under which we write $R$ for the coordinate process in $C([0,\infty );\R )$: 
\begin{align*}
 R_{s}(\phi ):=\phi _{s},\quad s\ge 0,\ \phi \in C([0,\infty );\R ).
\end{align*}

\begin{proof}[Proof of \cref{;cor1}]
Let $F$ be a nonnegative measurable function on $\ctd{t}{3}$. Then, by 
\tref{;tmain1}, or its restatement~\eqref{;concise}, we have, for every $b\ge 0$,
\begin{align*}
 \ebes{a}{b}\!\left[ 
 F\bigl( 
 \cn (\rd ),\cq (\rd ),\rd 
 \bigr) 
 \right] 
 =\ebes{a}{b}\!\left[ 
 F\bigl( 
 \rho ,\cq (\rho ),\cn (\rho )
 \bigr) 
 \right] .
\end{align*}
Integrating both sides with respect to the law $\pBES{a}(R_{t}\in db)$ over 
$[0,\infty )$, we have the former claim~\eqref{;cor1q1}. As for the latter, note that, 
$\pBES{0}$-a.s., $R_{s}\ge 0$ for all $s\ge 0$ and $\Rd _{t}=R_{0}=0$. 
Therefore we may apply \thetag{ii} (resp.\ \thetag{i}) of 
\lref{;fnq} to the left-hand (resp.\ right-hand) side of \eqref{;cor1q1}. 
Then, identity~\eqref{;cor1q1} reduces to 
\begin{align*}
 \Bigl( 
 \cq \bigl( \Rd \bigr) ,\,\Rd 
 \Bigr) \eqd 
 \Bigl( 
 R,\,\overbrace{\cq \bigl( \Rd \bigr) }^{\circ }
 \Bigr) .
\end{align*}
Noting the expression of $\cq \bigl( \Rd \bigr) $ as in \eqref{;qinv}, 
we also obtain the latter claim~\eqref{;cor1q2}. 
\end{proof}

We turn to the proof of \cref{;cor2}.
For every $\phi \in C([0,\infty );\R )$ satisfying 
\begin{align}\label{;conphi}
 \lim _{s\to \infty }\frac{\phi _{s}}{1+s}=0, 
\end{align}
set $\iota (\phi ):[0,1]\to \R $ by
\begin{align}\label{;defio}
 \iota (\phi )(s):=
 \begin{cases}
 (1-s)\phi _{s/(1-s)}, & 0\le s<1,\\
 0, & s=1.
 \end{cases}
\end{align}
Then, $\iota (\phi )\in C([0,1];\R )$ by assumption. We take $t=1$ below 
until the end of the proof of \cref{;cor2}.

\begin{lem}\label{;linv}
Suppose that $\phi \in C([0,\infty );\R )$ fulfills condition~\eqref{;conphi}. 
Then it holds that 
\begin{align}
 \iota (\phi )(s/(1+s))&=\frac{1}{1+s}\phi _{s}, \label{;linvq1}\\
 \cq (\iota (\phi ))(s/(1+s))&=\frac{1}{1+s}\phi _{s}-2\min _{0\le u\le s}
 \frac{\phi _{u}}{1+u}+\phi _{0}, \label{;linvq2}
\intertext{for all $s\ge 0$, and that}
 \overbrace{\iota (\phi )}^{\circ }(s/(1+s))
 &=\frac{s}{1+s}\phi _{1/s}, \label{;linvq3}\\
 \overbrace{\cq (\iota (\phi ))}^{\circ }(s/(1+s))
 &=\frac{s}{1+s}\phi _{1/s}-2\min _{0\le u\le 1/s}
 \frac{\phi _{u}}{1+u}+\phi _{0}, \label{;linvq4}
\end{align}
for all $s>0$.
\end{lem}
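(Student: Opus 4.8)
The plan is to verify the four displayed identities in \lref{;linv} directly from the definition~\eqref{;defio} of $\iota$, together with the definition~\eqref{;cq} of $\cq$ (equivalently~\eqref{;cpb} for $\cpb$) and the time-reversal convention~\eqref{;ts}. The key observation driving everything is that the substitution $r = s/(1+s)$ is a bijection of $[0,\infty)$ onto $[0,1)$ whose inverse is $r \mapsto r/(1-r)$; under it one has $1 - r = 1/(1+s)$ and $r/(1-r) = s$. These two bookkeeping relations are what convert the ``$\iota$-time'' $r$ back into the original time $s$.

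First I would establish~\eqref{;linvq1}. Setting $r = s/(1+s)$ so that $0 \le r < 1$ for $s \ge 0$, the first branch of~\eqref{;defio} gives
\begin{align*}
 \iota(\phi)(r) = (1-r)\phi_{r/(1-r)} = \frac{1}{1+s}\,\phi_{s},
\end{align*}
which is exactly~\eqref{;linvq1}. Next, for~\eqref{;linvq2}, I apply $\cq = \cpb + \phi_0$ (read off from~\eqref{;cq} and~\eqref{;cpb}, noting $\iota(\phi)(0) = \phi_0$) to the function $\iota(\phi)$ at time $r = s/(1+s)$:
\begin{align*}
 \cq(\iota(\phi))(r) = \iota(\phi)(r) - 2\min_{0\le v\le r}\iota(\phi)(v) + \phi_0.
\end{align*}
The first term is rewritten by~\eqref{;linvq1}, and for the minimum I change variables $v = u/(1+u)$, which is increasing and carries $[0,r]$ onto $[0,s]$; applying~\eqref{;linvq1} inside gives $\min_{0\le v\le r}\iota(\phi)(v) = \min_{0\le u\le s}\phi_u/(1+u)$, yielding~\eqref{;linvq2}.

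The time-reversed identities~\eqref{;linvq3} and~\eqref{;linvq4} are handled the same way, using that $\overbrace{\iota(\phi)}^{\circ}(r) = \iota(\phi)(1-r)$ by~\eqref{;ts} with $t=1$. The point for $s>0$ is that $1 - r = 1/(1+s)$, and this itself equals $\sigma/(1+\sigma)$ for $\sigma = 1/s$; thus $\overbrace{\iota(\phi)}^{\circ}(s/(1+s)) = \iota(\phi)(\sigma/(1+\sigma))$, and~\eqref{;linvq3} follows from~\eqref{;linvq1} applied at $\sigma = 1/s$ (giving the prefactor $1/(1+\sigma) = s/(1+s)$ and the argument $\phi_\sigma = \phi_{1/s}$). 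For~\eqref{;linvq4}, since $\overbrace{\cq(\iota(\phi))}^{\circ}(r) = \cq(\iota(\phi))(1-r)$, I evaluate~\eqref{;linvq2} at $\sigma = 1/s$ in place of $s$, which reproduces the prefactor $s/(1+s)$, the term $\phi_{1/s}$, the constant $\phi_0$, and the minimum $\min_{0\le u\le 1/s}\phi_u/(1+u)$.

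The only genuine care needed — and the step I expect to be the main obstacle — is the change of variables in the minima and the verification that the endpoint condition~\eqref{;conphi} makes all the expressions well defined: condition~\eqref{;conphi} guarantees $\iota(\phi)$ is continuous at $s=1$ with value $0$, so that the running minimum $\min_{0\le v\le r}\iota(\phi)(v)$ is attained and finite and the substitution $v = u/(1+u)$ legitimately exchanges $\min$ over $[0,r]$ with $\min$ over $[0,s]$. Once the monotonicity of $u \mapsto u/(1+u)$ and the two algebraic relations $1-r = 1/(1+s)$, $r/(1-r)=s$ are in hand, all four identities reduce to routine substitution.
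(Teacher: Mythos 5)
Your proposal is correct and follows essentially the same route as the paper: the first two identities are direct substitutions using $1-r=1/(1+s)$ and $r/(1-r)=s$, and the last two follow from them via time reversal through the same key identity $1-\frac{s}{1+s}=\frac{(1/s)}{1+(1/s)}$, i.e.\ replacing $s$ by $1/s$. The only difference is that you spell out the change of variables in the running minimum and the role of condition~\eqref{;conphi}, details the paper's proof leaves as ``immediate.''
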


\begin{proof}
Assertions~\eqref{;linvq1} and \eqref{;linvq2} are immediate from the 
definitions~\eqref{;defio} and \eqref{;cq} of $\iota (\phi )$ and $\cq $. 
By noting the definition~\eqref{;ts} of the operation of time reversal with 
$t=1$, and that 
\begin{align*}
 1-\frac{s}{1+s}=\frac{(1/s)}{1+(1/s)}
\end{align*}
for $s>0$, assertions~\eqref{;linvq3} and \eqref{;linvq4} are obtained 
from \eqref{;linvq1} and \eqref{;linvq2}, respectively, with replacing 
$s$ by $1/s$.
\end{proof}

\begin{proof}[Proof of \cref{;cor2}]
By the laws of the iterated logarithm for three-dimensional Bessel processes 
(see, e.g., \cite[p.~77]{bs}), 
\begin{align*}
 \pBES{a}\!\left( 
 \lim _{s\to \infty }\frac{R_{s}}{1+s}=0
 \right) =1.
\end{align*}
Hence $\iota (R)$ determines a continuous process over $[0,1]$, and we know 
from \eqref{;inver} that 
\begin{align}\label{;cor2q3}
 \pBES{a}\circ (\iota (R))^{-1}=\pbes{a}{0}.
\end{align}
Observe that, under $\pbes{a}{0}$, \tref{;tmain1}, or its restatement~\eqref{;concise}, 
reduces to 
\begin{align}\label{;cor2q4}
 \Bigl( 
 \overbrace{\cq (\rho )}^{\circ },\,
 \rd 
 \Bigr) \eqd 
 \left( \rho ,\,\cq (\rho )\right) 
\end{align}
by \lref{;fnq}. Indeed, for $\rd _{0}=0$, $\pbes{a}{0}$-a.s., we have 
\begin{align*}
 \cn \bigl( \rd \bigr) =\overbrace{\cq (\rho )}^{\circ }, && 
 \cq \bigl( \rd \bigr) =\rd ,
\end{align*}
$\pbes{a}{0}$-a.s., by \lref{;fnq}\thetag{i}, and for $\rho _{1}=0$, $\pbes{a}{0}$-a.s., 
we also have 
\begin{align*}
 \cn (\rho )=\cq (\rho ),\quad \text{$\pbes{a}{0}$-a.s.,}
\end{align*} 
by \lref{;fnq}\thetag{ii}.
Now combining \eqref{;cor2q3} and \eqref{;cor2q4} leads to 
\begin{align*}
 \Bigl( 
 \overbrace{\cq (\iota (R))}^{\circ },\,\overbrace{\iota (R)}^{\circ }
 \Bigr) 
 \eqd 
 \bigl( 
 \iota (R),\,\cq (\iota (R))
 \bigr) 
\end{align*}
and the assertion of \cref{;cor2} follows from \lref{;linv}.
\end{proof}

It is of interest to apply similar reasoning to the above proofs of 
\csref{;cor1} and \ref{;cor2} to Brownian motion. For every $a\in \R $, 
define $\pr _{a}$ to be the law of a one-dimensional Brownian motion 
starting from $a$, under which we write $B$ for the coordinate process 
in $C([0,\infty );\R )$: 
\begin{align*}
 B_{s}(\phi ):=\phi _{s},\quad s\ge 0,\ \phi \in C([0,\infty );\R ).
\end{align*}
By identity~\eqref{;conciseb}, we see that 
\begin{align}
 \left( 
 \cn \bigl( \mathring{B}\bigr) ,\,
 \cq \bigl( \mathring{B}\bigr) ,\,
 \mathring{B}
 \right) &\eqd 
 \left( 
 B,\,\cq (B),\,\cn (B)
 \right) ,\label{;bts}
\intertext{as well as that, with $t=1$,}
 \biggl( 
 \cn \Bigl( 
 \overbrace{\iota (B)}^{\circ }
 \Bigr) ,\,\cq \Bigl( 
 \overbrace{\iota (B)}^{\circ }
 \Bigr) ,\,\overbrace{\iota (B)}^{\circ }
 \biggr) 
 &\eqd 
 \bigl( 
 \iota (B),\,\cq (\iota (B)),\,\cn (\iota (B))
 \bigr) .\label{;binv}
\end{align}
For every $a\in \R $, note that, under $\pr _{a}$, the 
following two identities in law hold by the time-reversal and 
time-inversion of Brownian motion:
\begin{align*}
 \mathring{B}&\eqd \{ B_{s}-B_{t}+a\} _{0\le s\le t},\\
 \{ sB_{1/s}\} _{s>0}&\eqd 
 \{ -B_{s}+a(1+s)\} _{s>0}.
\end{align*}
If we apply the former identity to \eqref{;bts}, then we obtain  
\begin{align}\label{;btsd}
 \left( 
 \cn (B)-B_{t}+a,\,\cpb (B),\,B
 \right) 
 \eqd 
 \left( 
 B,\,\cpb (B),\,\cn (B)-B_{t}+a
 \right) ,
\end{align}
the case $a=0$ of which recovers \cite[Corollary~1.3]{har24+} with 
$\mu =0$ in view of the symmetry of Brownian motion. On the other hand, 
if we apply the latter to \eqref{;binv}, we obtain \tref{;binvthm} below; 
details are left to the reader. 

\begin{thm}\label{;binvthm}
For every $\phi \in C([0,\infty );\R )$ satisfying $\lim _{s\to \infty }\phi _{s}=0$, 
set 
\begin{align*}
 \cs (\phi )(s):=
 \phi _{s}-\phi _{0}+\Bigl| -\phi _{0}+
 \min _{0\le u\le s}\phi _{u}-\inf _{u\ge s}\phi _{u}
 \Bigr| 
 -\Bigl| 
 \min _{0\le u\le s}\phi _{u}-\inf _{u\ge s}\phi _{u}
 \Bigr| ,\quad s\ge 0.
\end{align*}
Define the process $Y=\{ Y_{s}\} _{s\ge 0}$ by  
\begin{align*}
 Y_{s}:=\frac{B_{s}}{1+s},\quad s\ge 0.
\end{align*}
Then, for every $a\in \R $, it holds that 
\begin{align*}
 \left( 
 \cs (-Y),\,\cq (-Y),\,-Y
 \right) 
 \eqd 
 \left( 
 Y,\,\cq (Y),\,\cs (Y)
 \right) 
\end{align*}
under $\pr _{a}$.
\end{thm}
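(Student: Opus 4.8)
The plan is to deduce \tref{;binvthm} from the already-established identity~\eqref{;binv}, exactly as the text announces (``details are left to the reader''). The strategy is to substitute the time-inversion identity in law for Brownian motion into \eqref{;binv} and then translate the composite transformations $\cn \circ \smash{\overbrace{\iota(\cdot)}^{\circ}}$ and $\cq \circ \smash{\overbrace{\iota(\cdot)}^{\circ}}$ into the explicit operators $\cs$, $\cq$ applied to $-Y$ and $Y$. First I would record that, under $\pr _{a}$, one has $\{ sB_{1/s}\} _{s>0}\eqd \{ -B_{s}+a(1+s)\} _{s>0}$, so that the time-inverted process $\smash{\overbrace{\iota(B)}^{\circ}}$ has the same law as the image of $-Y$ (plus the affine correction $a(1+s)$) under a suitable change of time variable; the key bookkeeping is the substitution $s\mapsto s/(1+s)$ from \lref{;linv}, which converts functions on $[0,1]$ back to functions on $[0,\infty)$.

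The central computation is a time-change analogue of \lref{;linv} adapted to $\cs$: I would verify that if $\phi\in C([0,\infty);\R)$ satisfies $\lim_{s\to\infty}\phi_s=0$ and $\iota(\phi)$ is its transform~\eqref{;defio} on $[0,1]$, then
\begin{align*}
 \cn \Bigl( \overbrace{\iota (\phi )}^{\circ }\Bigr) (s/(1+s))
 &=\frac{1}{1+s}\,\cs (\phi )(s),\\
 \cq \Bigl( \overbrace{\iota (\phi )}^{\circ }\Bigr) (s/(1+s))
 &=\frac{1}{1+s}\Bigl( \phi _{s}-\phi _{0}+\cdots \Bigr) ,
\end{align*}
for $s\ge 0$, where the right-hand sides arise from substituting the time-reversal identities of \lref{;linv} into the explicit formula~\eqref{;cnexpr} for $\cn$ and the definition~\eqref{;cq} for $\cq$. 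The minimum over $[0,s]$ and the running minimum over $[s,t]$ appearing in $\cn$ get converted, under the reversal and the reparametrization $u\mapsto u/(1+u)$, into $\min_{0\le u\le s}\phi_u/(1+u)$ and $\inf_{u\ge s}\phi_u/(1+u)$; tracking these limits of minimization through the time-inversion is where the $\inf_{u\ge s}$ in the definition of $\cs$ comes from. The affine factor $(1+s)$ is then absorbed precisely as in the passage from \cref{;cor2}'s first display to its second.

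With these identities in hand, I would feed $\phi = B$ (which satisfies $\lim_{s\to\infty}B_s=0$ only after the time-inversion, so one works instead with the inverted process as in the proof of \cref{;cor2}) into \eqref{;binv}, multiply through by the deterministic function $1+s$, and reindex by $s/(1+s)$. Combining this with the time-inversion identity in law for $B$ under $\pr_a$ and the affine shift $a(1+s)$ yields the three-dimensional identity
\begin{align*}
 \left( \cs (-Y),\,\cq (-Y),\,-Y\right)
 \eqd
 \left( Y,\,\cq (Y),\,\cs (Y)\right)
\end{align*}
claimed in the theorem. The main obstacle I anticipate is purely notational rather than conceptual: correctly propagating the time-reversal operation $\smash{\overbrace{\,\cdot\,}^{\circ}}$ and the reparametrization through the nested extrema in $\cn$, and confirming that the running infimum $\inf_{u\ge s}$ over an unbounded interval is the correct image of the finite-horizon minimum $\min_{s\le u\le t}$ under time-inversion. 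Since all transformations are Lipschitz continuous on the relevant function spaces and the convergence $\phi_s/(1+s)\to 0$ guarantees $\iota(\phi)$ extends continuously to $s=1$, no additional approximation or limiting argument beyond those already used for \cref{;cor2} should be required.
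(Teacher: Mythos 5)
Your overall route is the one the paper intends (the paper obtains \tref{;binvthm} by applying the time-inversion identity $\{sB_{1/s}\}_{s>0}\eqd\{-B_s+a(1+s)\}_{s>0}$ to \eqref{;binv}, leaving details to the reader), but your ``central computation'' is false as displayed, and the proof would break exactly there. Take $\phi_s=1/(1+s)$, which satisfies $\lim_{s\to\infty}\phi_s=0$; then $\iota(\phi)(r)=(1-r)^2$ and $\overbrace{\iota(\phi)}^{\circ}(r)=r^2$, so at $s=1$ (i.e.\ $r=1/2$) formula \eqref{;cnexpr} gives
\begin{align*}
 \cn\Bigl(\overbrace{\iota(\phi)}^{\circ}\Bigr)(1/2)
 =\tfrac14+\bigl|1+0-\tfrac14\bigr|-\bigl|0-\tfrac14\bigr|=\tfrac34,
 \qquad\text{while}\qquad
 \tfrac{1}{1+s}\,\cs(\phi)(s)\Big|_{s=1}
 =\tfrac12\bigl(-\tfrac12+\tfrac12-\tfrac12\bigr)=-\tfrac14 .
\end{align*}
The structural error is that $\cs$ must act on the \emph{rescaled} path $\hat\phi_u:=\phi_u/(1+u)$ (probabilistically on $Y$, which is precisely why the theorem reads $\cs(Y)$, $\cs(-Y)$), there is no prefactor $1/(1+s)$, and an additive constant appears. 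The correct identity, needed for the right-hand side of \eqref{;binv}, is
\begin{align*}
 \cn(\iota(\phi))\bigl(s/(1+s)\bigr)=\cs\bigl(\hat\phi\bigr)(s)+\phi_0,
 \qquad
 \cq(\iota(\phi))\bigl(s/(1+s)\bigr)=\cq\bigl(\hat\phi\bigr)(s),
\end{align*}
valid whenever $\lim_{s\to\infty}\hat\phi_s=0$. Your own prose (the extrema become $\min_{0\le u\le s}\phi_u/(1+u)$ and $\inf_{u\ge s}\phi_u/(1+u)$) is consistent with this and contradicts your displays; note also that the hypothesis $\lim_{s\to\infty}\phi_s=0$ is relevant for the arguments $\pm Y$ of $\cs$, not for $B$.

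The left-hand side of \eqref{;binv} needs a separate formula, because the reversed bridge starts at $0$ while its rescaled path $V_s:=\overbrace{\iota(B)}^{\circ}\bigl(s/(1+s)\bigr)=\tfrac{s}{1+s}B_{1/s}$ tends to $a$, not $0$, at infinity: one finds
\begin{align*}
 \cn\Bigl(\overbrace{\iota(B)}^{\circ}\Bigr)\bigl(s/(1+s)\bigr)
 =V_s+\Bigl|a+\min_{0\le u\le s}V_u-\inf_{u\ge s}V_u\Bigr|
 -\Bigl|\min_{0\le u\le s}V_u-\inf_{u\ge s}V_u\Bigr|,
 \qquad
 \cq\Bigl(\overbrace{\iota(B)}^{\circ}\Bigr)\bigl(s/(1+s)\bigr)=\cq(V)(s).
\end{align*}
Substituting the time-inversion identity $\{V_s\}_{s\ge0}\eqd\{-Y_s+a\}_{s\ge0}$ under $\pr_a$ turns the first expression into $\cs(-Y)(s)$ and the second into $\cq(-Y+a)=\cq(-Y)$, since $\cq$ is invariant under addition of constants. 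Hence \eqref{;binv}, reindexed by $s/(1+s)$, yields
\begin{align*}
 \bigl(\cs(-Y),\,\cq(-Y),\,-Y+a\bigr)\eqd\bigl(Y,\,\cq(Y),\,\cs(Y)+a\bigr),
\end{align*}
and the final step your proposal omits --- stripping the common deterministic constant $a$ from the two third components, a measurable bijection --- gives the theorem. Once the identities are corrected there is nothing left to ``multiply through by $1+s$'': the statement already lives at the level of $Y$, and that multiplication would produce an equivalent but different-looking identity, not the one claimed.
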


The above theorem entails in particular the following invariance 
in law of the standard Brownian motion: under $\pr _{0}$,
\begin{equation*}
\begin{split}
 \{ B_{s}\} _{s\ge 0}
 \eqd \Biggl\{ 
 &B_{s}+a(1-s)+(1+s)\left| 
 -a+\min _{0\le u\le s}\frac{a+B_{u}}{1+u}-\inf _{u\ge s}\frac{a+B_{u}}{1+u}
 \right| \\
 &-(1+s)\left| 
 \min _{0\le u\le s}\frac{a+B_{u}}{1+u}-\inf _{u\ge s}\frac{a+B_{u}}{1+u}
 \right| 
 \Biggr\} _{s\ge 0}
\end{split}
\end{equation*}
for any $a\in \R $. Noting that 
\begin{align*}
 \inf _{u\ge 0}\frac{a+B_{u}}{1+u}\le 0,\quad \text{$\pr _{0}$-a.s.},
\end{align*}
because, in view of the laws of the iterated logarithm of Brownian motion, 
$\lim _{u\to \infty }(a+B_{u})/(1+u)=0$, $\pr _{0}$-a.s., we see that 
the process on the right-hand side of the above identity in law does 
start from the origin: 
\begin{align*}
 &a+\left| 
 -a+a-\inf _{u\ge 0}\frac{a+B_{u}}{1+u}
 \right| -\left| 
 a-\inf _{u\ge 0}\frac{a+B_{u}}{1+u}
 \right| \\
 &=a-\inf _{u\ge 0}\frac{a+B_{u}}{1+u}-\left( 
 a-\inf _{u\ge 0}\frac{a+B_{u}}{1+u}
 \right) \\
 &=0,
\end{align*}
$\pr _{0}$-a.s.

\subsection{Properties of $\cn $ and $\cq $}\label{;sspnq}

We summarize here some properties of the transformations $\cn$ and $\cq $ 
as announced in \sref{;intro}. 

\begin{prop}\label{;pnq}
The transformations $\cn $ and $\cq $ defined respectively by 
\eqref{;cn} and \eqref{;cq} enjoy the following properties: 
\begin{enumerate}[(i)]{}
\item for every $\phi \in \ctd{t}{}$,
\begin{align*}
 \cn (\phi )(0)=\phi _{t}, && \cn (\phi )(t)=\phi _{0};
\end{align*}

\item it holds that 
\begin{align*}
\cn \circ \cn =\id && \text{and}  && \cq \circ \cn =\cq , 
\end{align*}
where 
$\id $ is the identity map on $\ctd{t}{}$ as in \sref{;intro}; 

\item for every $\phi \in \ctd{t}{}$, 
\begin{align*}
 \overbrace{\cn (\phi )}^{\circ }=\cn \bigl( \rphi \bigr) .
\end{align*}
\end{enumerate}
\end{prop}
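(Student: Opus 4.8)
The plan is to prove the three stated properties of $\cn$ and $\cq$ directly from the explicit expression~\eqref{;cnexpr} of $\cn$, using \lref{;mxpl} as the main structural tool. I would organize the proof as three short parts, treating the boundary values first, then the involution and $\cq$-invariance together, and finally the time-reversal commutation.

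First I would establish property~\thetag{i}. Evaluating the expression~\eqref{;cnexpr} at $s=0$ gives $\cn(\phi)(0)=\phi_{0}+\bigl|\phi_{t}-\phi_{0}+\phi_{0}-\min_{0\le u\le t}\phi_{u}\bigr|-\bigl|\phi_{0}-\min_{0\le u\le t}\phi_{u}\bigr|$, using $\min_{0\le u\le 0}\phi_{u}=\phi_{0}$. Since $\phi_{0}\ge\min_{0\le u\le t}\phi_{u}$, the second absolute value equals $\phi_{0}-\min_{0\le u\le t}\phi_{u}$, and since $\phi_{t}\ge\min_{0\le u\le t}\phi_{u}$ the first equals $\phi_{t}-\min_{0\le u\le t}\phi_{u}$; these combine to yield $\cn(\phi)(0)=\phi_{t}$. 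The evaluation at $s=t$ is symmetric, using $\min_{t\le u\le t}\phi_{u}=\phi_{t}$, and gives $\cn(\phi)(t)=\phi_{0}$. Alternatively, property~\thetag{i} can be read off from \lref{;mxbv}\thetag{i}--\thetag{ii} applied to the definition~\eqref{;cn} of $\cn$ through $\cmb_{x}$, once the sign-conventions relating $\cmb_{x}$ to $\cm_{-x}(-\,\cdot\,)$ are unwound; I would present whichever is cleaner.

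Next, for property~\thetag{ii}, the key observation is that by the definition~\eqref{;cn}, $\cn(\phi)$ is obtained from $\phi$ by applying $\cmb_{\phi_{0}-\phi_{t}}$ to the shifted path $\phi-\phi_{0}$ and adding $\phi_{t}$; unwinding $\cmb_{x}(\psi)=-\cm_{-x}(-\psi)$, this reduces to the behaviour of $\cm_{x}$ under iteration. The crucial input is \lref{;mxpl}, which states $\cp(\cm_{x}(\psi))=\cp(\psi)$ and hence $\cm_{y}(\cm_{x}(\psi))=\cm_{y}(\psi)$ whenever the pair $(x,\psi)$ satisfies~\eqref{;mxbvc}. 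The plan is to verify that the relevant pair meets condition~\eqref{;mxbvc}---here one uses property~\thetag{i} together with the a.s.-type bound $|x-\psi_{0}|\le\cp(\psi)(t)-\psi_{0}$, which holds deterministically by the remark following \lref{;mxbv} combined with the boundary values---and then to read off both $\cn\circ\cn=\id$ and $\cq\circ\cn=\cq$. The latter follows because $\cq=\cpb(\,\cdot\,)+(\,\cdot\,)_{0}$ and $\cpb$ is the negative-sign analogue of $\cp$, so that the $\cp$-invariance~\eqref{;mxplq1} transfers directly to $\cpb$-invariance under $\cn$, while the boundary value $\cn(\phi)(0)=\phi_{t}$ and the structure of $\cq$ handle the additive $(\,\cdot\,)_{0}$ term. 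I expect this step to be the main obstacle: the sign bookkeeping in translating \lref{;mxpl} (stated for $\cm_{x}$ and $\cp$) into a statement about $\cn$, $\cmb_{x}$, and $\cpb$ is where errors are easy to make, and one must check carefully that condition~\eqref{;mxbvc} is genuinely satisfied for the shifted and reflected path rather than merely plausible.

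Finally, for property~\thetag{iii}, I would argue directly from expression~\eqref{;cnexpr}. Writing $\rphi_{s}=\phi_{t-s}$ and substituting, one has $\min_{0\le u\le s}\rphi_{u}=\min_{t-s\le u\le t}\phi_{u}$ and $\min_{s\le u\le t}\rphi_{u}=\min_{0\le u\le t-s}\phi_{u}$, while $\rphi_{t}-\rphi_{0}=\phi_{0}-\phi_{t}=-(\phi_{t}-\phi_{0})$. Plugging these into~\eqref{;cnexpr} for $\cn(\rphi)(s)$ and comparing with $\overbrace{\cn(\phi)}^{\circ}(s)=\cn(\phi)(t-s)$ (again via~\eqref{;cnexpr}, now evaluated at $t-s$), the two swapped minima and the overall sign reversal of $\phi_{t}-\phi_{0}$ make the absolute-value terms coincide, using only that $|{-c}|=|c|$ for the first absolute value; the linear term $\phi_{t-s}$ matches trivially. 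This is a purely algebraic verification requiring no appeal to \lref{;mxpl}, so I would relegate the routine substitution to a single displayed computation. In all three parts the one genuinely delicate point remains the condition-checking for \lref{;mxpl} in part~\thetag{ii}; everything else is direct manipulation of the closed-form expression~\eqref{;cnexpr}.
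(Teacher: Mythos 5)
Your route coincides with the paper's own proof: part (i) by evaluating \eqref{;cnexpr} at $s=0$ and $s=t$, part (iii) by the substitutions $\min_{0\le u\le s}\rphi _{u}=\min_{t-s\le u\le t}\phi _{u}$, $\min_{s\le u\le t}\rphi _{u}=\min_{0\le u\le t-s}\phi _{u}$ together with $|-c|=|c|$, and part (ii) by rewriting $\cn $ through $\cm $ and invoking \lref{;mxpl}. Parts (i) and (iii) are complete and correct as you state them.

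The gap is in part (ii), exactly at the point you yourself flag as delicate: the verification of condition \eqref{;mxbvc}. Your stated justification---that the bound $|x-\psi _{0}|\le \cp (\psi )(t)-\psi _{0}$ ``holds deterministically by the remark following \lref{;mxbv} combined with the boundary values''---is circular and does not work: that bound \emph{is} condition \eqref{;mxbvc}, and the remark after \lref{;mxbv} only shows that its right-hand side is nonnegative, which is strictly weaker; property (i) likewise plays no role in this check (it enters only later, to express the outer application of $\cn $ in terms of $\cm _{\phi _{0}-\phi _{t}}$). What is actually needed, and what the paper supplies, is the following short computation: writing $\psi =\phi _{0}-\phi $ (so $\psi _{0}=0$) and $x=\phi _{t}-\phi _{0}$, one has $\cn (\phi )=-\cm _{x}(\psi )+\phi _{t}$ and $\cq (\phi )=\cp (\psi )$, and condition \eqref{;mxbvc} for the pair $(x,\psi )$ reads $|\phi _{t}-\phi _{0}|\le \cp (\psi )(t)=\phi _{0}+\phi _{t}-2\min _{0\le u\le t}\phi _{u}$, which holds because $2\min _{0\le u\le t}\phi _{u}\le 2\min \{ \phi _{0},\phi _{t}\} =\phi _{0}+\phi _{t}-|\phi _{t}-\phi _{0}|$. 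A second, smaller omission: \lref{;mxpl} alone does not finish $\cn \circ \cn =\id $. After \eqref{;mxplq2} reduces the iterate to $-\cm _{\phi _{0}-\phi _{t}}(\phi _{0}-\phi )+\phi _{0}$, you still need identity \eqref{;iden}, i.e.\ $\cm _{\psi _{t}}(\psi )=\psi $, to conclude that this equals $\phi $; your proposal never invokes \eqref{;iden}. With these two points supplied, your argument becomes the paper's proof verbatim.
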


Note the following expressions of the two transformations by the 
definitions~\eqref{;cmbx} and \eqref{;cpb} of $\cmb _{x}$ and $\cpb $: 
\begin{align}
 \cn (\phi )&=-\cm _{\phi _{t}-\phi _{0}}(\phi _{0}-\phi )+\phi _{t}, \label{;xcn}\\
 \cq (\phi )&=\cp (\phi _{0}-\phi ), \label{;xcq}
\end{align}
for every $\phi \in \ctd{t}{}$.

\begin{proof}[Proof of \pref{;pnq}]
\thetag{i} By the fact that $\min _{0\le u\le t}\phi _{u}\le \min \{ \phi _{0},\phi _{t}\} $, 
expression \eqref{;cnexpr} entails that 
\begin{align*}
 \cn (\phi )(0)&=\phi _{0}+\phi _{t}-\min _{0\le u\le t}\phi _{u}
 -\Bigl( \phi _{0}-\min _{0\le u\le t}\phi _{u}\Bigr) \\
 &=\phi _{t},\\
 \cn (\phi )(t)&=\phi _{t}+\phi _{0}-\min _{0\le u\le t}\phi _{u}
 -\Bigl( \phi _{t}-\min _{0\le u\le t}\phi _{u}\Bigr) \\
 &=\phi _{0},
\end{align*}
as claimed. 

\thetag{ii} Fix $\phi \in \ctd{t}{}$ below. We appeal to \lref{;mxpl}. Notice that 
the pair 
\begin{align*}
 (\phi _{t}-\phi _{0},\phi _{0}-\phi )\in \R \times \ctd{t}{}
\end{align*}
fulfills condition~\eqref{;mxbvc}; indeed, for the function 
$\phi _{0}-\phi $ is null at the origin, with the above choice of a pair, 
\eqref{;mxbvc} reads
\begin{align*}
 |\phi _{t}-\phi _{0}|\le \phi _{t}+\phi _{0}-2\min _{0\le u\le t}\phi _{u},
\end{align*}
which is the case because 
\begin{align*}
 2\min _{0\le u\le t}\phi _{u}&\le 2\min \{ \phi _{t},\phi _{0}\} \\
 &=\phi _{t}+\phi _{0}-|\phi _{t}-\phi _{0}|.
\end{align*}
Noting \thetag{i}, we see from \eqref{;xcn} that 
\begin{align*}
 (\cn \circ \cn )(\phi )
 &=-\cm _{\phi _{0}-\phi _{t}}\bigl( \phi _{t}-\cn (\phi )\bigr) +\phi _{0}\\
 &=-\cm _{\phi _{0}-\phi _{t}}\bigl( \cm _{\phi _{t}-\phi _{0}}(\phi _{0}-\phi )\bigr) +\phi _{0}\\
 &=-\cm _{\phi _{0}-\phi _{t}}(\phi _{0}-\phi )+\phi _{0}\\
 &=-(\phi _{0}-\phi )+\phi _{0},
\end{align*}
where we used \lref{;mxpl} for the third line and \eqref{;iden} for the fourth. 
Similarly, we have, by \eqref{;xcq}, 
\begin{align*}
 (\cq \circ \cn )(\phi )&=\cp \bigl( 
 \phi _{t}-\cn (\phi )
 \bigr) \\
 &=\cp \bigl( 
 \cm _{\phi _{t}-\phi _{0}}(\phi _{0}-\phi )
 \bigr) \\
 &=\cp (\phi _{0}-\phi ),
\end{align*}
which is $\cq (\phi )$ again by \eqref{;xcq}, where we used \lref{;mxpl} for the 
third line.

\thetag{iii} Fix $0\le s\le t$ arbitrarily. By the definition~\eqref{;ts} of the 
operation of time reversal and by the expression~\eqref{;cnexpr} of $\cn $,
\begin{align*}
 \overbrace{\cn (\phi )}^{\circ }(s)
 &=\cn (\phi )(t-s)\\
 &=\phi _{t-s}+\Bigl| \phi _{t}-\phi _{0}+
 \min _{0\le u\le t-s}\phi _{u}-\min _{t-s\le u\le t}\phi _{u}
 \Bigr| 
 -\Bigl| 
 \min _{0\le u\le t-s}\phi _{u}-\min _{t-s\le u\le t}\phi _{u}
 \Bigr| \\
 &=\rphi _{s}+\Bigl| \rphi _{0}-\rphi _{t}+
 \min _{s\le u\le t}\rphi _{u}-\min _{0\le u\le s}\rphi _{u}
 \Bigr| 
 -\Bigl| 
 \min _{s\le u\le t}\rphi _{u}-\min _{0\le u\le s}\rphi _{u}
 \Bigr| \\
 &=\cn \bigl( \rphi \bigr) (s)
\end{align*}
as expected.
\end{proof}

\appendix 
\section*{Appendix}
\renewcommand{\thesection}{A}
\setcounter{equation}{0}
\setcounter{thm}{0}

The purpose of this Appendix is to prove \lref{;mxpl}; two fundamental lemmas 
in calculus used in the proof are also appended with their proofs for the sake 
of the reader's convenience.

\subsection{Proof of \lref{;mxpl}}\label{;prfmxpl}

We appeal to Laplace's method along the same lines as in the proof of 
\cite[Proposition~3.3]{har24+}. For this purpose, we prepare extra two 
transformations $A$ and $Z$ on $C([0,\infty );\R )$ defined respectively by 
\begin{align*}
 A_{s}(\phi ):=\int _{0}^{s}e^{2\phi _{u}}\,du, &&  
 Z_{s}(\phi ):=e^{-\phi _{s}}A_{s}(\phi ),
\end{align*}
for $s\ge 0$ and $\phi \in C([0,\infty );\R )$. These two transformations 
are related via 
\begin{align}\label{;relaz}
 \frac{d}{ds}\frac{1}{A_{s}(\phi )}=-\frac{1}{\{Z_{s}(\phi )\}^{2}},\quad s>0,
\end{align}
for every $\phi \in C([0,\infty );\R )$. Keeping the same notation, we restrict 
$A$ to $\ctd{t}{}$, with which we define 
$\ctx :\ctd{t}{}\to \ctd{t}{}$ by 
\begin{equation}\label{;ctx}
\begin{split}
 &\ctx (\phi )(s)\\
 &:=\phi _{s}-\frac{1}{c}\log \left\{ 
 1+\frac{A_{s}(c\phi )}{A_{t}(c\phi )}\left( 
 e^{c\phi _{t}-cx}-1
 \right) 
 \right\} ,\quad 0\le s\le t,\ \phi \in \ctd{t}{},
\end{split}
\end{equation}
for each $c>0$ and $x\in \R $. Restricting $Z$ to $\ctd{t}{}$ as well, we have 

\begin{lem}\label{;ctxl}
For every $c>0$, $x\in \R $ and $\phi \in \ctd{t}{}$, it holds that 
\begin{align}\label{;ctxlq1}
 Z_{s}\bigl( c\ctx (\phi )\bigr) =Z_{s}(c\phi ),\quad 0\le s\le t.
\end{align}
Moreover, if the pair $(x,\phi )$ fulfills the condition~\eqref{;mxbvc} in \lref{;mxbv}, 
then, as $c\to \infty $,
\begin{align}\label{;ctxlq2}
 \max _{0\le u\le t}\bigl| 
 \ctx (\phi )(u)-\cm _{x}(\phi )(u)
 \bigr| \to 0.
\end{align}
\end{lem}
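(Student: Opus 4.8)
The plan is to treat the two assertions separately: the invariance \eqref{;ctxlq1} by an exact computation, and the convergence \eqref{;ctxlq2} by Laplace's method. For \eqref{;ctxlq1} I would first rewrite the definition \eqref{;ctx} as
\begin{align*}
 e^{c\ctx(\phi)(s)}=\frac{e^{c\phi_{s}}}{g(s)},\qquad
 g(s):=1+KA_{s}(c\phi),\quad K:=\frac{e^{c\phi_{t}-cx}-1}{A_{t}(c\phi)},
\end{align*}
so that everything reduces to the single identity $A_{s}(c\ctx(\phi))=A_{s}(c\phi)/g(s)$; granting this, the definitions of $A$ and $Z$ give $Z_{s}(c\ctx(\phi))=e^{-c\ctx(\phi)(s)}A_{s}(c\ctx(\phi))=g(s)e^{-c\phi_{s}}\cdot A_{s}(c\phi)/g(s)=Z_{s}(c\phi)$ at once. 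To prove the identity I would differentiate: since $\frac{d}{du}A_{u}(c\phi)=e^{2c\phi_{u}}$, we have $g'(u)=Ke^{2c\phi_{u}}$, whence
\begin{align*}
 A_{s}(c\ctx(\phi))=\int_{0}^{s}\frac{e^{2c\phi_{u}}}{g(u)^{2}}\,du
 =\frac1K\int_{0}^{s}\frac{g'(u)}{g(u)^{2}}\,du
 =\frac1K\left(1-\frac{1}{g(s)}\right)=\frac{A_{s}(c\phi)}{g(s)},
\end{align*}
using $g(0)=1$ and $g(s)-1=KA_{s}(c\phi)$. (Relation \eqref{;relaz} offers an equivalent route, rephrasing the identity as the statement that $1/A_{s}(c\ctx(\phi))-1/A_{s}(c\phi)=K$ is constant in $s$.) This step is routine and should present no difficulty.

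For the convergence \eqref{;ctxlq2} the plan is a Laplace-type analysis. Writing $I_{s}=A_{s}(c\phi)=\int_{0}^{s}e^{2c\phi_{u}}\,du$ and $J_{s}=A_{t}(c\phi)-I_{s}=\int_{s}^{t}e^{2c\phi_{u}}\,du$, the definition \eqref{;ctx} reads
\begin{align*}
 \ctx(\phi)(s)=\phi_{s}-\frac1c\log\bigl(I_{s}e^{c(\phi_{t}-x)}+J_{s}\bigr)+\frac1c\log(I_{s}+J_{s}).
\end{align*}
Standard Laplace asymptotics give, for fixed $s\in(0,t)$, that $\frac1{2c}\log I_{s}\to M_{s}:=\max_{0\le u\le s}\phi_{u}$, $\frac1{2c}\log J_{s}\to N_{s}:=\max_{s\le u\le t}\phi_{u}$, and $\frac1{2c}\log(I_{s}+J_{s})\to M:=\max_{0\le u\le t}\phi_{u}$ (note $M=\max\{M_{s},N_{s}\}$); since a $\frac1c\log$ of a sum of positive terms converges to the larger of the two exponential rates, I obtain the pointwise interior limit
\begin{align*}
 \ctx(\phi)(s)\to\phi_{s}-\max\bigl\{2(M_{s}-M)+\phi_{t}-x,\,2(N_{s}-M)\bigr\}.
\end{align*}
To identify the right-hand side with $\cm_{x}(\phi)(s)$ I would use $\max\{A,B\}=\tfrac12(A+B)+\tfrac12|A-B|$ together with $M_{s}+N_{s}-2\max\{M_{s},N_{s}\}=-|M_{s}-N_{s}|$; this collapses the limit exactly into the form \eqref{;cmx} defining $\cm_{x}$.

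The hard part will be upgrading this pointwise interior convergence to uniform convergence on all of $[0,t]$, and here condition \eqref{;mxbvc} is essential. The danger is a boundary layer: at $s=0$ one has $I_{0}=0$ and $\ctx(\phi)(0)=\phi_{0}$ exactly for every $c$, whereas the bulk formula above at $s=0$ reads $\phi_{0}-\max\{2\phi_{0}-2M+\phi_{t}-x,\,0\}$, and likewise $\ctx(\phi)(t)=x$ exactly while the bulk formula at $s=t$ need not equal $x$. Condition \eqref{;mxbvc}, equivalently $2\phi_{0}-\cp(\phi)(t)\le x\le\cp(\phi)(t)$, is precisely what forces these two maxima to reduce to their trivial branch, so that the limit function is continuous up to the endpoints with $\cm_{x}(\phi)(0)=\phi_{0}$ and $\cm_{x}(\phi)(t)=x$, consistently with \lref{;mxbv}. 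To make the convergence genuinely uniform I would replace the fixed-$s$ asymptotics by two-sided Laplace bounds with error controlled by the modulus of continuity of $\phi$ — the uniform upper bounds $I_{s}\le te^{2cM_{s}}$, $J_{s}\le te^{2cN_{s}}$ together with matching lower bounds obtained by integrating over a subinterval whose length is bounded below in terms of that modulus (this is the content of the auxiliary calculus lemmas) — which are uniform in both $s$ and $c$. Combined with the endpoint continuity supplied by \eqref{;mxbvc}, these yield \eqref{;ctxlq2}. I expect the bookkeeping of the one-sided lower Laplace bounds near the two endpoints, where $I_{s}$ or $J_{s}$ degenerates, to be the most delicate point.
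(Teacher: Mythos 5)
Your treatment of \eqref{;ctxlq1} is correct and complete, and is in substance the paper's own computation: your identity $A_{s}(c\ctx (\phi ))=A_{s}(c\phi )/g(s)$, equivalently that $1/A_{s}(c\ctx (\phi ))-1/A_{s}(c\phi )=K$ is constant in $s$, is exactly what the paper verifies before invoking \eqref{;relaz} (only note that your division by $K$ requires separating the degenerate case $\phi _{t}=x$, where $K=0$ and $\ctx (\phi )=\phi $ trivially). Likewise your pointwise analysis for \eqref{;ctxlq2} --- the rewriting of the argument of the logarithm, the ``$\frac{1}{c}\log$ of a sum goes to the larger rate'' step (which is precisely \lref{;lfund}), the algebraic identification of the limit with \eqref{;cmx}, and the observation that condition \eqref{;mxbvc} makes the bulk limit match the exact endpoint values $\ctx (\phi )(0)=\phi _{0}$, $\ctx (\phi )(t)=x$ --- reproduces the paper's argument.

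The genuine gap is the upgrade from pointwise to uniform convergence, which you leave as a plan and yourself flag as the delicate point. Your appeal to ``the auxiliary calculus lemmas'' misreads what they contain: the paper's appendix lemmas are Dini's second theorem (\lref{;tdini}) and the log-sum lemma (\lref{;lfund}); neither provides two-sided Laplace bounds with modulus-of-continuity control, so that machinery would have to be built from scratch. Moreover, the quantitative route as you describe it fails exactly where you fear: the lower bound $I_{s}\ge s\,e^{2c(M_{s}-\omega (s))}$ carries an error $\frac{1}{2c}\log s$ which, for each fixed $c$, blows up as $s\downarrow 0$ (symmetrically for $J_{s}$ near $s=t$), so bounds ``uniform in both $s$ and $c$'' of the advertised form do not exist near the endpoints. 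The idea you are missing, which makes the whole issue evaporate, is that for each fixed $c$ the difference $\ctx (\phi )(s)-\phi _{s}=-\frac{1}{c}\log \bigl( 1+KA_{s}(c\phi )\bigr) $ is monotone in $s$ --- trivially, since $A_{s}(c\phi )$ is increasing and the sign of $K$ is fixed --- while the limit $\cm _{x}(\phi )-\phi $ is continuous. Dini's second theorem (\lref{;tdini}) then converts the pointwise convergence you have already established on all of $[0,t]$ (interior points by Laplace asymptotics, endpoints exactly) into uniform convergence with no estimates whatsoever; this is how the paper concludes.
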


\begin{proof}
A direct computation shows that 
\begin{align*}
 \frac{1}{A_{s}\bigl( c\ctx (\phi )\bigr) }=\frac{1}{A_{s}(c\phi )}
 +\frac{e^{c\phi _{t}-cx}-1}{A_{t}(c\phi )}
\end{align*}
for $0<s\le t$. Differentiating both sides with respect to $s$ and noting 
relation~\eqref{;relaz}, we obtain \eqref{;ctxlq1}. As for the latter claim~\eqref{;ctxlq2}, 
notice that, for each $c>0$, the function $\ctx (\phi )-\phi $ is monotone 
(in fact, nonincreasing if $\phi _{t}-x\ge 0$ and increasing otherwise) 
and that the claimed limit function $\cm _{x}(\phi )-\phi $ is continuous. 
Therefore, in order to prove \eqref{;ctxlq2}, it suffices to check the pointwise 
convergence in virtue of Dini's second theorem; see 
\cite[pp.\ 81 and 270, Problem~127]{ps}, as well as \lref{;tdini} below. 
To this end, by the definition~\eqref{;ctx} of $\ctx $, 
\begin{align*}
 \ctx (\phi )(0)=\phi _{0},&& \ctx (\phi )(t)=x,
\end{align*}
which, by \lref{;mxbv}, agree with $\cm _{x}(\phi )(0)$ and $\cm _{x}(\phi )(t)$, 
respectively. Next we pick $s\in (0,t)$. In \eqref{;ctx}, rewrite 
\begin{align*}
1+\frac{A_{s}(c\phi )}{A_{t}(c\phi )}\left( 
 e^{c\phi _{t}-cx}-1
 \right) 
 =\frac{1}{A_{t}(c\phi )}\left( 
 e^{c\phi _{t}-cx}A_{s}(c\phi )+\int _{s}^{t}e^{2c\phi _{u}}\,du
 \right) ,
\end{align*}
and observe that, as $c\to \infty $, 
\begin{align*}
 \frac{1}{c}\log 
 \left\{ e^{c\phi _{t}-cx}A_{s}(c\phi )\right\}  
 \to  
 \phi _{t}-x+2\max _{0\le u\le s}\phi _{u}, && 
 \frac{1}{c}\log 
 \int _{s}^{t}e^{2c\phi _{u}}\,du
 \to 2\max _{s\le u\le t}\phi _{u},
\end{align*}
as well as $(1/c)\log A_{t}(c\phi )\to 2\max _{0\le u\le t}\phi _{u}$.
We combine these with \lref{;lfund} to get, as $c\to \infty $, 
\begin{align*}
 \ctx (\phi )(s)\to \phi _{s}-\max \Bigl\{ \phi _{t}-x+2\max _{0\le u\le s}\phi _{u},\,2\max _{s\le u\le t}\phi _{u}
 \Bigr\} +2\max _{0\le u\le t}\phi _{u}.
\end{align*}
The last expression is seen to coincide with $\cm _{x}(\phi )(s)$ because of the relations 
\begin{align*}
 2\max \{ a,b\} =a+b+|a-b|,\quad a,b\in \R , 
\end{align*}
and 
\begin{align*}
 \max _{0\le u\le t}\phi _{u}
 =\max \Bigl\{ 
 \max _{0\le u\le s}\phi _{u},\max _{s\le u\le t}\phi _{u}
 \Bigr\} .
\end{align*}
Consequently, \eqref{;ctxlq2} is proven and the proof of \lref{;ctxl} is completed.
\end{proof}

We are in a position to prove \lref{;mxpl}.

\begin{proof}[Proof of \lref{;mxpl}]
Pick a pair $(x,\phi )\in \R \times \ctd{t}{}$ fulfilling condition~\eqref{;mxbvc}. 
Since both sides of the claimed relation~\eqref{;mxplq1} are continuous at the 
origin, it suffices to show that 
\begin{align}\label{;mxplq1d}
 \cp (\cm _{x}(\phi ))(s)=\cp (\phi )(s)\quad \text{for all $0<s\le t$}.
\end{align}
Fix $s\in (0,t]$ below. By the definition of the transformation $Z$, we have 
the convergence 
\begin{equation}\label{;b}
 \begin{split}
  \frac{1}{c}\log Z_{s}(c\phi )&=\frac{1}{c}\log A_{s}(c\phi )-\phi _{s}\\
  &\xrightarrow[c\to \infty ]{}\cp (\phi )(s)
 \end{split}
\end{equation}
as to the right-hand side of \eqref{;ctxlq1}. To deal with its left-hand side, 
pick $\ve >0$ arbitrarily and take $c$ sufficiently large so that, by 
\eqref{;ctxlq2}, 
\begin{align*}
 \max _{0\le u\le t}\bigl| 
 \ctx (\phi )(u)-\cm _{x}(\phi )(u)
 \bigr| <\ve .
\end{align*}
Then, by the definition of $Z$, 
\begin{align*}
 e^{-2c\ve }e^{-c\ctx (\phi )(s)}A_{s}(c\cm _{x}(\phi ))
 \le Z_{s}\bigl( c\ctx (\phi )\bigr) \le 
 e^{2c\ve }e^{-c\ctx (\phi )(s)}A_{s}(c\cm _{x}(\phi )),
\end{align*}
from which it follows that 
\begin{align*}
 -2\ve +\cp (\cm _{x}(\phi ))(s)&\le 
 \liminf _{c\to \infty }\frac{1}{c}\log Z_{s}\bigl( c\ctx (\phi )\bigr) \\
 &\le \limsup _{c\to \infty }\frac{1}{c}\log Z_{s}\bigl( c\ctx (\phi )\bigr) 
 \le 2\ve +\cp (\cm _{x}(\phi ))(s).
\end{align*}
Thanks to the arbitrariness of $\ve $, we conclude that 
\begin{align*}
 \lim _{c\to \infty }\frac{1}{c}\log Z_{s}\bigl( c\ctx (\phi )\bigr) 
 =\cp (\cm _{x}(\phi ))(s).
\end{align*}
Combining this with \eqref{;b} and \eqref{;ctxlq1} leads to 
\eqref{;mxplq1d}, and hence to \eqref{;mxplq1}. The latter 
claim~\eqref{;mxplq2} is immediate from the former and 
\pref{;mxp}. 
\end{proof}

\subsection{Two fundamental lemmas in calculus}\label{;sstfl}

Given below are the statements and proofs of \lsref{;tdini} and \ref{;lfund} 
referred to in the proof of \lref{;ctxl}.

\begin{lem}\label{;tdini}
Let $I=[a,b]$ be a closed and bounded interval in $\R $ and $\{ f_{n}\} _{n=1}^{\infty }$ 
a sequence of real-valued nondecreasing functions on $I$ such that 
\begin{align*}
 \lim _{n\to \infty }f_{n}(s)=f(s)\quad \text{for all $s\in I$},
\end{align*}
with some continuous function $f$ on $I$. Then it holds that 
\begin{align*}
 \lim _{n\to \infty }\sup _{s\in I}\left| 
 f_{n}(s)-f(s)
 \right| =0.
\end{align*}
The same conclusion holds true if each $f_{n}$ is nonincreasing on $I$.
\end{lem}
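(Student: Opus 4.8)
The plan is to reduce the claim to a finite-partition argument that exploits monotonicity in the \emph{spatial} variable $s$ rather than in the index $n$; this is precisely the feature distinguishing the statement from the classical Dini theorem, where monotone convergence in $n$ together with continuity of each $f_{n}$ is assumed. First I would observe that the limit function $f$ is itself nondecreasing, being the pointwise limit of nondecreasing functions. Combined with the hypothesis that $f$ is continuous on the compact interval $I=[a,b]$, this makes $f$ uniformly continuous there. Hence, given $\ve >0$, I can choose a partition $a=x_{0}<x_{1}<\dots <x_{m}=b$ fine enough that $f(x_{j})-f(x_{j-1})<\ve$ for every $j=1,\dots ,m$.

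Next, invoking the pointwise convergence only at the finitely many partition points, I would pick $N$ so large that $|f_{n}(x_{j})-f(x_{j})|<\ve $ holds for all $n\ge N$ and all $j=0,1,\dots ,m$. The decisive step is then a sandwich estimate: for arbitrary $s\in I$ lying in some subinterval $[x_{j-1},x_{j}]$, monotonicity of $f_{n}$ gives $f_{n}(x_{j-1})\le f_{n}(s)\le f_{n}(x_{j})$, while monotonicity of $f$ gives $f(x_{j-1})\le f(s)\le f(x_{j})$. Chaining these with the partition-point bounds yields $f_{n}(s)-f(s)\le (f(x_{j})+\ve )-f(x_{j-1})<2\ve $ and, symmetrically, $f_{n}(s)-f(s)\ge (f(x_{j-1})-\ve )-f(x_{j})>-2\ve $, where I use $f(x_{j})-f(x_{j-1})<\ve $ in both. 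Thus $|f_{n}(s)-f(s)|<2\ve $ holds uniformly for $s\in I$ and $n\ge N$, which is exactly the asserted uniform convergence.

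Finally, the nonincreasing case follows immediately by applying the nondecreasing case to the sequence $\{ -f_{n}\} $, which is nondecreasing and converges pointwise to the continuous function $-f$. I expect the only point demanding attention to be the recognition that monotonicity must be used in the variable $s$ in order to transfer control from the finite set of partition points to all of $I$; once that mechanism is in place, the remainder is a routine $\ve $-estimate and presents no genuine obstacle.
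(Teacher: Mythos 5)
Your proof is correct and follows essentially the same argument as the paper: uniform continuity of $f$ to build a partition with small increments, pointwise convergence at the finitely many partition points, and the monotonicity of $f_{n}$ (and of $f$) in $s$ to sandwich $f_{n}(s)$ between values at adjacent partition points, yielding the uniform bound $2\ve$. The reduction of the nonincreasing case to the nondecreasing one via $-f_{n}$ is also the intended (implicit) treatment in the paper.
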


\begin{proof}
We deal with the case that each $f_{n}$ is nondecreasing so that the 
limit function $f$ is also nondecreasing on $I$. Fix $\ve >0$ arbitrarily. 
By the uniform continuity of $f$, we may divide $I$ into a 
finite number of subintervals $[s_{k-1},s_{k}],\,k=1,\ldots ,N$, so that 
\begin{align}\label{;qdini1}
 f(s_{k})-f(s_{k-1})<\ve ,\quad k=1,\ldots ,N.
\end{align}
By the assumption of pointwise convergence, we take 
$n$ sufficiently large so that 
\begin{align*}
 \max _{0\le k\le N}\left| 
 f_{n}(s_{k})-f(s_{k})
 \right| <\ve .
\end{align*}
For every $s\in I$, pick a subinterval $[s_{k-1},s_{k}]$ containing $s$. 
Then we have, by the assumption that $f_{n}$ is nondecreasing, 
\begin{align*}
 f(s_{k-1})-\ve <f_{n}(s_{k-1})\le f_{n}(s)\le f_{n}(s_{k})<f(s_{k})+\ve .
\end{align*}
Together with 
\begin{align*}
 f(s_{k})<f(s)+\ve && \text{and} && f(s)-\ve <f(s_{k-1})
\end{align*}
by \eqref{;qdini1}, we have $\left| f_{n}(s)-f(s)\right| <2\ve $. 
\end{proof}

\begin{lem}\label{;lfund}
Given $\al ,\beta \in \R $, let $\{ a_{n}\} _{n=1}^{\infty }$, $\{ b_{n}\} _{n=1}^{\infty }$ and 
$\{ c_{n}\} _{n=1}^{\infty }$ be three sequences of positive real numbers 
such that, as $n\to \infty $, 
\begin{align*}
 \frac{1}{c_{n}}\log a_{n}\to \al , &&  
 \frac{1}{c_{n}}\log b_{n}\to \beta , && 
 c_{n}\to \infty .
\end{align*}
Then it holds that 
\begin{align*}
 \lim _{n\to \infty }\frac{1}{c_{n}}\log (a_{n}+b_{n})=\max\{ \al ,\beta \}. 
\end{align*}
\end{lem}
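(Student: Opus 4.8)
The plan is to reduce the statement to a routine application of the squeeze theorem via the elementary two-sided bound
\begin{align*}
 \max \{ a_{n},b_{n}\} \le a_{n}+b_{n}\le 2\max \{ a_{n},b_{n}\} ,
\end{align*}
which is valid for every $n$ because $a_{n},b_{n}>0$. Applying the increasing map $x\mapsto (1/c_{n})\log x$ (legitimate since $c_{n}>0$) to each of the three terms yields
\begin{align*}
 \frac{1}{c_{n}}\log \max \{ a_{n},b_{n}\}
 \le \frac{1}{c_{n}}\log (a_{n}+b_{n})
 \le \frac{\log 2}{c_{n}}+\frac{1}{c_{n}}\log \max \{ a_{n},b_{n}\} .
\end{align*}

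First I would rewrite the common outer quantity using the fact that $\log $ is increasing, so that $\log \max \{ a_{n},b_{n}\} =\max \{ \log a_{n},\log b_{n}\} $ and hence, after dividing by $c_{n}>0$,
\begin{align*}
 \frac{1}{c_{n}}\log \max \{ a_{n},b_{n}\}
 =\max \Bigl\{ \frac{1}{c_{n}}\log a_{n},\,\frac{1}{c_{n}}\log b_{n}\Bigr\} .
\end{align*}
By hypothesis the two arguments converge to $\al $ and $\beta $ respectively, so by the continuity of the map $(\xi ,\eta )\mapsto \max \{ \xi ,\eta \} $ on $\R ^{2}$ this quantity converges to $\max \{ \al ,\beta \} $ as $n\to \infty $. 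Next I would observe that $(\log 2)/c_{n}\to 0$ because $c_{n}\to \infty $. Passing to the limit in the displayed sandwich and invoking the squeeze theorem then gives $\lim _{n\to \infty }(1/c_{n})\log (a_{n}+b_{n})=\max \{ \al ,\beta \} $, which is the claim.

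There is essentially no genuine obstacle here; the only place where the hypothesis $c_{n}\to \infty $ (rather than merely $c_{n}>0$) is used is in discarding the correction term $(\log 2)/c_{n}$ in the upper bound, which is precisely what lets the constant factor $2$ be absorbed in the limit. All remaining ingredients are the monotonicity of $\log $ and the continuity of $\max $.
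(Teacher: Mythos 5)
Your proof is correct, and it takes a genuinely different (and more streamlined) route than the paper's. The paper fixes $\ve >0$, reduces without loss of generality to the case $\al \ge \beta $, converts the hypotheses into two-sided exponential bounds $e^{(\al -\ve )c_{n}}<a_{n}<e^{(\al +\ve )c_{n}}$ (and similarly for $b_{n}$), and then controls the resulting correction term $\frac{1}{c_{n}}\log \{ 1+e^{-(\al -\beta )c_{n}}\} $ using $c_{n}\to \infty $, concluding via liminf/limsup bounds and the arbitrariness of $\ve $. You replace all of this with the purely algebraic sandwich $\max \{ a_{n},b_{n}\} \le a_{n}+b_{n}\le 2\max \{ a_{n},b_{n}\} $, the identity $\frac{1}{c_{n}}\log \max \{ a_{n},b_{n}\} =\max \bigl\{ \frac{1}{c_{n}}\log a_{n},\frac{1}{c_{n}}\log b_{n}\bigr\} $, and the continuity of $(\xi ,\eta )\mapsto \max \{ \xi ,\eta \} $; the hypothesis $c_{n}\to \infty $ enters only to discard the $(\log 2)/c_{n}$ term. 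What your packaging buys: no case distinction between $\al $ and $\beta $, no epsilon bookkeeping, and an argument that generalizes verbatim to sums of $k$ terms (with constant $\log k$). At bottom the two proofs rest on the same estimate --- your factor $2$ plays exactly the role of the paper's bound $1+e^{-(\al -\beta )c_{n}}\le 2$ --- but your formulation isolates it more cleanly.
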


\begin{proof}
It is sufficient to consider the case $\al \ge \beta $. Fix $\ve >0$ arbitrarily. 
By assumption, there exists a positive integer $N$ such that, for all $n\ge N$, 
\begin{align*}
 e^{(\al -\ve )c_{n}}<a_{n}<e^{(\al +\ve )c_{n}}, 
 && e^{(\beta -\ve )c_{n}}<b_{n}<e^{(\beta +\ve )c_{n}},
\end{align*}
from which it follows that 
\begin{align*}
 \al -\ve +\frac{1}{c_{n}}\log \left\{ 
 1+e^{-(\al -\beta )c_{n}}
 \right\} &<\frac{1}{c_{n}}\log (a_{n}+b_{n})\\
 &<\al +\ve +\frac{1}{c_{n}}\log \left\{ 
 1+e^{-(\al -\beta )c_{n}}
 \right\} .
\end{align*}
Therefore, by the assumption that $c_{n}\to \infty $ as $n\to \infty $, 
\begin{align*}
 \al -\ve &\le \liminf _{n\to \infty }\frac{1}{c_{n}}\log (a_{n}+b_{n})\\
 &\le \limsup _{n\to \infty }\frac{1}{c_{n}}\log (a_{n}+b_{n})\le \al +\ve ,
\end{align*}
which proves the claim since $\ve >0$ is arbitrary. 
\end{proof}



\begin{thebibliography}{99}

\bibitem{bp} J.~Bertoin, J.~Pitman, Path transformations connecting Brownian bridge, excursion and meander, Bull.\ Sci.\ Math.\ 2e S\'erie {\bf 118} (1994), 147--166.

\bibitem{bbo} P.~Biane, P.~Bougerol, N.~O'Connell, Littelmann paths and Brownian paths, Duke Math.\ J.\ {\bf 130} (2005), 127--167.

\bibitem{by} P.~Biane, M.~Yor, Valeurs principales associ\'ees aux temps locaux browniens, Bull.\ Sci.\ Math.\ 2e S\'erie {\bf 111} (1987), 23--101.

\bibitem{bs} A.N.~Borodin, P.~Salminen, Handbook of Brownian Motion -- Facts and Formulae, corrected reprint of 2nd ed., 2002, Birkh\"auser, Basel, 2015. 

\bibitem{bj} P.~Bougerol, T.~Jeulin, Paths in Weyl chambers and random matrices, Probab.\ Theory Related Fields {\bf 124} (2002), 517--543. 

\bibitem{cub} L.~Chaumont, G.~Uribe Bravo, Markovian bridges: weak continuity and pathwise constructions, Ann.\ Probab.\ {\bf 39} (2011), 609--647.

\bibitem{ckst} D.A.~Croydon, T.~Kato, M.~Sasada, S.~Tsujimoto, Dynamics of the Box-Ball System with Random Initial Conditions via Pitman's Transformation, Mem.\ Amer.\ Math.\ Soc.\ {\bf 283} (2023), no.\ 1398, vii+99 pp.

\bibitem{cs1} D.A.~Croydon, M.~Sasada, Duality between box-ball systems of finite box and/or carrier capacity, in: Stochastic Analysis on Large Scale Interacting Systems, 63--107, RIMS K\^oky\^uroku Bessatsu {\bf B79}, Res.\ Inst.\ Math.\  Sci.\ (RIMS), Kyoto, 2020.

\bibitem{cs2} D.A.~Croydon, M.~Sasada, Discrete integrable systems and Pitman's transformation, in: Stochastic Analysis, Random Fields and Integrable Probability\textemdash Fukuoka 2019, 381--402, Adv.\ Stud.\ Pure Math.\ {\bf 87}, Math.\  Soc.\ Japan, Tokyo, 2021.

\bibitem{cst1} D.A.~Croydon, M.~Sasada, S.~Tsujimoto, Dynamics of the ultra-discrete Toda lattice via Pitman's transformation, in: Mathematical structures of integrable systems and their applications, 235--250, RIMS K\^oky\^uroku Bessatsu {\bf B78}, Res.\ Inst.\ Math.\ Sci.\ (RIMS), Kyoto, 2020.

\bibitem{cst2} D.A.~Croydon, M.~Sasada, S.~Tsujimoto, Bi-infinite solutions for KdV- and Toda-type discrete integrable systems based on path encodings, Math.\ Phys.\ Anal.\ Geom.\ {\bf 25} (2022), no.~4, Paper No.~27, 71 pp.

\bibitem{har24+} Y.~Hariya, Invariance of Brownian motion associated with past and future maxima, arXiv:2303.09163v4 (to appear in Stochastics).

\bibitem{har24} Y.~Hariya, Invariance of Brownian motion associated with exponential functionals, Stochastic\ Process.\ Appl.\ {\bf 167} (2024), Paper No.~104235.

\bibitem{har25} Y.~Hariya, A Girsanov-type formula for a class of anticipative transforms of Brownian motion associated with exponential functionals, J.\ Theor.\ Probab.\ {\bf 38} (2025), Paper No.~11.

\bibitem{jeu} T.~Jeulin, Application de la th\'eorie du grossissement \`a l'\'etude des temps locaux browniens, in Grossissements de filtrations: exemples et applications, S\'eminaire de Calcul Stochastique 1982/83 Universit\'e Paris VI, 197--304, Lecture Notes in Math.\ {\bf 1118}, Springer, Berlin, 1985.

\bibitem{ks} I.~Karatzas, S.E.~Shreve, Brownian Motion and Stochastic Calculus, 2nd ed., Springer, New York, 1991.

\bibitem{kni} F.B.~Knight, Essentials of Brownian Motion and Diffusion, American Mathematical Society, Providence, RI, 1981.

\bibitem{my} R.~Mansuy, M.~Yor, Aspects of Brownian Motion, Springer, Berlin, 2008.

\bibitem{jwp} J.W.~Pitman, One-dimensional Brownian motion and the three-dimensional Bessel process, Adv.\ in Appl.\ Probab.\ {\bf 7} (1975), 511--526.

\bibitem{ps} G.~P\'olya, G.~Szeg\H{o}, Problems and Theorems in Analysis I, reprint of the 1978 edition, Springer, Berlin, 1998.

\bibitem{ry} D.~Revuz, M.~Yor, Continuous Martingales and Brownian Motion, 3rd ed., Springer, Berlin, 1999.

\bibitem{yz} M.~Yor, L.~Zambotti, A remark about the norm of a Brownian bridge, Statist.\ Probab.\ Lett.\ {\bf 68} (2004), 297--304. 

\bibitem{zam} L.~Zambotti, Random Obstacle Problems, Lecture Notes in Math.\ {\bf 2181}, Springer, Cham, 2017.

\end{thebibliography}
\end{document}